\let\hat\widehat
\newtheorem{theorem}{Theorem}
\newtheorem{lemma}[theorem]{Lemma}
\newtheorem{proposition}[theorem]{Proposition}
\newtheorem{theorem1}{Theorem}
\newtheorem{remark}[theorem1]{Remark}
\newtheorem{theorem2}{Theorem}
\newtheorem{example}[theorem2]{Example}
\newcommand\R{\mathbb{R}}
\newcommand\E{\mathbb{E}}
\newcommand\norm[1]{\|#1\|}
\newcommand\Haus{{\sf Haus}}
\DeclareMathOperator{\reach}{{\sf reach}}
\begin{document}

\begin{frontmatter}

\title{Solution manifold
and Its Statistical Applications
}
\runtitle{Solution manifold}


\author{\fnms{Yen-Chi} \snm{Chen}\ead[label=e1]{yenchic@uw.edu}}
\address{Department of Statistics\\University of Washington\\ \printead{e1}}
 \thankstext{e1}{Supported by NSF grant DMS - 195278 and DMS - 2112907 and NIH grant U24 AG072122} 

\runauthor{Chen}

\begin{abstract}
A solution manifold is the collection of points in a $d$-dimensional space satisfying a system of $s$ equations with $s<d$. 
Solution manifolds occur in several statistical problems including
missing data, 
algorithmic fairness,
hypothesis testing, 
partial identifications, and nonparametric set estimation. 
We  theoretically and algorithmically analyze solution manifolds. 
In terms of theory, we derive five useful results:
smoothness theorem,  stability theorem (which implies the consistency of a plug-in estimator),
convergence of a gradient flow,
local center manifold theorem
and convergence of the gradient descent algorithm.
We propose a Monte Carlo gradient descent algorithm to numerically approximate a solution manifold.
In the case of the likelihood inference, we design a manifold constraint maximization procedure to
find the maximum likelihood estimator on the manifold.
\end{abstract}


\begin{keyword}
\kwd{set estimation}
\kwd{gradient descent}
\kwd{nonconvex optimization}
\kwd{manifold}
\end{keyword}



\end{frontmatter}


\section{Introduction}

A solution manifold \cite{Rheinboldt1988}
is the collection of points in $d$-dimensional space
that solves a system of $s$ equations
where $s<d$. 
Namely, feasible set is a collection of points in an under-constrained system. 
Under smoothness conditions, the feasible set forms a manifold known as a solution manifold.

The solution manifold
occurs in many problems in statistics
such as missing data (Example 1), algorithmic fairness (Example 2), 
constrained likelihood space (Example 3), and density ridges/level sets (Example 4).
In the regular case that $s=d$, the solution manifold
reduces to the usual problems such as the Z-estimators \cite{vdv1998} or estimating equations \cite{liang1986longitudinal}. 
While there has been a tremendous amount of literature on 
the analysis of regular cases ($s=d$),
little is known 
when $s<d$. 
This study aims to analyze the problem when $s<d$
and design a practical algorithm to find the manifold.



Formally, 
let $\Psi:\R^d \mapsto \R^s$ be a vector-valued function with $s<d$. 
The solution set of $\Psi$ 
$$
M = \{x:\Psi(x) = 0\}
$$
is called the solution manifold and we call $\Psi$ the generator (function) of $M$.
Note that in some applications, $x$ represents the parameter in a model;
thus, sometimes we write $M = \{\theta: \Psi(\theta) = 0\}$.
Here we provide examples of solution manifolds from
various statistical problems.



\begin{figure}
\center
\includegraphics[height=1.2in]{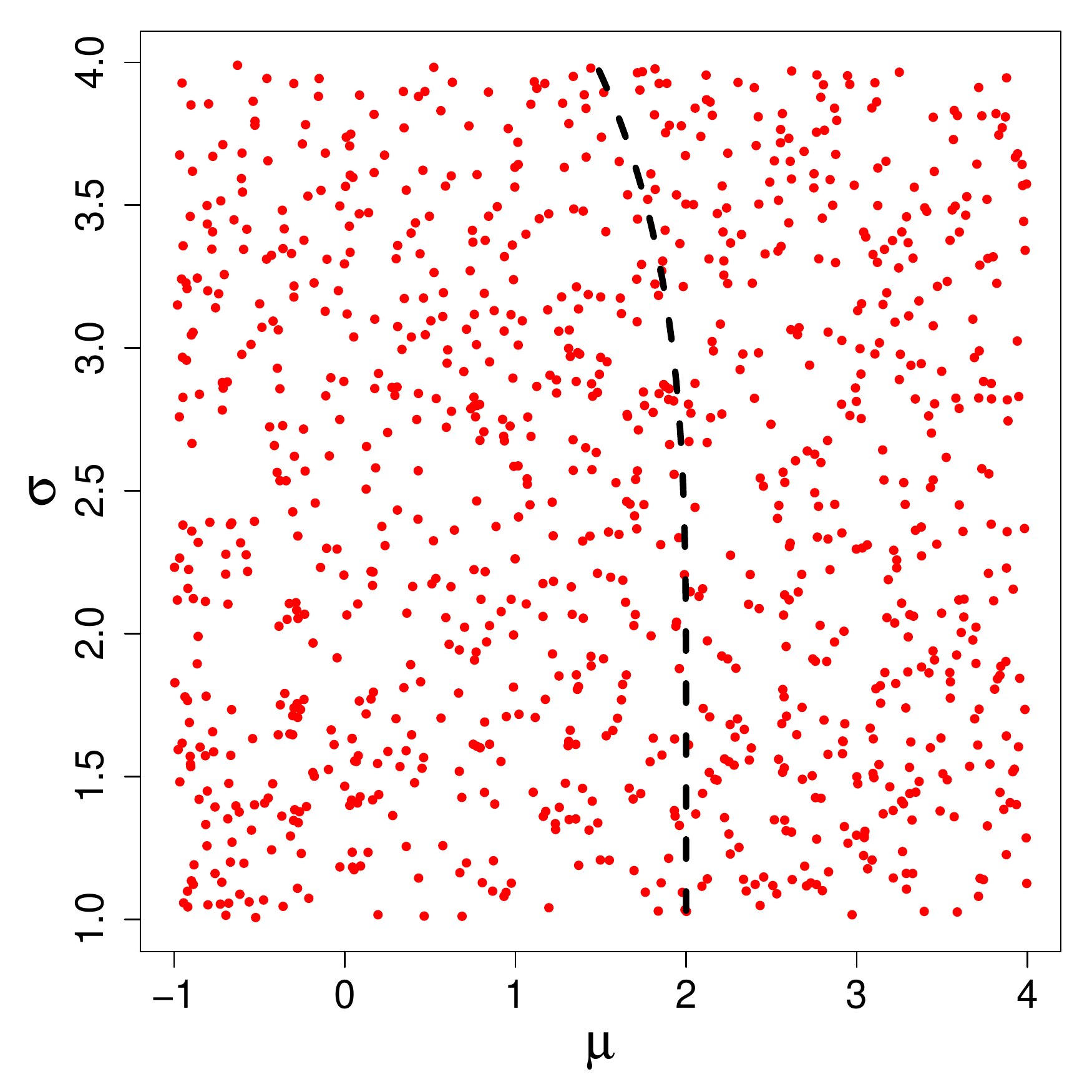}
\includegraphics[height=1.2in]{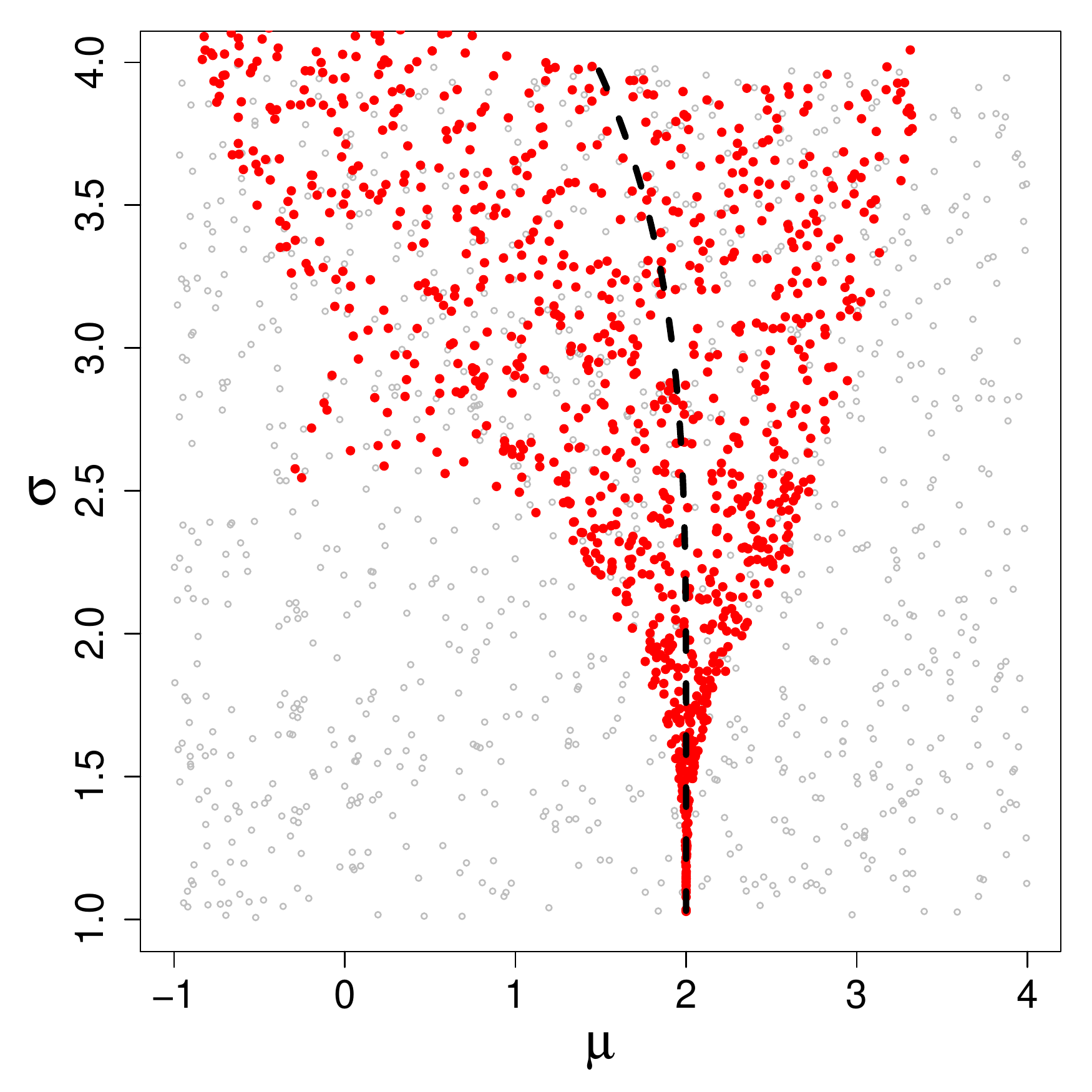}
\includegraphics[height=1.2in]{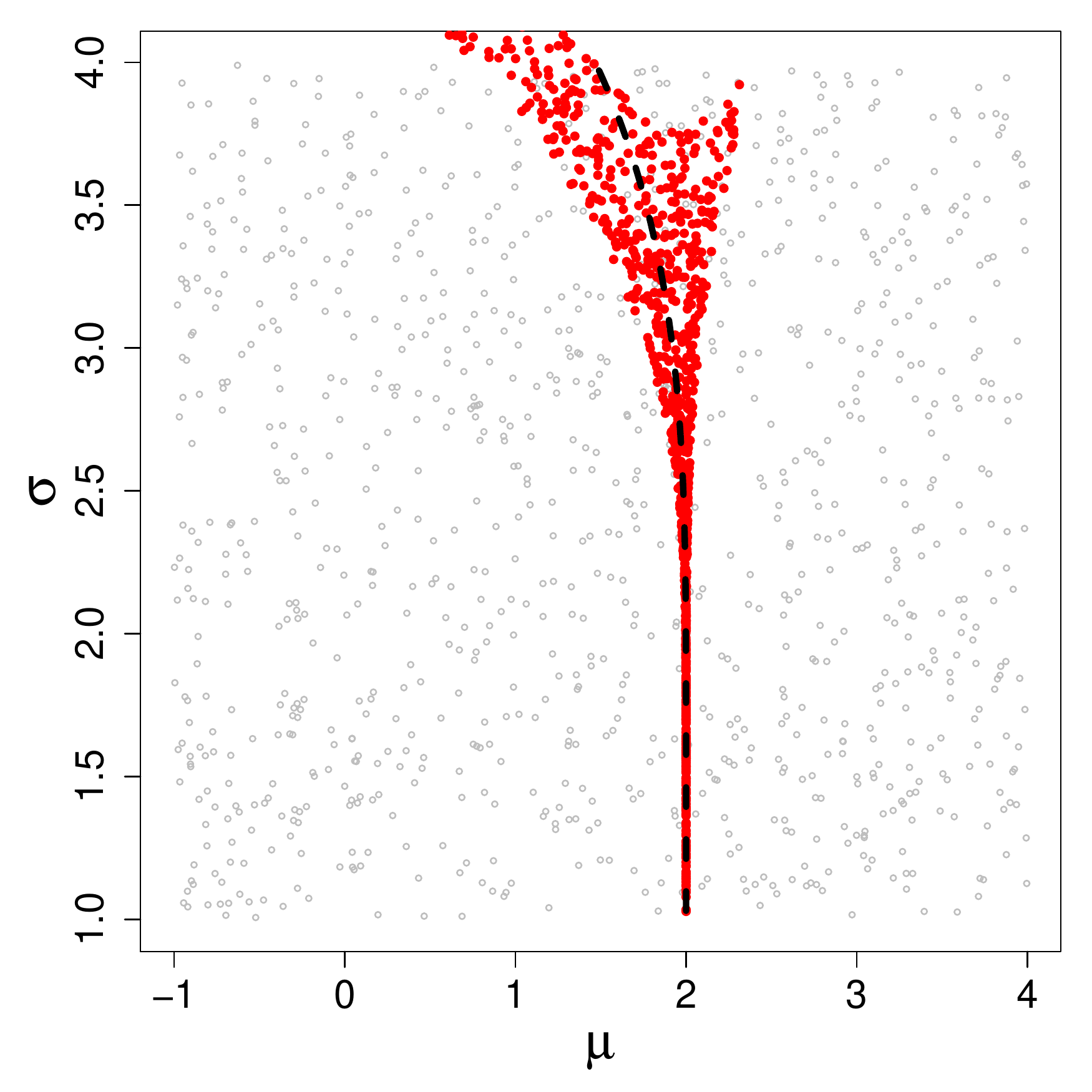}
\includegraphics[height=1.2in]{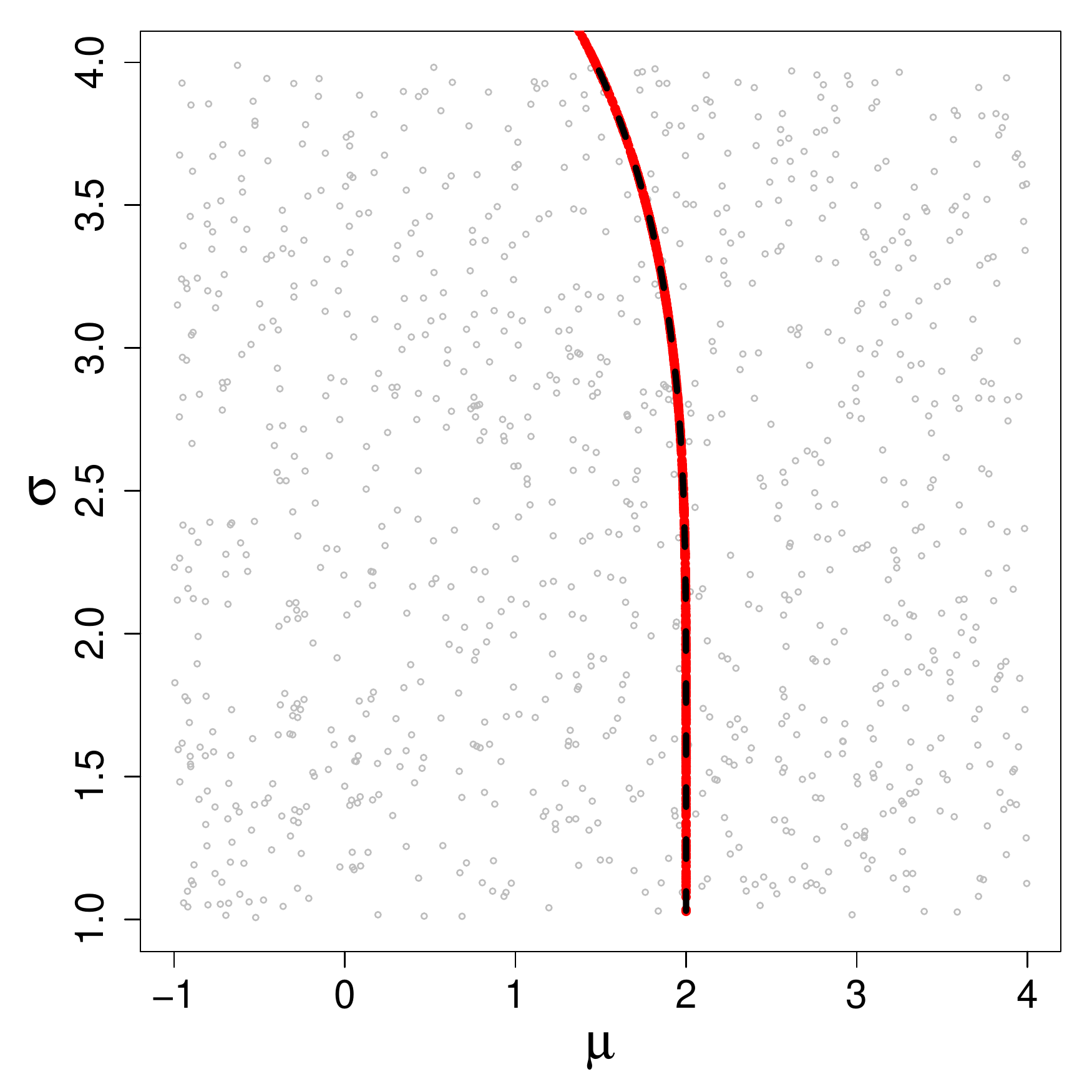}
\caption{An example of a solution manifold formed by the parameter $(\mu,\sigma)$
of a Gaussian
with a tail probability bound $P(-5<Y<2) = 0.5$.
The left panel shows $1000$ random initializations (uniformly distributed within $[1,3]\times [2,4]$). We keep applying the
gradient descent algorithm until convergence (right panel). The black dashed line indicates the actual location
of the solution manifold.
}
\label{fig::ex02}
\end{figure}

\begin{example}[Missing data]	\label{ex::missing}
Consider a simple missing data problem where we have a binary response variable $Y$
and a binary covariate $X$. The response variable is subject to missing. We use a binary variable $R$
to indicate the response pattern of $Y$ (i.e., $Y$ is observed if $R=1$).
Depending on the value of $R$, we may observe $(X,Y,R=1)$ or $(X,R=0)$. 
In this case, the entire distribution is characterized by the following parameters:
$$
\zeta_{x,y} = P(R=1|X=x,Y=y),\quad \mu_x = P(Y=1|X=x) ,\quad \xi =  P(X=x)
$$
for $x,y\in\{0,1\}$.
Parameter $\zeta_{x,y}$ is called missing data mechanism \cite{little2019statistical}.
$\mu_x$ is the regression function, and $\xi$ is the marginal mean of $X$.
Thus, this problem has seven parameters. 
From the observed data (IID elements in the form of $(X,Y,R=1)$ or $(X,R=0)$),
we can identify $P(x,y,R=1)$ and $P(x,R=0)$ for $x,y\in\{0,1\}$,
which leads to six constraints (note that $P(x,y,r) = P(X=x,Y=y,R=r)$):
\begin{equation}
\begin{aligned}
P(1,1,1) &= \zeta_{11}\mu_1\xi,\qquad\qquad P(1,0,1) = \zeta_{10}(1-\mu_1)\xi,\\
P(0,1,1) &= \zeta_{01}\mu_0(1-\xi),\qquad P(0,0,1) = \zeta_{00}(1-\mu_0)(1-\xi)\\
P(X=0,R=0)& = (1-\zeta_{01})\mu_0(1-\xi) + (1-\zeta_{00})(1-\mu_0) (1-\xi)\\
P(X=1,R=0)& = (1-\zeta_{11})\mu_1\xi + (1-\zeta_{10})(1-\mu_1) \xi.
\end{aligned}
\label{eq::missing::psi}
\end{equation}
Thus, the feasible values of  the parameters $(\zeta_{x,y}, \mu_x,\xi)$ will 
form a solution manifold
and the above constraints describe the generator $\Psi$.
Note that the resulting solution manifold is related to the nonparametric bound of the parameters \cite{manski1990nonparametric, manski1999identification, chen2018monte};
see Section~\ref{sec::missing}
for more discussions.

\end{example}

\begin{example}[Algorithmic fairness]	\label{ex::fair}
The algorithmic fairness is a trending topic in modern machine learning research \cite{hardt2016equality, 
chouldechova2017fair, corbett2017algorithmic}.
We consider a post-processing method in
the algorithmic fairness study. 
Suppose we have a binary response $Y\in\{0,1\}$,
a sensitive binary variable $A\in\{0,1\}$ that we wish to protect,
and an output from a trained classifier $W\in\{0,1\}$ (one can view it as $W= c(X,A)$, where $X$ is the covariate/feature
and $c$ is a trained classifier).
The sensitive variable is often the race or gender indicator. 
The classification result based on $W$ may discriminate against the sensitive variable $A$; that is, 
it is likely that $A=1$ and $W=1$ occur at the same time.
We want to construct a new `fair' classifier $Q\in\{0,1\}$  such that $Q$ is a random variable whose distribution depends only
on $A$ and $W$ and the output of $Q$
will not discriminate against $A$.
In other words, we design a new random variable $Q$ such that $Q\perp Y|A,W$.
While there are many principles of algorithmic fairness, we consider the \emph{test fairness} \cite{chouldechova2017fair}: We want to construct $Q$
such that 
\begin{equation}
P(Y=1|Q=s,A=0) = P(Y=1|Q=s, A=1),
\label{eq::fair}
\end{equation}
for each $s=0,1$.
To construct $Q$ that satisfies the above constraint,
we generate $Q$ based on $A,W$ such that its distribution is determined by parameter $q_{w,a} = P(Q=1|W=w,A=a)$.
As long as we can properly choose $q_{w,a}$,
the resulting $Q$ will satisfy equation \eqref{eq::fair}. 
In this case, 
any $q_{w,a}$ solving the following two equations will satisfy equation \eqref{eq::fair} (see Appendix \ref{sec::fair}):
\begin{equation}
\begin{aligned}
\frac{\sum_w q_{w,0}P(W=w,Y=1|A=0)}{\sum_{w'}q_{w',0}P(W=w'|A=0)} &= \frac{\sum_w q_{w,1}P(W=w,Y=1|A=1)}{\sum_{w'}q_{w',1}P(W=w'|A=1)}.\\
\frac{\sum_w (1-q_{w,0})P(W=w,Y=1|A=0)}{\sum_{w'}(1-q_{w',0})P(W=w'|A=0)} &= \frac{\sum_w (1-q_{w,1})P(W=w,Y=1|A=1)}{\sum_{w'}(1-q_{w',1})P(W=w'|A=1)}.
\end{aligned}
\label{eq::fair2}
\end{equation}
Note that $P(W=w,Y=y,A=a)$ is identifiable from the data.
The original parameter $\{q_{w,a}:w,a=0,1\}$ is in four-dimensional space
and we have two constraints, leading to a solution manifold of two dimensions.

\end{example}

\begin{example}[Constrained likelihood space]
Consider a random variable $Y$ from an unknown distribution.
We place a 
parametric model $p(y;\theta)$ of the underlying PDF of $Y$ where $\theta\in\R^d$
is the parameter vector. 
Suppose that we have a set of constraints on the model
such that a feasible parameter must satisfy
$$
f_1(\theta)= f_2(\theta) = \cdots = f_s(\theta) = 0
$$
for some given functions $f_1,\cdots, f_s$.
These functions may be from  independence assumptions or moment constraints $\E(g_j(Y)) = 0$ for some functions $g_1,\cdots, g_s$.
The set of parameters that satisfies these constraints is 
\begin{equation}
\Theta_0 = \left\{\theta: f_\ell(\theta)= 0, \ell=1,\cdots, s\right\} = \left\{\theta: \Psi(\theta) = 0 \right\},
\label{eq::moment1}
\end{equation}
which is a solution manifold with $\Psi_\ell(\theta) = \int f_\ell(y) p(y;\theta)dy$.
The above model is used in algebraic statistics \cite{drton2007algebraic,michalek2016exponential}, 
partial identification problems with equality constraints \cite{hansen1982large,chernozhukov2007estimation},
and
mixture models with moment constraints \cite{lindsay1995mixture, chauveau2013ecm}.
We will return to this problem in 
Section
\ref{sec::PI}.
Figure~\ref{fig::ex02} shows an example of a solution manifold formed by the tail probability constraint $P(-5<Y<2) = 0.5$
where $Y\sim N(\mu,\sigma^2)$. 
We have  two parameters $(\mu, \sigma^2)$ and one constraint; thus, the resulting solution set is a one-dimensional manifold.
From the left to the right panels, we show that we can recover the underlying manifold by random initializations 
with a suitable gradient descent process (Algorithm~\ref{alg::alg}). 
\end{example}
\begin{example}[Density ridges]
A $k$-ridge \cite{genovese2014nonparametric} of a density function $p(x)$ 
is defined as the collection of points satisfying
$$
\{x: V_k(x)^T\nabla p(x) = 0, \lambda_k(x)<0\},
$$
where $V_k(x) = [v_k(x),\cdots, v_{d}(x)]\in\R^{(k-d)}$ is the collection of eigenvectors of $H(x) = \nabla \nabla p(x)$,
and $\lambda_k(x)$ is the $k$-th eigenvector. The eigen-pairs are ordered as $\lambda_1(x)\geq \lambda_2(x)\geq \cdots \geq \lambda_d(x)$.
In this case $\Psi(x) = V_k(x)^T\nabla p(x)$;
hence, ridges are also solution manifolds.
In addition to ridges,  the level sets 
and critical points of a function \cite{Walther1997,mammen2013confidence,chacon2015population}
are examples of solution manifolds. 
We will discuss this in Section~\ref{sec::NP}
\end{example}

Although the aforementioned examples are from different statistical problems,
they all share a similar structure that
the feasible set forms a solution manifold.
Thus, we study properties of solution manifolds in this paper,
and our results will be applicable to all of these cases.

\emph{Main results.}
Our main results include theoretical developments and algorithmic innovations. 
In the theoretical analysis, we show that under a similar sets of assumptions,  we have the following:
\begin{enumerate}
\item {\bf Smoothness theorem.} 
The solution manifold
is a $(d-s)$-dimensional manifold with a positive reach (Lemma~\ref{lem::normal} and Theorem~\ref{thm::SM2}). 
\item {\bf Stability theorem.}  As long as $\hat \Psi$ and $\Psi$ and their derivatives are sufficiently close, $\hat M = \{x: \hat \Psi(x) = 0\}$
converges to $M$ under the Hausdorff distance (Theorem~\ref{thm::LU0}). 
\item {\bf Convergence of a gradient flow.} 
For the gradient descent flow of $\|\Psi(x)\|^2$, the flow converges (in the normal direction of $M$) to a point on $M$ when the starting point is sufficiently close to $M$
(Theorem~\ref{thm::GD}).

\item {\bf Local center manifold theorem.} 
The collection of points converging to the same location $z\in M$ forms an $s$-dimensional manifold (Theorem~\ref{thm::AMT}). 
\item {\bf Convergence of a gradient descent algorithm.} 
With a good initialization,
the gradient descent algorithm of $\|\Psi(x)\|^2$ converges linearly to a point in $M$ (Theorem~\ref{thm::alg})
when the step size is sufficiently small.

\end{enumerate}
We propose
three algorithms
to numerically find solution manifolds and use them to handle statistical problems:
%
\begin{enumerate}
\item {\bf Monte Carlo gradient descent algorithm:} an algorithm generating points over $M$
that requires only the access to $\Psi$ and its gradient (Section~\ref{sec::MCGD} and Algorithm~\ref{alg::alg}). 
\item {\bf Manifold-constraint maximizing algorithm: }an algorithm that finds the MLE on the solution manifold (Section~\ref{sec::LRT} and Algorithm~\ref{alg::alg::MLE}). 
\item {\bf Approximated manifold posterior algorithm: }a Bayesian procedure that approximates the posterior distribution on a manifold (Appendix~\ref{sec::bayesian} and Algorithm~\ref{alg::alg::AMP}).
\end{enumerate}

We would like to emphasize that while some of the theoretical results have appeared in the regular case ($s=d$, i.e.,
the solution manifold is a collection of points or just a single point), generalizing these
results to the manifold cases ($s<d$) requires non-trivial extensions of existing techniques.
The major challenge comes from the fact that the set $M$ contains an infinite number of points
and the geometry of $M$ will pose technical issues during the theoretical analysis. 
Also, although the stability theorem has appears for specific examples such as level set and ridge estimation 
\cite{Cadre2006,rinaldo2012stability, genovese2014nonparametric}, 
there is no such result for the general class of solution manifolds.
This paper provides a unified framework of analyzing solution manifolds.

The impact of this paper is beyond statistics. 
Our result provides a new analysis of the partial identification problem in econometrics \cite{hansen1982large,chernozhukov2007estimation}. 
The local center manifold theorem offers a new class of statistical problems
where the dynamical system interacts with statistics \cite{perko2013differential}.
The Monte Carlo approximation of a solution manifold leads to 
a point cloud over the manifold, which is a common scenario in computational geometry \cite{Cheng2005,cheng2005manifold,dey2006curve}. 
The algorithmic convergence of the gradient descent demonstrates a new class of non-convex functions
for that we still obtain the linear convergence \cite{boyd2004convex,nesterov2018lectures}. 

\emph{Outline.}
The remainder of this paper is organized as follows.
Section~\ref{sec::solM} provides a formal definition of a solution manifold and
studies the smoothness and stability of the manifold. 
Section~\ref{sec::MCGD} presents an algorithm for
approximating the solution manifold
and an analysis of its properties. 
Section~\ref{sec::app} discusses several statistical applications of solution manifolds.
Section~\ref{sec::discuss} provides future directions,
connections with other fields, and some manifolds in statistics that are not in a solution form.

\emph{Notations.}
Let $v\in\mathbb{R}^d$ be a vector and $V\in\mathbb{R}^{n\times m}$ be a matrix. 
$\norm{v}_2$ is the $L_2$ norm (Euclidean norm) of $v$, and 
$\norm{v}_{\max} = \max\{|v_1|,\cdots,|v_d|\}$ is the vector max norm.
For matrices, we use
$\norm{V} = \norm{V}_2 = \max_{\norm{u}=1, u\in \mathbb{R}^m}\frac{\norm{Vu}_2}{\norm{u}_2}$
as the $L_2$ norm 
and 
$\norm{V}_{\max} = \max_{i,j} \norm{V_{ij}}$
as the max norm.
For a squared matrix $A$, we define $\lambda_{\min}(A), \lambda_{\max}(A)$ to be the minimal and maximal eigenvalue respectively, and
$\lambda^2_{\min, >0}(A)$ as the smallest non-zero eigenvalue.
For a vector value function $\Psi$, we define a maximal norm of derivatives as 
$$
  \norm{\Psi}_{\infty}^{(J)} = \sup_x\max_i \max_{j_1}\cdots \max_{j_J} \left|\frac{\partial^J}{\partial x_{j_1}\cdots\partial x_{j_J}}\Psi_i(x)\right|
$$
for $J=0,1,2,3.$
When $\Psi$ is a scalar function, 
this is reduced to 
$$
  \norm{\Psi}_{\infty}^{(1)} = \sup_x \|\nabla\Psi(x)\|_{\max},\quad   \norm{\Psi}_{\infty}^{(2)} = \sup_x \|\nabla\nabla \Psi(x)\|_{\max},
$$
which
are the usual maximal norm of the gradient vector and Hessian matrix over all $x$.
We also define
\begin{align*}
\norm{\Psi}^*_{\infty, J} = \underset{j=0,\cdots,J}{\max} \norm{\Psi}^{(j)}_{\infty}
\end{align*}
as a norm that measures distance using upto the $J$-th derivative.
The Jacobian (gradient) of $\Psi(x)$ is an $s\times d$ matrix
$$
G_\Psi(x) = \nabla \Psi(x) = \begin{bmatrix}
\nabla \Psi_1(x)^T\\
\nabla \Psi_2(x)^T\\
\hdots\\
\nabla \Psi_s(x)^T
\end{bmatrix}\in \R^{s\times d}
$$
and the Hessian of $\Psi(x)$ will be an $s\times d\times d$ array 
$$
H_\Psi(x) = \nabla \nabla\Psi(x)\in \R^{s\times d\times d}, \quad [H_\Psi(x)]_{ijk} = \frac{\partial^2}{\partial x_j\partial x_k}\Psi_i(x)
$$
and third derivative of $\Psi$ will be an array 
$$
\nabla \nabla\nabla \Psi(x)\in \R^{s\times d\times d\times d}, \quad [\nabla \nabla\nabla \Psi(x)]_{ijk\ell} = \frac{\partial^3}{\partial x_j\partial x_k\partial x_\ell}\Psi_i(x).
$$
Let $A$ be a set and $x$ be a point.
We then define 
$$
d(x,A) = \inf\{\|x-y\|: y\in A\}
$$
as the projected distance from $x$ to $A$.
For a set $A$ and a positive number $r$, we denote $A\oplus r = \{x: d(x,A)\leq r\}$.

\section{Solution manifold and its geometry}	\label{sec::solM}

Let $\Psi:\R^d \mapsto \R^s$
be a vector-valued function and $M  = \{x: \Psi(x) = 0\}$ be the solution set/manifold.
When the Jacobian matrix $G_\Psi(x) = \nabla\Psi(x) $
has rank $s$ at every $x\in M$,
the set $M$ is an $(d-s)$-dimensional manifold locally at every point $x$
due to the implicit function theorem \cite{rudin1964principles}.
In algebraic statistics, the parameters in the solution set $\{x: \Psi(x)= 0 \}$
are called an implicit (statistical) algebraic model \cite{gibilisco2010algebraic}.

For a solution manifold,
its normal space can be characterized using the following lemma.
\begin{lemma}
For every point $x\in M$, the row space of $G_\Psi(x) \in \R^{s\times d}$ 
spans the normal space of $M$ at $x$. 
\label{lem::normal}
\end{lemma}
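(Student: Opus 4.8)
The plan is to reduce the statement to two standard facts: that the tangent space of $M$ at $x$ equals the kernel of the Jacobian $G_\Psi(x)$, and that the orthogonal complement of a kernel is the corresponding row space. Fix $x\in M$. Because $G_\Psi(x)$ has rank $s$ (a hypothesis already in force), the implicit function theorem, as noted above, makes $M$ a $(d-s)$-dimensional embedded manifold near $x$; thus $T_xM$ is a genuine $(d-s)$-dimensional linear subspace of $\R^d$ and, by definition, the normal space $N_xM$ is its orthogonal complement.

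First I would establish $T_xM\subseteq\ker G_\Psi(x)$. Given $v\in T_xM$, pick a $C^1$ curve $\gamma:(-\epsilon,\epsilon)\to M$ with $\gamma(0)=x$ and $\gamma'(0)=v$. Since $\Psi(\gamma(t))\equiv 0$, differentiating at $t=0$ via the chain rule gives $G_\Psi(x)\,v=0$, so $v\in\ker G_\Psi(x)$. Next I would upgrade this to an equality by a dimension count: rank–nullity gives $\dim\ker G_\Psi(x)=d-\operatorname{rank}G_\Psi(x)=d-s=\dim T_xM$, so the inclusion is in fact an equality, $T_xM=\ker G_\Psi(x)$.

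Finally, taking orthogonal complements and invoking the elementary identity $(\ker A)^\perp=\operatorname{range}(A^T)$ (the row space of $A$) for a real matrix $A$, I obtain $N_xM=(T_xM)^\perp=(\ker G_\Psi(x))^\perp=\operatorname{span}\{\nabla\Psi_1(x),\dots,\nabla\Psi_s(x)\}$, which is the row space of $G_\Psi(x)$ and hence exactly the claim.

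The only substantive obstacle is the identification $T_xM=\ker G_\Psi(x)$: one inclusion is immediate from the curve/chain-rule argument, but promoting it to an equality needs the manifold dimension $d-s$, hence the rank-$s$ condition and the implicit function theorem; everything else (rank–nullity, the orthogonal-complement identity, the chain rule) is routine. I would also note in passing the mild differentiability required of $\Psi$ for the chain rule and the implicit function theorem to apply, which is supplied by the standing smoothness assumptions used later in the paper.
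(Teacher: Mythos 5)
Your proof is correct and is precisely the standard argument (tangent space $=\ker G_\Psi(x)$ via chain rule plus rank--nullity, then take orthogonal complements) that the paper has in mind when it cites Garrity and omits the proof as elementary. No gaps; the rank-$s$ hypothesis and implicit function theorem are invoked exactly where they are needed.
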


Lemma~\ref{lem::normal} is an elementary result from geometry (see Section 6.5.1 of \cite{garrity2001all});
hence, we omit its proof.
Lemma~\ref{lem::normal} states that the Jacobian/gradient of  $\Psi$
is normal to the solution manifold. 
This is a natural result because the gradient of a function is always normal
to the level set and the solution manifold
can be viewed as the intersection of level sets of different functions. 

\subsection{Assumptions}	\label{sec::assumption}
We will make the following two major assumptions in this paper.
All the theoretical results rely on
these two assumptions.
\begin{description}
\item \textbf{(D-k)} $\Psi(x)$ is bounded $k$-times differentiable.
\item \textbf{(F)} There exists $\lambda_0,\delta_0,c_0>0, $ such that 
	\begin{itemize}
	\item[A.] $\lambda_{\min}(G_{\Psi}(x)G_{\Psi}(x)^T)\equiv \lambda_{\min,>0}(G_{\Psi}(x)^TG_{\Psi}(x)) >\lambda_0^2$ for all $x\in M\oplus \delta_0$ and
	\item[B.] $\norm{\Psi(x)}_{\max}>c_0$ for all $x\notin M\oplus \delta_0$.
	\end{itemize}
\end{description}

Assumption (D-k) is an ordinary smoothness of the generator function. 
It may be relaxed by a H{\" o}lder type condition on $\Psi(x)$.
Assumption (D-k) is weaker for a smaller integer $k$.
In the stability analysis, we need (D-1)  (i.e.,  bounded first order derivative) condition.
If we want to control the smoothness of the manifold, we need (D-2) or a higher-order condition. 
The stability of a gradient flow requires a (D-3) condition.
Assumption (F)
is a curvature assumption of $\Psi$ around the solution manifold.
Lemma~\ref{lem::normal} implies that the normal space of $M$
at each point is well-defined.
This assumption will reduce to  commonly assumed conditions
in the literature.
For instance, 
in the case of mode estimation (finding the local modes of a PDF $p(x)$), 
(F) reduces to the assumption that the local modes are well-defined as separated \cite{romano1988bootstrapping,romano1988weak}.
This is 
similar to the assumption that the PDF $p(x)$ is a Morse function \cite{chen2016comprehensive,jisu2016statistical}.
In the MLE theory, (F) refers to the Fisher's information matrix being positive definite at the MLE (and other local maxima),
which is often viewed as a classical condition in the MLE theory (see, e.g., Chapter 5 of \cite{vdv1998}).
In the problem of finding the density level sets (finding the set $\{x: p(x) =\lambda\}$),
this assumption is equivalent to assuming that $p(x)$ has a non-zero gradient  around the level set
\cite{molchanov1991empirical,Tsybakov1997, Molchanov1998, Cadre2006, mammen2013confidence,Laloe2012}.
The assumption (F) is critical to the that the set $M$ forms a manifold.
When there is no lower bound $\lambda_0$, i.e., the gradient $\lambda_{\min}(G_{\Psi}(x)G_{\Psi}(x)^T)$
attains $0$, the set $M$ may not form a manifold. 
One scenario that this occurs is the density level set $\{x: p(x) = \lambda\}$ such that
the density function $p(x)$ is flat at the level $\lambda$.

The constants in (F) can be further characterized by the following lemma.
\begin{lemma}
Assume (D-2)  and
that there exists $\lambda_M >0$ such that 
$$
(F')\qquad\inf_{x\in M}\lambda_{\min}(G_{\Psi}(x)G_{\Psi}(x)^T)\geq\lambda_M^2.
$$
Then the constants in
(F) can be chosen as
$$
\lambda_0 = \frac{1}{2}\lambda_{M},\quad 
\delta_0 =\frac{3\lambda^2_M}{8\|\Psi^*_{\infty, 1}\|\|\Psi^*_{\infty, 2}\|},\quad
c_0= \inf_{x\notin M\oplus \delta_0}\|\Psi\|_{\max}.
$$
\label{lem::relax}
\end{lemma}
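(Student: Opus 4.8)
The plan is to verify that the three constants given in the statement satisfy the two conditions in Assumption (F), using (D-2) and (F$'$). The choice of $c_0$ is immediate: it is \emph{defined} to be $\inf_{x\notin M\oplus\delta_0}\|\Psi\|_{\max}$, so condition (F).B holds trivially by construction (assuming this infimum is strictly positive, which is exactly the content of the definition being meaningful — note $\Psi(x)\ne 0$ off $M$ by definition of $M$). So the real work is condition (F).A: showing $\lambda_{\min}(G_\Psi(x)G_\Psi(x)^T) > \tfrac14\lambda_M^2$ for all $x\in M\oplus\delta_0$, where $\delta_0 = \tfrac{3\lambda_M^2}{8\|\Psi^*_{\infty,1}\|\|\Psi^*_{\infty,2}\|}$.

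First I would fix $x\in M\oplus\delta_0$ and let $x_0\in M$ be a nearest point, so $\|x-x_0\|\le\delta_0$ and $\lambda_{\min}(G_\Psi(x_0)G_\Psi(x_0)^T)\ge\lambda_M^2$ by (F$'$). The strategy is a perturbation bound: control $\|G_\Psi(x)-G_\Psi(x_0)\|$ via the second derivative bound, then convert this into a bound on how much $\lambda_{\min}(G_\Psi G_\Psi^T)$ can move. For the first step, (D-2) gives $\|G_\Psi(x)-G_\Psi(x_0)\| \le C\,\|\Psi\|^{(2)}_\infty\,\|x-x_0\|$ for a dimension-dependent constant $C$ (coming from passing between the entrywise max norm $\|\cdot\|_{\max}$ and the operator norm $\|\cdot\|_2$, and from the mean value theorem applied entrywise to $G_\Psi$). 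For the second step, write $A = G_\Psi(x_0)$, $B = G_\Psi(x)$, $E = B-A$, and use
\begin{align*}
\|BB^T - AA^T\| &= \|EB^T + AE^T\| \le \|E\|\,(\|A\| + \|B\|) \le \|E\|\,(2\|A\| + \|E\|).
\end{align*}
Since $\|A\| = \|G_\Psi(x_0)\| \le \sqrt{sd}\,\|\Psi\|^{(1)}_\infty$ is controlled by $\|\Psi^*_{\infty,1}\|$, and $\|E\|$ is small, Weyl's inequality then gives $\lambda_{\min}(BB^T) \ge \lambda_{\min}(AA^T) - \|BB^T-AA^T\| \ge \lambda_M^2 - (\text{small})$. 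One then checks that the constant $\delta_0$ has been chosen precisely so that this ``small'' term is at most $\tfrac34\lambda_M^2$, yielding $\lambda_{\min}(BB^T)\ge\tfrac14\lambda_M^2 = \lambda_0^2$, which is (F).A. Finally, the identity $\lambda_{\min}(G_\Psi G_\Psi^T) = \lambda_{\min,>0}(G_\Psi^T G_\Psi)$ used in (F).A is standard linear algebra (the nonzero eigenvalues of $MM^T$ and $M^TM$ coincide, and $G_\Psi$ has rank $s$ on a neighborhood of $M$ once $\lambda_{\min}(G_\Psi G_\Psi^T)>0$).

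The main obstacle — really the only nontrivial bookkeeping — is tracking the dimension-dependent constants ($\sqrt{s}$, $\sqrt{d}$, $\sqrt{sd}$, etc.) that appear when converting between the max norm in which $\|\Psi^*_{\infty,1}\|$ and $\|\Psi^*_{\infty,2}\|$ are expressed and the operator norm in which Weyl's inequality is naturally stated, and then confirming that the stated $\delta_0$ (with its particular constant $3/8$) actually absorbs all of them so the final bound comes out to exactly $\lambda_0 = \tfrac12\lambda_M$. If the authors' constants are stated in a normalization that suppresses these factors (e.g. treating $\|\Psi^*_{\infty,j}\|$ as already an operator-norm bound), the argument is a clean three-line perturbation estimate; I would follow whichever convention makes the displayed constants exact, and note that the qualitative conclusion — (F) holds on \emph{some} neighborhood with \emph{some} $\lambda_0$ — is robust to the precise constants in any case.
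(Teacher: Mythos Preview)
Your proposal is correct and follows essentially the same route as the paper: project $x$ to a nearest point $x_M\in M$, invoke Weyl's inequality, and control the perturbation of $G_\Psi G_\Psi^T$ by a mean-value/Taylor bound so that the minimal eigenvalue drops by at most $\tfrac{3}{4}\lambda_M^2$. The only cosmetic difference is bookkeeping: the paper applies Taylor directly to the \emph{product} $G_\Psi G_\Psi^T$ entrywise in the max norm, obtaining the clean bound $\|G_\Psi(x)G_\Psi(x)^T-G_\Psi(x_M)G_\Psi(x_M)^T\|_{\max}\le 2\|\Psi\|^*_{\infty,1}\|\Psi\|^*_{\infty,2}\,d(x,M)$ with no visible dimension factors, whereas you first bound $\|G_\Psi(x)-G_\Psi(x_M)\|$ and then use $BB^T-AA^T=EB^T+AE^T$ in operator norm --- which is why you (correctly) flag the norm-conversion constants as the main nuisance. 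The paper's choice of norm makes the displayed $3/8$ come out exactly; your argument gives the same qualitative conclusion and would match the constants once you work in $\|\cdot\|_{\max}$ as well.
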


Lemma~\ref{lem::relax} only places assumptions on 
the behavior of $\Psi$ and its derivatives on
the manifold $M$. 
(F') is the eigengap conditions for the row space of $G_{\Psi}(x)$. 
The assumption in Lemma~\ref{lem::relax} is very mild. 
When estimating the local modes of a function,
the assumption is the same as requiring that the Hessian matrix at local modes 
have all eigenvalues being positive.
The requirement $\inf_{x\in M}\lambda_{\min}(G_{\Psi}(x)G_{\Psi}(x)^T)\geq\lambda_M^2$
implies that
rows of $G_{\Psi}(x)$ are linearly independent for all $x$.


\subsection{Smoothness of Solution Manifolds}	\label{sec::SSM}

We introduce the concept of \emph{reach}~\cite{federer1959curvature,cuevas2009set, 
aamari2019estimating, aamari2019nonasymptotic} (also known as condition number in \cite{niyogi2008finding} and minimal feature size in~\cite{chazal2005lambda}) to describe the smoothness of a manifold.
The reach is the longest distance away from $M$, in which every point within this distance to $M$ has a unique projection onto $M$, that is,
\begin{equation}
\reach(M) = \sup\{r>0: \forall x\in M\oplus r, \mbox{ $x$ has a unique projection onto $M$}\}.
\end{equation}
The reach can be viewed as the largest radius of a ball that can freely move along the manifold $M$. See figure~\ref{Fig::reach} for an example. 
The reach has been used in nonparametric set estimation 
as a condition to guarantee the stability of a set estimator \cite{chen2015asymptotic,chen2016density,cuevas2009set}.

\begin{figure}
\centering
	\subfigure[]
	{
		\includegraphics[scale=0.5]{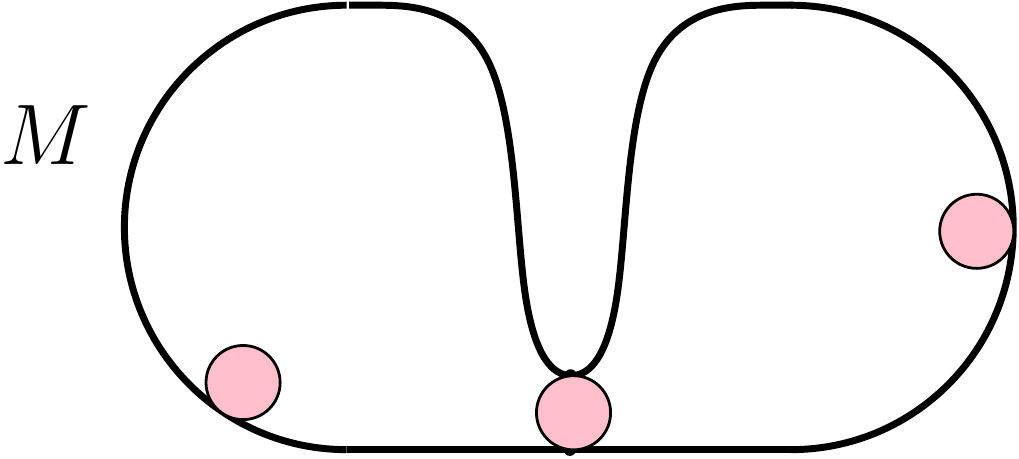}
	}
	\subfigure[]
	{
		\includegraphics[scale=0.5]{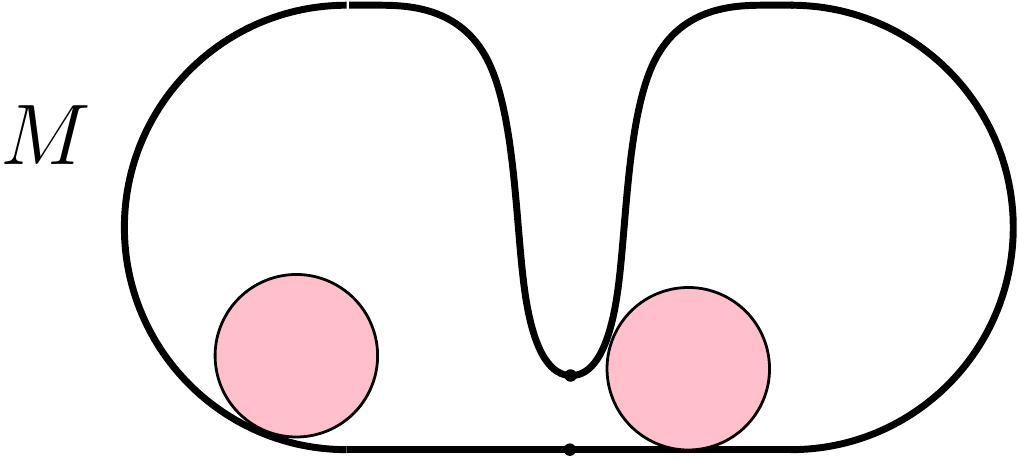} 
	}
\caption{An illustration for reach. Reach is the largest radius for a ball that can freely move along the manifold $M$ without penetrating any part of $M$. In (a), the radius of the pink ball is the same to the reach. In (b), the radius is too large; hence, it cannot pass the small gap on $M$.}
\label{Fig::reach}
\end{figure}

The smoothness of $\Psi$ does not suffice to guarantee the smoothness 
of a solution manifold. 
Consider the example of a density level set $\{x: p(x) = \lambda\}$
with a smooth density $p(x)$.
By construction, this level set is a solution manifold with $\Psi(x) = p(x) - \lambda$, a smooth function. 
Suppose that $p(x)$ has two modes and a saddle point $c$. 
If we choose $\lambda = p(c)$, the level set does not have a positive reach. 
See Figure \ref{Fig::Saddle} for an example.


\begin{figure}
\centering
\includegraphics[scale=0.5]{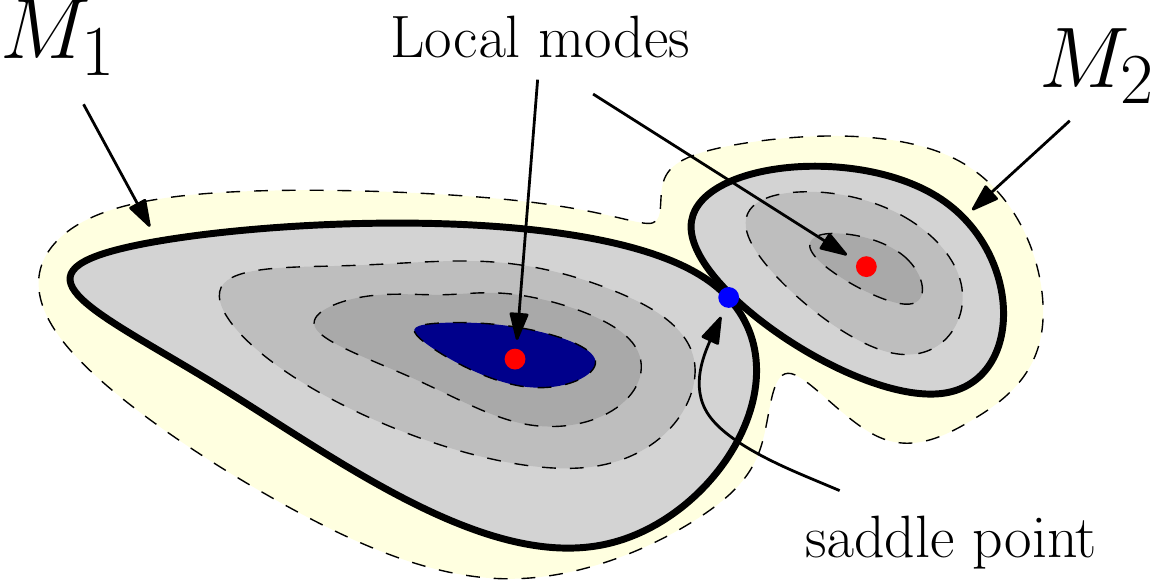}
\caption{An example of a smooth generator $\Psi$ but the resulting solution manifold $M=M_1\cup M_2$ may have $0$ reach. 
The dashed line is the contour lines for different levels.}
\label{Fig::Saddle}
\end{figure}
%


Although the smoothness of $\Psi$ is not enough
to guarantee a smooth $M$,
with an additional condition (F),
the solution manifold will be a smooth one as will be described in the following theorem.

\begin{theorem}	[Smoothness Theorem]
\label{thm::SM2} 
Conditions (D-2) and (F) imply that the reach of $M$ has a lower bound
$$
\reach(M) \geq \min \left\{\frac{\delta_0}{2},\frac{\lambda_0}{\norm{\Psi}^*_{\infty, 2}}\right\}.
$$
\end{theorem}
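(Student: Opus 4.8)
The plan is to bound the reach of $M$ from below by controlling two potential failure modes for the projection map: (i) a point $x$ near $M$ could fail to have a projection because the ``nearest point'' escapes to a far-away sheet of $M$, and (ii) even if the nearest point stays local, the curvature of $M$ could still create a collision of normals at short distance. Accordingly I would split the argument into a ``global separation'' part handled by assumption (F-B) and a ``local curvature'' part handled by (F-A) together with (D-2). The target bound $\reach(M)\geq \min\{\delta_0/2,\ \lambda_0/\norm{\Psi}^*_{\infty,2}\}$ already has this two-term shape, so each term should come out of one of these two parts.

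First I would dispose of the global part. Fix $r<\delta_0/2$ and take any $x\in M\oplus r$. I want to show the projection of $x$ onto $M$ exists and is attained at a point within the tube $M\oplus\delta_0$, so that (F-A) is available there. Since $d(x,M)\leq r<\delta_0/2$, any candidate nearest point $y$ must satisfy $\norm{x-y}\leq d(x,M)+\varepsilon$, hence lies within $\delta_0$ of $M$; combined with (F-B), which forces $\norm{\Psi}_{\max}>c_0$ outside $M\oplus\delta_0$, the minimization of $\norm{x-\cdot}$ over the closed set $M$ is effectively a minimization over $M\cap (x\oplus\delta_0)$, a closed bounded (hence compact) set, so a minimizer exists. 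This is the step where (F-B) and the $\delta_0/2$ term enter; the factor $1/2$ is exactly the slack needed so that points near but not on $M$ still have their nearest points safely inside the good tube.

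Next comes the local curvature part, which I expect to be the main obstacle. Here I would use the standard characterization: $\reach(M)\geq \rho$ iff for all $y,z\in M$,
\begin{equation*}
d\bigl(z,\ y+T_yM\bigr)\ \leq\ \frac{\norm{z-y}^2}{2\rho},
\end{equation*}
i.e. $M$ curves away from each tangent plane at a controlled quadratic rate. Fix $y\in M$ and let $z\in M$ be close to $y$. By Lemma~\ref{lem::normal}, the normal space at $y$ is the row space of $G_\Psi(y)$, so $d(z,y+T_yM)$ is the norm of the component of $z-y$ along that row space, which is $\norm{\Pi_y (z-y)}$ for the projection $\Pi_y$ onto $\mathrm{row}(G_\Psi(y))$. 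The idea is to estimate this using a Taylor expansion of $\Psi$ at $y$: since $\Psi(y)=\Psi(z)=0$, (D-2) gives
\begin{equation*}
0=\Psi(z)=\Psi(y)+G_\Psi(y)(z-y)+R,\qquad \norm{R}\leq \tfrac12\,\norm{\Psi}^*_{\infty,2}\,\norm{z-y}^2,
\end{equation*}
so $\norm{G_\Psi(y)(z-y)}\leq \tfrac12\norm{\Psi}^*_{\infty,2}\norm{z-y}^2$. Now (F-A) says the smallest nonzero singular value of $G_\Psi(y)$ is at least $\lambda_0$, and the kernel of $G_\Psi(y)$ is exactly $T_yM$, so $\norm{G_\Psi(y)v}\geq \lambda_0\,\norm{\Pi_y v}$ for every $v$. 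Applying this with $v=z-y$ yields $\norm{\Pi_y(z-y)}\leq \tfrac{1}{2\lambda_0}\norm{\Psi}^*_{\infty,2}\norm{z-y}^2$, which is precisely the quadratic bound with $\rho=\lambda_0/\norm{\Psi}^*_{\infty,2}$.

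Finally I would knit the two parts together: the global part guarantees the domain of the projection map contains $M\oplus(\delta_0/2)$ and that nearest points live where (F-A) holds, while the local part shows the uniqueness/quadratic-curvature condition holds with radius $\lambda_0/\norm{\Psi}^*_{\infty,2}$; hence $\reach(M)$ is at least the minimum of the two, as claimed. The subtle point I will need to be careful about is the exact equivalence I invoke between ``curves away quadratically at rate $1/(2\rho)$'' and ``reach $\geq\rho$'' — the clean iff statement requires $M$ to be a $C^{1,1}$ (or $C^2$) submanifold without boundary, which is why (D-2) and the constant-rank consequence of (F-A) are doing double duty here; if a purely global argument via unique projection is cleaner I would instead directly show that for $x$ within $\min\{\delta_0/2,\lambda_0/\norm{\Psi}^*_{\infty,2}\}$ of $M$, two distinct nearest points $y_1\neq y_2$ would each satisfy $x-y_i\perp T_{y_i}M$, and then derive a contradiction from the Taylor estimate above applied along the segment $y_1 y_2$.
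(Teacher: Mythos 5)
Your proposal is correct, and your primary route is a genuine (if closely related) reorganization of the paper's argument rather than a copy of it. Both proofs live off the same analytic core: a Taylor expansion of $\Psi$ between two points $y,z\in M$ using $\Psi(y)=\Psi(z)=0$, (D-2) to bound the remainder, and the lower singular-value bound from (F-A) together with Lemma~\ref{lem::normal} to convert a bound on $G_\Psi(y)(z-y)$ into a bound on the normal component of $z-y$. You package this as a global quadratic-curvature estimate $d(z,\,y+T_yM)\le \|z-y\|^2/(2\rho)$ with $\rho=\lambda_0/\norm{\Psi}^*_{\infty,2}$ and then invoke Federer's characterization of reach; the paper instead runs the equivalent contradiction argument in-line — take a point $x$ at distance $<r_0$ with two distinct projections $b,c$, apply the same Taylor and singular-value estimates, and deduce $\norm{b-c}^2\bigl(1-\norm{\Psi}^*_{\infty,2}\norm{t_b}_2\bigr)\le 0$ while $\norm{\Psi}^*_{\infty,2}\norm{t_b}_2<1$, forcing $b=c$. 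Your sketched alternative at the end is, word for word, that in-line contradiction, so you have both routes in hand. What your primary factoring buys you is a cleanly separated geometric lemma that is stated for all pairs $y,z\in M$, at the cost of needing the exact iff form of Federer's Theorem 4.17, which you rightly flag as the delicate external ingredient; what the paper's route buys is self-containedness (no appeal to the iff direction). One small further observation worth keeping in mind: the Taylor/singular-value estimate alone already yields $\reach(M)\ge \lambda_0/\norm{\Psi}^*_{\infty,2}$ for all pairs, so your ``global separation'' step and the $\delta_0/2$ term are not doing essential work here — $M$ is closed, so nearest points always exist, and (F-A) is only ever applied at points of $M$ itself, which lie trivially in $M\oplus\delta_0$. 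The $\min$ with $\delta_0/2$ in the theorem statement is a conservative cap rather than a quantity either argument actually needs; the paper's own proof also never invokes (F-B) or the $\delta_0/2$ bound in an essential way.
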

Theorem~\ref{Fig::Saddle} shows a lower bound on the reach
of a solution manifold. 
Essentially, it shows that as long as the generator function
is not flat around the solution manifold (assumption (F)), the resulting manifold
will be smooth. 
Note that for Theorem \ref{thm::SM2}, condition (D-2) 
can be relaxed to
a $2$-H{\"o}lder  condition
and the quantity $\norm{\Psi}^*_{\infty, 2}$ can be replaced by the corresponding H{\"o}lder's constant.

\begin{remark}
Reach is related to the curvature and a quantity called \emph{folding}~\cite{rice1967nonlinear}. 
Folding is defined as the smallest radius $r$ such that $B(x,r)\cap M$ 
is connected for each $x\in M$. 
The first quantity $\frac{\delta_0}{2}$ is related to the folding. 
The second quantity $\frac{\lambda_0}{K}$ is related to 
the curvature. When $s=1$, a similar result to Theorem~\ref{thm::SM2} appeared in \cite{Walther1997}.
Note that the reach is also related to the `rolling properties' and `$\alpha$-convexity'; see \cite{cuevas2009set} and Appendix A of \cite{pateiro2008set}.
\end{remark}

\subsection{Stability of Solution Manifolds}	\label{sec::MCT}
In this section, we show that when two generator functions are sufficiently close, the associated solution manifolds will be similar as well. 
We first start with a partial stability theorem, which holds under a weaker smoothness condition.

\begin{proposition}
Let $\Psi,\tilde{\Psi}:\mathbb{R}^{d}\mapsto \mathbb{R}^s$ be two vector-valued functions
and let
$$
M = \{x: \Psi(x) = 0\},\quad \tilde M = \{x: \tilde \Psi(x) = 0\}
$$
be the corresponding solution manifolds.
Moreover, let $\delta_0$ and $c_0$ be the constants in (F).
Assume that (D-1) holds within $M\oplus \delta_0$
and (F).
When $\norm{\tilde{\Psi}-\Psi}^*_{\infty, 0}<c_0$, 
we have
$$
\sup_{y\in \tilde M}d(y, M) = O\left(\|\tilde{\Psi}- \Psi\|^{(0)}_{\infty}\right).
$$

\label{prop::sta}
\end{proposition}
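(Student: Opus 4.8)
The plan is to show that every point $y \in \tilde M$ must lie close to $M$ by a quantitative argument using assumption (F.B). The key observation is that $\tilde\Psi(y) = 0$ for $y \in \tilde M$, so $\|\Psi(y)\|_{\max} = \|\Psi(y) - \tilde\Psi(y)\|_{\max} \leq \|\Psi - \tilde\Psi\|^{(0)}_\infty$. If $y \notin M \oplus \delta_0$, then (F.B) forces $\|\Psi(y)\|_{\max} > c_0$, which is impossible once $\|\Psi - \tilde\Psi\|^{(0)}_\infty < c_0$. Hence the hypothesis $\|\tilde\Psi - \Psi\|^*_{\infty,0} < c_0$ already guarantees $\tilde M \subseteq M \oplus \delta_0$; in particular $d(y,M) \leq \delta_0$ for all $y \in \tilde M$. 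This is the qualitative containment step, and it is easy.

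To upgrade the bound $\delta_0$ to an $O(\|\tilde\Psi - \Psi\|^{(0)}_\infty)$ rate, I would fix $y \in \tilde M$ and let $z$ be its projection onto $M$ (which exists, possibly non-uniquely, since $M$ is closed; uniqueness is not needed here), so $\|y - z\| = d(y,M) \leq \delta_0$. Now write $r = d(y,M)$ and consider the segment from $z$ to $y$. Since $y$ is a nearest point of $M$, the segment $[z,y]$ meets $M$ only at $z$, and I claim it is essentially normal to $M$ at $z$; more precisely, the unit vector $u = (y - z)/\|y-z\|$ lies in the normal space of $M$ at $z$, which by Lemma~\ref{lem::normal} is the row space of $G_\Psi(z)$. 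Then $\Psi(y) - \Psi(z) = \Psi(y)$ (as $\Psi(z) = 0$), and a first-order Taylor expansion along the segment gives
\begin{equation}
\Psi(y) = \int_0^{r} G_\Psi(z + t u)\, u \, dt,
\label{eq::sta::taylor}
\end{equation}
so that $\|G_\Psi(z) u\| \cdot r$ is, up to a correction of order $r^2$ controlled by $\|\Psi\|^{(2)}_\infty$ (which requires only (D-1) plus a Lipschitz gradient, or can be handled by a (D-1)-only mean-value estimate since the integrand is continuous), comparable to $\|\Psi(y)\| \leq \sqrt{s}\,\|\Psi - \tilde\Psi\|^{(0)}_\infty$. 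Because $u$ is in the row space of $G_\Psi(z)$ and $\lambda_{\min}(G_\Psi(z)G_\Psi(z)^T) > \lambda_0^2$ by (F.A) (valid since $z \in M \subseteq M \oplus \delta_0$), we get $\|G_\Psi(z) u\| \geq \lambda_0 \|u\| = \lambda_0$. Combining these, $\lambda_0 r \lesssim \|\Psi - \tilde\Psi\|^{(0)}_\infty + (\text{const})\, r^2$, and since $r \leq \delta_0$ is already small (shrink $\delta_0$ if necessary, or invoke Lemma~\ref{lem::relax}'s choice of $\delta_0$ so the quadratic term absorbs into the left side), we conclude $r = d(y,M) = O(\|\tilde\Psi - \Psi\|^{(0)}_\infty)$ uniformly in $y \in \tilde M$.

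The main obstacle is the claim that $u = (y-z)/\|y-z\|$ lies in the normal space of $M$ at $z$. This is the standard first-order condition for a nearest-point projection onto a set, but its validity requires $z$ to be a point where $M$ is a $C^1$ manifold with a well-defined tangent space — which is exactly what (D-2) and the smoothness theorem (Theorem~\ref{thm::SM2}) provide, though here we are trying to work under only (D-1). Under (D-1) alone one can instead argue more directly: the function $t \mapsto d(z + t(y-z), M)^2$ or simply $\|\Psi\|^2$ along the segment, together with the observation that $\|\Psi(y)\|$ is small, pins down $r$ without explicitly invoking normality, by using (F.A) as a local quadratic lower bound on $\|\Psi\|^2$ near $M$. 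I would take this second route to keep the smoothness hypotheses minimal, deferring the cleaner normal-space argument to the full stability theorem where (D-2) is assumed.
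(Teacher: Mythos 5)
Your containment step is exactly right and matches the paper: for $y\in\tilde M$ we have $\|\Psi(y)\|_{\max}=\|\Psi(y)-\tilde\Psi(y)\|_{\max}<c_0$, so (F.B) forces $\tilde M\subseteq M\oplus\delta_0$. The gap is in the quantitative step, and you have correctly diagnosed where it is but not closed it. Your primary route (project $y$ to $z\in M$, use normality of $y-z$ at $z$, expand $\Psi$ along the segment) requires comparing $G_\Psi(z+tu)$ with $G_\Psi(z)$: the lower bound $\|G_\Psi(\cdot)u\|\geq\lambda_0$ from (F.A) is only available at points where $u$ lies in the row space of the Jacobian, which holds at $z$ but not along the segment, so you must control $\|G_\Psi(z+tu)-G_\Psi(z)\|$. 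Making that error $O(t)$ uniformly over $z\in M$ needs a Lipschitz Jacobian, i.e.\ essentially (D-2); under (D-1) the Jacobian is merely bounded, with no modulus of continuity, so the ``mean-value estimate'' you allude to yields only $o(1)$ and no uniform rate. Your fallback --- invoking ``(F.A) as a local quadratic lower bound on $\|\Psi\|^2$ near $M$,'' i.e.\ $\|\Psi(x)\|\gtrsim\lambda_0\,d(x,M)$ --- is precisely the inequality the proposition asserts; (F.A) is a condition on the Jacobian and does not deliver this {\L}ojasiewicz-type bound without an argument, so as written the fallback is circular.

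The missing idea, which is the paper's argument, is to replace the straight segment by a curve adapted to $\Psi$: the unit-speed descent flow $\gamma_y$ of $h(x)=\|\Psi(x)\|_2$. Along that flow, $\frac{d}{dt}h(\gamma_y(t))=-\|\nabla h(\gamma_y(t))\|=-\|G_\Psi^T\Psi\|_2/\|\Psi\|_2\leq-\lambda_0$, because $\Psi^TG_\Psi G_\Psi^T\Psi\geq\lambda_{\min}(G_\Psi G_\Psi^T)\|\Psi\|_2^2$ and (F.A) applies \emph{pointwise at each point of the flow} --- no Taylor remainder and no comparison of Jacobians at different points is needed, which is exactly why (D-1) suffices. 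Since $h\geq0$ and $h(y)\leq\sqrt{d}\,\|\tilde\Psi-\Psi\|^{(0)}_\infty$, the flow reaches $M$ after arc length at most $h(y)/\lambda_0$, and arc length dominates $d(y,M)$, giving the uniform $O(\|\tilde\Psi-\Psi\|^{(0)}_\infty)$ bound. If you are willing to assume (D-2), your projection-plus-Taylor argument can be repaired and is a legitimate alternative, but it then proves a weaker (more heavily hypothesized) statement than the proposition as stated.
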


In the case of $s=d$, the conditions in Proposition \ref{prop::sta}
show many connections to the existing work. 
For instance, the convergence rate of estimating a mode (or a local mode)
is often based on a similar condition to (D-1); see, e.g., Theorem 2 of \cite{vieu1996note}. 
In the MLE and Z-estimation theory (see, e.g., Section~5.6 and 5.7 of \cite{vdv1998}), 
classical conditions often require the first order derivative of the score equation or estimating equations
to be uniformly bounded, which are similar to conditions of (D-1).
Moreover, in the MLE theory, we often need the derivative of the score function (or Fisher's information matrix
under appropriate conditions) to be non-singular within a small
neighborhood of the population maximizer.
This is exactly the same as condition (F). 
The constant $\lambda_0$ in (F) is the smallest absolute
eigenvalue of the derivative of the score function in the case of MLE problems.

Note that in the case of $s=d$, 
Proposition~\ref{prop::sta} is often enough for statistical consistency
because $M$ is a collection of disjoint points; thus, there is no need to consider the
smoothness of $M$. 
However, when $s<d$, the set $M$ contains infinite amount of points
and the smoothness of $M$ plays a role in terms of analyzing its
stability.
Therefore, we will need additional derivatives.

Before we formally state the stability theorem, we first introduce the concept of
the Hausdorff distance,
a common metric of sets.
The \emph{Hausdorff distance}
is defined as
\begin{align*}
\Haus(A,B) = \max\left\{\sup_{x\in B} d(x,A), \sup_{x\in A} d(x,B)\right\}.
\end{align*}
The Hausdorff distance is a distance between two sets and can be viewed as an $L_\infty$ type distance between sets.

\begin{theorem} [Stability Theorem]
\label{thm::LU0}
Let $\Psi,\tilde{\Psi}:\mathbb{R}^{d}\mapsto \mathbb{R}^s$ be two vector-valued functions
and let
$$
M = \{x: \Psi(x) = 0\},\quad \tilde M = \{x: \tilde \Psi(x) = 0\}
$$
be the corresponding solution manifolds.
Assume (D-2) and
(F) and that
$\tilde{\Psi}$ is bounded two-times differentiable.
When $\norm{\tilde{\Psi}-\Psi}^*_{\infty, 2}$ is sufficiently small,
\begin{description}
\item 1. (F) holds for $\tilde{\Psi}$.
\item 2. $\Haus(M,\tilde{M}) = O\left(\|\tilde{\Psi}- \Psi\|^{(0)}_{\infty}\right)$.
\item 3. 
$\reach(\tilde{M}) \geq \min\left\{\frac{\delta_0}{2},\frac{\lambda_0}{\|\Psi\|^*_{\infty,2}}\right\}+ O\left(\|\tilde{\Psi}- \Psi\|^*_{\infty,2}\right).$
\end{description}
\end{theorem}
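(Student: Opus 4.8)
### Proof Proposal for the Stability Theorem

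The plan is to reduce all three conclusions to quantitative consequences of the hypothesis $\norm{\tilde\Psi - \Psi}^*_{\infty,2}$ being small, leveraging the fact that the relevant quantities in (F) — the smallest nonzero eigenvalue of $G_\Psi G_\Psi^T$ and the lower bound on $\norm{\Psi}_{\max}$ away from $M$ — are themselves continuous (indeed Lipschitz) in $\Psi$ with respect to the $\norm{\cdot}^*_{\infty,2}$ norm. First I would establish item (1). Since $G_{\tilde\Psi}(x) = G_\Psi(x) + (G_{\tilde\Psi}-G_\Psi)(x)$ and $\norm{G_{\tilde\Psi}-G_\Psi} \le \sqrt{sd}\,\norm{\tilde\Psi-\Psi}^{(1)}_\infty$, a Weyl-type perturbation bound on eigenvalues of $G_{\tilde\Psi}G_{\tilde\Psi}^T$ gives that part A of (F) holds for $\tilde\Psi$ on a slightly shrunk tube $M\oplus\delta_0'$ with a slightly smaller constant $\lambda_0'$, once $\norm{\tilde\Psi-\Psi}^{(1)}_\infty$ is small enough; here I also need $M\oplus\delta_0' \subseteq \tilde M \oplus \delta_0$ type inclusions, which follow from item (2) applied in both directions. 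For part B of (F): on the complement of a tube around $M$ we have $\norm{\Psi}_{\max} > c_0$, hence $\norm{\tilde\Psi}_{\max} > c_0 - \norm{\tilde\Psi-\Psi}^{(0)}_\infty > 0$, and one then adjusts $\delta_0, c_0$ to the perturbed versions $\tilde\delta_0, \tilde c_0$; this is essentially the content already recorded in Lemma~\ref{lem::relax}, applied to $\tilde\Psi$.

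Next, item (2). One direction, $\sup_{y\in\tilde M} d(y,M) = O(\norm{\tilde\Psi-\Psi}^{(0)}_\infty)$, is exactly Proposition~\ref{prop::sta}, whose hypotheses ((D-1) near $M\oplus\delta_0$, (F), and $\norm{\tilde\Psi-\Psi}^{(0)}_\infty < c_0$) are all implied by the present assumptions. The reverse direction, $\sup_{x\in M} d(x,\tilde M) = O(\norm{\tilde\Psi-\Psi}^{(0)}_\infty)$, follows by applying Proposition~\ref{prop::sta} with the roles of $\Psi$ and $\tilde\Psi$ swapped; this is legitimate precisely because item (1) guarantees $\tilde\Psi$ satisfies (F) (with the perturbed constants), and the perturbation size $\norm{\Psi-\tilde\Psi}^{(0)}_\infty$ is the same. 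Taking the max of the two one-sided bounds yields $\Haus(M,\tilde M) = O(\norm{\tilde\Psi-\Psi}^{(0)}_\infty)$.

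For item (3), I would apply the Smoothness Theorem (Theorem~\ref{thm::SM2}) to $\tilde\Psi$, which is legitimate because item (1) shows $\tilde\Psi$ satisfies (D-2) and (F) with constants $\tilde\lambda_0, \tilde\delta_0$. This gives $\reach(\tilde M) \ge \min\{\tilde\delta_0/2, \tilde\lambda_0/\norm{\tilde\Psi}^*_{\infty,2}\}$. It then remains to compare $\tilde\lambda_0, \tilde\delta_0, \norm{\tilde\Psi}^*_{\infty,2}$ with their unperturbed counterparts. Using the explicit formulas from Lemma~\ref{lem::relax} — $\lambda_0 = \tfrac12\lambda_M$, $\delta_0 = \tfrac{3\lambda_M^2}{8\norm{\Psi^*_{\infty,1}}\norm{\Psi^*_{\infty,2}}}$ — together with the perturbation bound $|\lambda_M - \tilde\lambda_M| = O(\norm{\tilde\Psi-\Psi}^{(1)}_\infty)$ (again Weyl, on the eigenvalues of $G_\Psi G_\Psi^T$ along $M$, after transporting points between $M$ and $\tilde M$ via item (2) and controlling the resulting $O(\norm{\tilde\Psi-\Psi}^{(0)}_\infty \cdot \norm{\Psi}^{(2)}_\infty)$ drift) and $|\norm{\tilde\Psi}^*_{\infty,2} - \norm{\Psi}^*_{\infty,2}| \le \norm{\tilde\Psi-\Psi}^*_{\infty,2}$, one gets that each of the two terms inside the $\min$ is within $O(\norm{\tilde\Psi-\Psi}^*_{\infty,2})$ of the corresponding term in $\min\{\delta_0/2, \lambda_0/\norm{\Psi}^*_{\infty,2}\}$, and $\min$ of perturbed terms is within the same order of the $\min$ of the originals.

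The main obstacle I anticipate is the bookkeeping in item (1) and in the $\lambda_M$-perturbation step of item (3): one must carefully control how the eigenvalue lower bound degrades both from the direct perturbation of $G_\Psi$ and from having to evaluate $G_{\tilde\Psi}$ at points of $\tilde M$ rather than $M$ (a displacement of size $O(\norm{\tilde\Psi-\Psi}^{(0)}_\infty)$ by item (2), which interacts with the Hessian bound $\norm{\Psi}^{(2)}_\infty$). This circular-looking dependence — item (1) needing item (2), item (2)'s reverse direction needing item (1) — is resolved by noting that Proposition~\ref{prop::sta}'s one-sided bound $\sup_{y\in\tilde M}d(y,M) = O(\norm{\tilde\Psi-\Psi}^{(0)}_\infty)$ requires only (F) for $\Psi$ (not $\tilde\Psi$), so one proves that half of item (2) first, uses it to close item (1), then uses item (1) to get the other half of item (2). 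Everything else is routine Taylor expansion and eigenvalue perturbation.
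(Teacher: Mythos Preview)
Your proposal is correct and follows essentially the same approach as the paper: establish (F) for $\tilde\Psi$ via eigenvalue perturbation, obtain the Hausdorff bound by applying Proposition~\ref{prop::sta} in both directions (swapping the roles of $\Psi$ and $\tilde\Psi$ once item~1 is in hand), and deduce the reach bound by applying Theorem~\ref{thm::SM2} to $\tilde\Psi$ and comparing the constants. Your treatment of the apparent circularity between items~(1) and~(2) --- first proving the one-sided bound $\sup_{y\in\tilde M}d(y,M)$ using only (F) for $\Psi$, then closing item~(1), then the reverse inequality --- is in fact more careful than the paper's own argument, which simply asserts item~(1) directly without unpacking this dependence.
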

Theorem \ref{thm::LU0} shows that two similar generator functions have similar solution manifolds. 
Claim 2 is a geometric convergence property indicating that
a consistent generator function estimator implies a consistent manifold estimator.
The need of a second-order derivative comes from the constants in (F). 
These constants are associated with  the second-order derivatives through Lemma~\ref{lem::relax}.
Claim 3 is the convergence in smoothness, which implies that $\tilde{M}$ cannot be very wiggly
when $\tilde{\Psi}$ is sufficiently closed to $\Psi$.



\begin{example}[Missing data]
The stability theorem (Theorem~\ref{thm::LU0}) provides a simple approach for obtaining the convergence rate of an estimator.
Consider the missing data example in Example~\ref{ex::missing}. 
The `population' solution manifold is the parameters $\theta = (\zeta_{x,y}, \mu_x, \xi)$
$$
\Theta = \{\theta: \Psi(\theta)  = 0 \}\subset \R^7
$$
such that $\Psi(\theta)\in\R^6$ is based on equation \eqref{eq::missing::psi}.
When we observed random samples of size $n$ in the form of $(X_i,Y_i,R_i=1)$ or $(X_i,R_i=0),$
we derived estimators $\hat P(X=x,Y=y,R=1)$ and $\hat P(X=x,R=0)$
and constructed an estimated version $\hat \Psi_n(\theta)$ by replacing $P(x,y,r=1)$ and $P(x,r=0)$ in equation \eqref{eq::missing::psi}
with the estimated versions. 
This leads to an estimated solution manifold 
$$
\hat \Theta_n = \{\theta:\hat \Psi_n(\theta)  = 0  \}\subset \R^d. 
$$
The stability theorem (Theorem~\ref{thm::LU0}) bounds the distance between $\hat \Theta_n$ and $\Theta$
via the difference $$
\max\{|\hat P(x,y,r=1)- P(x,y,r=1)|, |\hat P(x,r=0) - P(x,r=0)|: x,y=0,1\}.
$$
\end{example}


\section{Monte Carlo approximation to solution manifolds}	\label{sec::MCGD}

Given $\Psi$ or its estimator/approximation $\hat \Psi$, 
numerically finding the solution manifold $M $ is a non-trivial task. 
In this section, we propose a simple gradient descent procedure
to find a point on $M$. 
Note that though we describe the algorithm using $\Psi$,
we will apply the algorithm to $\hat \Psi$ in practice.
$M$ is the solution set of $\Psi$; thus, we may rewrite it as 
\begin{equation}
\begin{aligned}
M &= \{x: \Psi(x) = 0\}= \{x: f(x)= 0\},\\
f(x)&=\Psi(x)^T \Lambda \Psi(x) ,
\end{aligned}
\label{eq::criterion}
\end{equation}
where $\Lambda$ is an $s\times s$ positive definite matrix.
Let $x$ be an initial point. 
Consider the gradient flow $\pi_{x}(t)$:
$$
\pi_{x}(0) = x, \qquad\pi_{x}' (t) = -\nabla f(\pi_{x}(t)).
$$
Points in $M$ are stationary points of the gradient system;
moreover, they are the minima of the function $f(x)$.
Thus, we can use a gradient descent approach to find points on $M$.
Algorithm~\ref{alg::alg} summarizes the gradient descent procedure for approximating $M$.
Note that we may choose $\Lambda = \mathbb{I}$ to be the identity matrix.
In this case, $f(x) = \|\Psi(x)\|^2$, so we will be investigating the gradient descent flow
of $\|\Psi(x)\|^2$.
For the case of $d=s,$ this
is a common method in  numerical analysis to find the solution set of non-linear equations; see, e.g.,
Section 6.5 of  \cite{dennis1996numerical}.

\begin{algorithm}[tb]
\caption{Monte Carlo gradient descent algorithm} 
\label{alg::alg}
\begin{algorithmic}
\State 1. Randomly choose an initial point $x_0\sim Q$, where $Q$ is a distribution over the region of interest $\mathbb{K}. $
\State 2. Iterates 
\begin{equation}
x_{t+1}\leftarrow x_t - \gamma \nabla f(x_t)
\label{eq::GD}
\end{equation}
until convergence. Let $x_{\infty}$ be the convergent point. 
\State 3. If $\Psi(x_{\infty}) = 0$ (or sufficiently small), we keep $x_{\infty}$; otherwise, discard $x_{\infty}$.
\State 4. Repeat the above procedure until we obtain enough points for approximating $M$.
\end{algorithmic}
\end{algorithm}

Algorithm~\ref{alg::alg} consists of three steps:
a random initialization step,
a gradient descent step,
and a rejection step. 
The random initialization 
step allows us to explore different parts
of the manifold. 
The gradient descent step
moves the initial points to possible candidates on $M$
by iterating the gradient descent.
The rejection step ensures that
points being kept are indeed on the solution manifold. 

Note that the random initialization and rejection steps are popular strategies
in numerical analysis. They serve as a remedy to resolve the problem of
local minimizers of $f$ that is not in the solution manifold $M$. 
See the discussion in page 152 of \cite{dennis1996numerical}.

Figure~\ref{fig::ex02} shows an example 
of finding the solution manifold 
$$
\{(\mu,\sigma): P(-5<Y<2) = 0.5, Y\sim N(\mu,\sigma^2)\}
$$
using random initializations (from a uniform distribution over $[1,3]\times [2,4]$) 
and the gradient descent (we will provide more details 
on the implementations later in Example~\ref{ex::Gaussian}). 
We recover the underlying $1$-dimensional manifold structure using Algorithm~\ref{alg::alg}.

\subsection{Analysis of the gradient flow}

When an initial point is given, we perform gradient descent to find a minimum. 
We analyze this process by starting with an analysis of
the (continuous-time) gradient flow $\pi_x(t)$. 
The gradient descent algorithm can be viewed as a discrete-time approximation to the
continuous-time gradient flow.
To analyze the convergence of the flow and the algorithm,
we denote
$\Lambda_{\max}$ and $\Lambda_{\min}$ as the largest and smallest eigenvalues of $\Lambda$, respectively.

%


\begin{lemma}
Assume (D-2) and (F). Let $G_f(x) = \nabla f(x)$ and $H_f(x) = \nabla\nabla f(x)$
and $G_\psi (x) = \nabla \Psi(x)\in\R^{s\times d}$.
Then we have the following properties:
\begin{enumerate}
\item For each $x\in M$, 
\begin{enumerate}
\item the non-zero eigenvectors of $H_f(x)$ span the normal space of $M$ at $x$. 
\item the minimal non-zero eigenvalue 
$$
\lambda_{\min,>0}(H_f(x))\geq \psi_{\min}^2(x) \equiv \lambda^2_{\min, >0}(G_\psi(x)^T G_\psi(x))\Lambda_{\min}\geq 2\lambda_0^2\Lambda_{\min} .
$$
\item the minimal eigenvalue in the normal space of $M$ at $x$ $\lambda_{\min,\perp}(H_f(x))= \lambda_{\min,>0}(H_f(x)) $.
\end{enumerate}
\item Suppose that $x$ has a unique projection $x_M \in M$
and let $N_M(x)$ be the normal space of $M$ at $x_M$. 
If 
$d(x, M)<\delta_c = \min\left\{\delta_0, \frac{\Lambda_{\min}}{8d\Lambda_{\max}}\frac{\lambda_0^2 }{\|\Psi\|^*_{\infty,2} \|\Psi\|^*_{\infty,3}}\right\}$,
then 
$$
\lambda_{\min,\perp, M}(H_f(x)) \equiv \min_{v\in N_M(x_M)} \frac{v^T H_f(x) v}{\|v\|^2} = 
\min_{v\in N_M(x_M)} \frac{\|H_f(x) v\|}{\|v\|} \geq \lambda_0^2\Lambda_{\min}.
$$

\end{enumerate}

\label{lem::property2}
\end{lemma}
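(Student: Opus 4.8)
The plan is to reduce Part~1 to linear algebra by first writing down the Hessian of $f$ explicitly, and to obtain Part~2 from a perturbation argument around the projection $x_M$. Since $\Lambda$ is symmetric, differentiating $f(x)=\Psi(x)^{T}\Lambda\Psi(x)$ twice gives $G_f(x)=2\,G_\Psi(x)^{T}\Lambda\,\Psi(x)$ and
\[
H_f(x)\;=\;\underbrace{2\,G_\Psi(x)^{T}\Lambda\,G_\Psi(x)}_{=:A(x)}\;+\;\underbrace{2\sum_{i=1}^{s}\bigl(\Lambda\Psi(x)\bigr)_i\,\nabla\nabla\Psi_i(x)}_{=:B(x)}.
\]
On $M$ we have $\Psi(x)=0$, so the residual term $B(x)$ vanishes and $H_f(x)=A(x)=2\,G_\Psi(x)^{T}\Lambda\,G_\Psi(x)$ is symmetric positive semidefinite; this is the only structural input Part~1 needs.

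For Part~1 I would argue as follows. Because $\Lambda\succ0$, $A(x)v=0\iff\Lambda^{1/2}G_\Psi(x)v=0\iff G_\Psi(x)v=0$, so $\ker A(x)=\ker G_\Psi(x)$ and $\mathrm{range}\,A(x)=(\ker G_\Psi(x))^{\perp}$ is the row space of $G_\Psi(x)$, which by Lemma~\ref{lem::normal} is the normal space of $M$ at $x$; since $A(x)$ is symmetric this range is spanned by its eigenvectors with non-zero eigenvalue, which is (a). For (b), for any unit $v$ in the row space of $G_\Psi(x)$ one has $v^{T}A(x)v=2(G_\Psi(x)v)^{T}\Lambda(G_\Psi(x)v)\ge 2\Lambda_{\min}\|G_\Psi(x)v\|^{2}\ge 2\Lambda_{\min}\lambda^{2}_{\min,>0}(G_\Psi(x)^{T}G_\Psi(x))$, so $\lambda_{\min,>0}(H_f(x))\ge\psi_{\min}^{2}(x)$, and $\psi_{\min}^{2}(x)\ge 2\lambda_0^{2}\Lambda_{\min}$ by the lower bound in (F) (with the constants supplied by Lemma~\ref{lem::relax}). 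For (c), $A(x)$ maps $\R^{d}$ into the normal space $N$ and annihilates $N^{\perp}$, hence is block diagonal for $N\oplus N^{\perp}$ with $A(x)|_N$ symmetric positive definite; so $\min_{v\in N}v^{T}A(x)v/\|v\|^{2}=\lambda_{\min}(A(x)|_N)=\lambda_{\min,>0}(A(x))$, and since $A(x)v\in N$ for $v\in N$ the same number equals $\min_{v\in N}\|A(x)v\|/\|v\|$ (for a symmetric positive definite operator the minimal Rayleigh quotient and the minimal value of $\|\cdot v\|/\|v\|$ coincide).

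For Part~2 I would fix $x$ with unique projection $x_M\in M$ and $d(x,M)<\delta_c\le\delta_0$, so the segment from $x_M$ to $x$ stays in $M\oplus\delta_0$ where (F) is in force. Writing $H_f(x)=H_f(x_M)+(H_f(x)-H_f(x_M))$, I would bound the increment in operator norm by mean-value estimates on $A$ and $B$: using $\|G_\Psi(x)-G_\Psi(x_M)\|\lesssim d\,\|\Psi\|^{*}_{\infty,2}\,d(x,M)$, $\|G_\Psi\|\lesssim\sqrt{sd}\,\|\Psi\|^{*}_{\infty,1}$, and $\|\Psi(x)\|=\|\Psi(x)-\Psi(x_M)\|\lesssim d\,\|\Psi\|^{*}_{\infty,1}\,d(x,M)$ for the $B$-term, the two parts together yield $\|H_f(x)-H_f(x_M)\|\le 8d\,\Lambda_{\max}\,\|\Psi\|^{*}_{\infty,2}\,\|\Psi\|^{*}_{\infty,3}\,d(x,M)$, which is $<\lambda_0^{2}\Lambda_{\min}$ precisely when $d(x,M)<\delta_c$, by the definition of $\delta_c$. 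Applying Part~1(c) at $x_M$ gives $v^{T}H_f(x_M)v\ge 2\lambda_0^{2}\Lambda_{\min}\|v\|^{2}$ and $\|H_f(x_M)v\|\ge 2\lambda_0^{2}\Lambda_{\min}\|v\|$ for every $v\in N_M(x_M)$; combining with the increment bound (Weyl's inequality for the Rayleigh form, the triangle inequality for the norm form) yields $v^{T}H_f(x)v\ge\lambda_0^{2}\Lambda_{\min}\|v\|^{2}$ and $\|H_f(x)v\|\ge\lambda_0^{2}\Lambda_{\min}\|v\|$ on $N_M(x_M)$, which is the asserted lower bound. The claimed identity between the two minima then follows because, up to this controlled perturbation, $H_f(x)$ acts on $N_M(x_M)$ as a symmetric positive definite operator: passing to the compression $P_{N}H_f(x)P_{N}$ onto $N_M(x_M)$ (with $P_N$ the orthogonal projection) leaves $\min_{v\in N_M(x_M)}v^{T}H_f(x)v/\|v\|^{2}$ unchanged, and for this compressed operator both minima equal its smallest eigenvalue.

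The routine part is Part~1: it is pure linear algebra once the Hessian formula is in hand. The hard part will be the bookkeeping in Part~2 --- carrying the dimension factors and the derivative norms $\|\Psi\|^{*}_{\infty,2},\|\Psi\|^{*}_{\infty,3}$ through the operator-norm estimate so that the threshold lands exactly on the stated $\delta_c$ --- together with the care needed for the ``$\|H_f(x)v\|/\|v\|$'' form, since for $x\notin M$ the Hessian $H_f(x)$ does not preserve $N_M(x_M)$ exactly, so one must either pass to the compression onto $N_M(x_M)$ or show that the tangential component of $H_f(x)v$ is of smaller order than $\lambda_0^{2}\Lambda_{\min}\|v\|$.
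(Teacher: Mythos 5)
Your proposal follows essentially the same route as the paper's proof: on $M$ the term $2\sum_i(\Lambda\Psi)_i\nabla\nabla\Psi_i$ vanishes so $H_f(x)=2G_\Psi(x)^{T}\Lambda G_\Psi(x)$, Part~1 then becomes linear algebra about the row space of $G_\Psi$ plus a Rayleigh-quotient bound through $\Lambda^{1/2}$, and Part~2 is a mean-value/operator-norm perturbation of $H_f(x_M)$ whose bookkeeping lands at the same threshold $\delta_c$.

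One point worth keeping: you have correctly flagged a subtlety the paper's own proof glosses over. The paper only establishes the lower bound for the Rayleigh form $\min_{v\in N}v^{T}H_f(x)v/\|v\|^{2}$ and never actually justifies the stated equality with $\min_{v\in N}\|H_f(x)v\|/\|v\|$. For $x\notin M$ the operator $H_f(x)$ need not map $N_M(x_M)$ into itself, so these two quantities are in general \emph{not} equal; what is true, and what is all that is used downstream (e.g.\ in the Taylor step of the proof of Theorem~\ref{thm::alg}), is that the same lower bound $\lambda_0^{2}\Lambda_{\min}$ holds for both. For the Rayleigh form this is exactly the perturbation argument you give; for the $\|H_f(x)v\|/\|v\|$ form the simplest fix is Cauchy--Schwarz, $\|H_f(x)v\|\ge v^{T}H_f(x)v/\|v\|\ge\lambda_0^{2}\Lambda_{\min}\|v\|$, or equivalently the triangle inequality $\|H_f(x)v\|\ge\|H_f(x_M)v\|-\|(H_f(x)-H_f(x_M))v\|$ combined with Part~1(c). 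Your alternative of compressing to $P_{N}H_f(x)P_{N}$ repairs the Rayleigh form but, as you yourself note, does not control $\|H_f(x)v\|$ directly; so the Cauchy--Schwarz (or triangle-inequality) route is the cleaner way to close that gap.
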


Property 1 in Lemma~\ref{lem::property2} describes the behavior of the Hessian $H_f(x)$ 
on the manifold. 
The eigenspace (corresponds to non-zero eigenvalues) is the same as the normal space of the manifold. 
With this insight, it is easy to understand property 1-(c)
that the minimal eigenvalue in the normal space is the same as the minimal non-zero eigenvalue. 
Property 2 is about the behavior of $H_f(x)$ around the manifold. 
The Hessian $H_f(x)$ is well-behaved as long as we are sufficiently close to $M$.
The following theorem characterizes several important properties of the gradient flow.

\begin{theorem}[Convergence of gradient flows]
Assume (D-3) and (F).
Let $\delta_c$ be defined in Lemma~\ref{lem::property2}.
Define $\pi_x(\infty) = \lim_{t\rightarrow\infty}\pi_x(t)$.
The gradient flow $\pi_x(t)$ satisfies the following properties:
\begin{enumerate}
\item 
(Convergence radius) For all $x\in M\oplus \delta_c$,
$
\pi_x(\infty) \in M.
$
\item (Terminal flow orientation) Let $v_x(t) = \frac{\pi_x'(t)}{\|\pi_x'(t)\|}$ be the orientation of the gradient flow.
	If $\pi_x(\infty)\in M$,
	then
	$v_x(\infty) = \lim_{t\rightarrow\infty} v_x(t)\perp M$ at $\pi_x(\infty)$. 
\end{enumerate}
\label{thm::GD}
\end{theorem}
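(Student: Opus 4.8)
My plan for both claims rests on the local Hessian structure of $f=\Psi^{T}\Lambda\Psi$ recorded in Lemma~\ref{lem::property2}, exploited once the flow has entered a neighbourhood of $M$. For Claim~1 I would track the squared distance $\rho(t)^{2}=d(\pi_x(t),M)^{2}$ and show it contracts exponentially; this both traps the flow in the good neighbourhood and forces it to converge to a single point of $M$. For Claim~2 I would linearise the flow around its limit point and read off the terminal orientation from $H_f$ evaluated there.

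\textbf{Claim 1.} First note $f\ge 0$ vanishes exactly on $M$ and $\frac{d}{dt}f(\pi_x(t))=-\|\nabla f(\pi_x(t))\|^{2}\le 0$, so the flow stays in the sublevel set it starts in; it is globally defined because (D-3) makes $\nabla f$ bounded. Shrinking $\delta_c$ if necessary so that $\delta_c<\reach(M)$ --- legitimate since $\reach(M)>0$ by Theorem~\ref{thm::SM2} --- the map $x\mapsto d(x,M)^{2}$ is differentiable on $M\oplus\delta_c$ with gradient $2(x-\pi_M(x))$, $\pi_M$ the unique projection. A standard continuity argument keeps $\pi_x(t)$ in $M\oplus\delta_c$, and while it holds,
$$
\frac{d}{dt}\rho(t)^{2}=-2\big(\pi_x(t)-\pi_M(\pi_x(t))\big)^{T}\nabla f(\pi_x(t)).
$$
Expanding $\nabla f$ about the foot point $\pi_M(\pi_x(t))\in M$, where it vanishes, writes $\nabla f(\pi_x(t))=\bar H\,(\pi_x(t)-\pi_M(\pi_x(t)))$ with $\bar H=\int_0^1 H_f(\pi_M(\pi_x(t))+u(\pi_x(t)-\pi_M(\pi_x(t))))\,du$. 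Because the displacement $\pi_x(t)-\pi_M(\pi_x(t))$ is normal to $M$ and every point of the segment shares the foot point $\pi_M(\pi_x(t))$ and lies within $\delta_c$ of $M$, Property~2 of Lemma~\ref{lem::property2} applies to each integrand and bounds the quadratic form from below by $\lambda_0^{2}\Lambda_{\min}\rho(t)^{2}$. Hence $\frac{d}{dt}\rho(t)^{2}\le-2\lambda_0^{2}\Lambda_{\min}\rho(t)^{2}$, so $\rho(t)\le\rho(0)e^{-\lambda_0^{2}\Lambda_{\min}t}\to 0$, which closes the continuity argument. Finally $\|\pi_x'(t)\|=\|\nabla f(\pi_x(t))\|\le C\rho(t)$ with $C$ depending only on $\Lambda$ and the derivatives of $\Psi$, so $\int_0^{\infty}\|\pi_x'(t)\|\,dt<\infty$; the flow is Cauchy, $\pi_x(\infty)$ exists, and $d(\pi_x(\infty),M)=\lim_t\rho(t)=0$.

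\textbf{Claim 2.} Write $z:=\pi_x(\infty)\in M$, $y(t)=\pi_x(t)$, $y_M(t)=\pi_M(y(t))$, so $y(t),y_M(t)\to z$. From $\pi_x'(t)=-\bar H\,(y(t)-y_M(t))$ and $\bar H\to H_f(z)$ by continuity (using (D-3)), we get $\pi_x'(t)=-H_f(z)\,(y(t)-y_M(t))+o(\rho(t))$. The displacement $y(t)-y_M(t)$ is normal to $M$ at $y_M(t)$, and $N_M(y_M(t))\to N_M(z)$ with tilt $O(\rho(t))$ because $\reach(M)>0$; by Property~1 of Lemma~\ref{lem::property2}, $H_f(z)$ preserves $N_M(z)$ and satisfies $H_f(z)|_{N_M(z)}\succeq 2\lambda_0^{2}\Lambda_{\min}I$. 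Decomposing the displacement into its $N_M(z)$-component $w(t)$ and the $o(\rho(t))$ tilt gives $\pi_x'(t)=-H_f(z)|_{N_M(z)}w(t)+o(\rho(t))$ with $\|H_f(z)|_{N_M(z)}w(t)\|\asymp\rho(t)$, so the tangential component of $v_x(t)=\pi_x'(t)/\|\pi_x'(t)\|$ is $o(1)$ and every subsequential limit of $v_x(t)$ is normal to $M$ at $z$. To obtain an actual limit, I would note that $\hat w(t)=w(t)/\|w(t)\|$ satisfies, up to an exponentially small (hence integrable) perturbation, the negative gradient flow on the unit sphere of $N_M(z)$ of the Rayleigh quotient of the fixed symmetric positive-definite operator $H_f(z)|_{N_M(z)}$, which converges to an eigenvector; therefore $v_x(t)$ converges and $v_x(\infty)\perp M$ at $\pi_x(\infty)$.

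\textbf{Main obstacle.} The differential inequality for $\rho(t)^{2}$ is immediate from Property~2 of Lemma~\ref{lem::property2}, so Claim~1 is essentially routine bookkeeping. The genuinely delicate point is the last step of Claim~2 --- upgrading ``every subsequential limit of $v_x(t)$ is normal'' to ``$v_x(t)$ converges''. This requires quantitative control of two drifts along the trajectory, the motion of the foot point $\pi_M(\pi_x(t))$ and the consequent tilt of the normal spaces, showing both are of second order $O(\rho(t)^{2})$ relative to the first-order normal contraction $O(\rho(t))$, so that the normalised flow direction is governed by the constant-coefficient operator $H_f(z)$ up to an integrable error. A minor auxiliary point is verifying $\delta_c<\reach(M)$ after the initial shrinkage, which follows from Theorem~\ref{thm::SM2}.
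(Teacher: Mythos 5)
Your proposal is correct in outline but takes a genuinely different route from the paper on both claims, and more so on the second. For Claim~1 you share the paper's core estimate --- expanding $\nabla f$ about the foot point and applying Property~2 of Lemma~\ref{lem::property2} to lower-bound $(x-x_M)^T\nabla f(x)$ by $\lambda_0^2\Lambda_{\min}\,d(x,M)^2$ --- but you repackage it as a differential inequality for $\rho(t)^2=d(\pi_x(t),M)^2$, which gives exponential contraction and, via the arc-length bound $\int_0^\infty\|\pi_x'(t)\|\,dt<\infty$, a Cauchy argument that $\pi_x(\infty)$ exists and lies on $M$. The paper instead records two static facts --- every stationary point of $f$ inside $M\oplus\delta_c$ lies on $M$ (P1), and the drift points toward $M$ (P2, the same inner-product bound you use) --- and then appeals to the destination of a gradient flow being a stationary point; your Gronwall-style version is more quantitative and makes the existence of the limit explicit. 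For Claim~2 the routes diverge substantively: the paper fixes the limit point $z\in M$ and a normal unit $u$, computes $\lim_{\epsilon\to0}\tfrac{1}{\epsilon}G_f(z+\epsilon u)=H_f(z)u$, and reads off normality from $H_f(z)u\in N_M(z)$; you instead linearize the flow ODE near $z$, split $\pi_x'$ into tangential and normal parts relative to $N_M(z)$, show the tangential part is $o(\rho(t))$, and then propose to control the normal direction through a Rayleigh-quotient (gradient-flow-on-the-sphere) argument for $H_f(z)|_{N_M(z)}$ modulo integrable perturbations. You correctly identify the genuinely delicate step --- upgrading ``every subsequential limit of $v_x(t)$ is normal'' to convergence of $v_x(t)$ --- which the paper handles only implicitly by examining the gradient field along a fixed normal ray. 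The quantitative fact you need to close this, that the foot-point drift and normal-space tilt are second order in $\rho(t)$, does hold; a handy ingredient is $\int_t^\infty\rho(s)\,ds\le\rho(t)/(\lambda_0^2\Lambda_{\min})$, which follows directly from your own differential inequality and gives $\|\pi_x(t)-z\|=O(\rho(t))$. Your auxiliary caution about checking $\delta_c<\reach(M)$ before invoking unique projections is well placed; the paper's Lemma~\ref{lem::property2} Property~2 implicitly relies on the same thing.
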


The first result of Theorem~\ref{thm::GD} defines the convergence radius
of the gradient flow. 
The flow converges to the manifold
as long as the gradient flow starts within $\delta_c$ distance to the manifold. 
The second statement of the theorem characterizes the
orientation
of the gradient flow. The flow intersects the manifold
from the normal space of the manifold. Namely,
the flow hits the manifold orthogonally. 
If we choose $\Lambda = \mathbb{I}$ to be the identity matrix ($\Lambda_{\min}>0$ in this case), Theorem~\ref{thm::GD}
implies the convergence of the gradient flow of $\|\Psi\|^2$.

Note that Theorem~\ref{thm::GD} requires one additional derivative (D-3)
because we need to perform a Taylor expansion of the Jacobian of $\Psi$ around $M$
to ensure  that the gradient flow converges from
a normal direction to a manifold. 
We need the third-order derivatives to ensure that the remainders are small.

Suppose that  the initial point $x$ is drawn from the distribution $Q$, which has a PDF $q$, 
the convergent point $\pi_x(\infty)$
can be viewed as a random draw from a distribution $Q_M$ defined over the manifold $M$. 
The distribution $Q$ and the distribution $Q_M$ are associated via
the mapping induced by the gradient descent process;
thus,
$Q_M$ is a pushforward
measure of $Q$ \cite{bogachev2007measure}.
We now investigate how $Q$ and $Q_M$ are associated. 

For every point $z\in M$,
we define its basin of attraction \cite{chacon2015population, chen2016comprehensive} as
$$
A(z) = \{x: \pi_x(\infty)=  z\}.
$$
Namely, $A(z)$ is the collection of initial points that the gradient flow converges to $z\in M$. 
Let $\mathcal{A}_M = \cup_{z\in M} A(z)$ be the union of all basins of attraction.
The set $\mathcal{A}_M$
characterizes the regions where
the initialization leads to an accepted point in Algorithm~\ref{alg::alg}.
Thus, the acceptance probability of the rejection step of Algorithm~\ref{alg::alg}
is $Q(\mathcal{A}_M) = \int_{\mathcal{A}_M} Q(dx).$

Basin $A(z)$ has an interesting geometric property of
forming an $s$-dimensional manifold
under smoothness assumption. 
This result is similar to the stable manifold theorem in dynamical systems literature \cite{mcgehee1973stable,mcgehee1996new,banyaga2013lectures}.
In fact, it is more relevant to the local center manifold theorem (see, e.g., Section 2.12 of \cite{perko2013differential}).

\begin{theorem}[Local center manifold theorem]
Assume (D-3) and (F).
The basin of attraction $A(z)$
forms an $s$-dimensional manifold at each $z\in M$.
\label{thm::AMT}
\end{theorem}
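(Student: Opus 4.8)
The plan is to realize the basin $A(z)$ as a level set of the \emph{endpoint map} $\Pi(x)=\pi_x(\infty)$ and invoke the regular value theorem. First I would restrict to a neighborhood of $z$ contained in $M\oplus\delta_c$, on which Theorem~\ref{thm::GD} guarantees $\Pi$ is well defined with values in $M$, and argue that $\Pi$ is continuously differentiable there (this is the technical core, addressed below). Granting that, note that every point of $M$ is an equilibrium of the flow, since $\nabla f(x)=2G_\Psi(x)^T\Lambda\Psi(x)$ vanishes on $M$; hence $\Pi$ restricts to the identity on $M$. Consequently $D\Pi(z):\R^d\to T_zM$ already carries $T_zM\subset\R^d$ isomorphically onto $T_zM$, so $D\Pi(z)$ is surjective and $z$ is a regular point of $\Pi$, with $\mathrm{rank}\,D\Pi(z)=\dim M=d-s$. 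Surjectivity of the derivative is an open condition, so $\Pi$ is a submersion on a neighborhood $V$ of $z$, and $A(z)\cap V=\Pi^{-1}(z)\cap V$ is a $C^1$ submanifold of dimension $d-(d-s)=s$. Since $z\in A(z)$, this proves the claim at $z$; globally, writing $A(z)=\bigcup_{t\ge0}\pi_\cdot^{-t}\bigl(A(z)\cap V\bigr)$ and using that the time-$t$ flow is a diffeomorphism, $A(z)$ is an injectively immersed $s$-manifold.

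It remains to show $\Pi\in C^1$ near $z$, i.e.\ that $\pi_x(t)$ and its spatial Jacobian $J_x(t)=D_x\pi_x(t)$ converge as $t\to\infty$ uniformly for $x$ near $z$. Let $x_M(t)$ denote the (unique, by positive reach) projection of $\pi_x(t)$ onto $M$ and split $\R^d=N_M(x_M(t))\oplus T_M(x_M(t))$. By Lemma~\ref{lem::property2}, on $M\oplus\delta_c$ the Hessian $H_f$ is uniformly positive definite in the normal directions with smallest eigenvalue $\ge\lambda_0^2\Lambda_{\min}$, while on $M$ it annihilates $T_M$, so its tangential block is $O\bigl(d(x,M)\bigr)$ nearby; together with the estimates behind Theorem~\ref{thm::GD} this gives geometric decay $d(\pi_x(t),M)\le Ce^{-ct}d(x,M)$ with $c\asymp\lambda_0^2\Lambda_{\min}$, and $|\dot\pi_x(t)|=O(e^{-ct})$. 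The variational equation $\dot J_x=-H_f(\pi_x(t))J_x$ then splits into a normal--normal block with generator $\preceq-\lambda_0^2\Lambda_{\min} I$ (hence exponentially contracting, so $J_x(t)$ stays bounded) and a tangential block whose generator is $O(e^{-ct})$, so that $J_x(t)$ is Cauchy as $t\to\infty$. Uniform convergence of $\pi_x(t)$ and of $J_x(t)$ then yields $\Pi\in C^1$ with $D\Pi(x)=\lim_{t\to\infty}J_x(t)$; assumption (D-3), i.e.\ $\nabla f\in C^2$, is exactly what makes the variational argument valid and delivers this regularity.

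I expect the main obstacle to be precisely this last step: the uniform-in-$t$ control of $J_x(t)$ and hence differentiability of the infinite-time map $\Pi$. The equilibrium $z$ is degenerate, since its linearization $-H_f(z)$ has $s$ strictly negative eigenvalues and $d-s$ zero eigenvalues, so the classical hyperbolic stable-manifold theorem does not apply directly; what saves the argument is that the zero eigenspace is filled by the manifold $M$ of equilibria and the normal spectrum is uniformly bounded away from $0$, i.e.\ $M$ is normally attracting. One may also phrase this invariantly via the center/stable manifold theory of normally hyperbolic invariant manifolds, in which case $A(z)$ is the stable leaf through $z$ of the stable foliation of $M$ and is a $C^1$ $s$-disk by construction; I would nonetheless favor the self-contained submersion argument above, because every estimate it needs is already available from Lemma~\ref{lem::property2} and the proof of Theorem~\ref{thm::GD}.
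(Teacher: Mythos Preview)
Your approach is sound but genuinely different from the paper's. The paper proceeds via the Lyapunov--Perron integral equation: it expands the flow equation about the equilibrium $z$ as a perturbation of the linear system $\dot y=-H_f(z)y$, writes the solution by variation of parameters, projects onto the normal and tangent eigenspaces of $H_f(z)$, and observes that sending $t\to\infty$ forces the tangential part $v_{x,T}$ of the initial displacement to be determined by the normal part $v_{x,N}$ through the remainder integrals. This yields an explicit parametrization of $A(z)\cap B(z,r)$ by $(t,v_{x,N})$ with $\|v_{x,N}\|=r$, hence an $s$-dimensional manifold. Your argument instead packages everything into the endpoint map $\Pi$ and the regular value theorem, with the dimension count following immediately from $\Pi|_M=\mathrm{id}_M$.

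Both routes rest on the same spectral picture: $-H_f(z)$ has $s$ strictly negative eigenvalues along $N_zM$ and $d-s$ zero eigenvalues along $T_zM$, and the latter degeneracy is compensated by the fact that $M$ is a manifold of equilibria. Your approach is cleaner conceptually and correctly isolates the only nontrivial analytic input---the $C^1$ regularity of $\Pi$ via uniform-in-$t$ control of the variational Jacobian---whereas the paper's integral-equation derivation gives a more explicit chart for $A(z)$ but leaves the contraction/fixed-point step (why the remainder integrals uniquely and smoothly determine the solution from $v_{x,N}$) implicit. If you complete the $J_x(t)$ estimate with care about the moving frame $N_M(x_M(t))\oplus T_M(x_M(t))$---or, as you note, simply invoke the stable foliation of a normally attracting invariant manifold---your argument is complete and arguably more transparent than the paper's.
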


An outcome from Theorem~\ref{thm::AMT}
is that the pushforward measure $Q_M$
has an $s$-dimensional Hausdorff density function \cite{mattila1999geometry,preiss1987geometry}
if $Q$ has a regular PDF $q$. 
Note that an  $s$-dimensional Hausdorff density at point $x$
is defined through
$$
\lim_{r\rightarrow 0}\frac{Q_M(B(x,r))}{C_s r^s},
$$
where $C_s$ is the $s$-dimensional volume of a unit ball. 
If the limit of the above equation exists, $Q_M$ has an $s$-dimensional Hausdorff density at point $x$.

Thus, if we obtain $Z_1,\cdots,Z_N\in M$ from
applying Algorithm~\ref{alg::alg},
we may view them as IID observations
from a distribution $Q_M$ defined over the manifold $M$,
and this distribution $Q_M$ has
an $s$-dimensional Hausdorff density function. 
The model that we observe IID $Z_1,\cdots,Z_N$
from a distribution supported over a lower-dimensional manifold
is common in computational geometry
\cite{Cheng2005,dey2006curve,dey2006provable,chazal2008smooth}.
Hence, Theorem~\ref{thm::AMT}
implies that Algorithm~\ref{alg::alg}
provides a new statistical example
for this model.

Note that \cite{arias2016estimation} proved the stability of a gradient ascent flow
when the target is to find the local modes of density function. 
The stability of basins of attraction was studied in \cite{chen2017statistical}
in a similar scenario.
These results may be generalized to solution manifolds.
The major technical issue that we need to solve is that
the convergent points of flows form a collection of infinite number of points.
Therefore, the analysis is much more complicated.
We leave this as a future work.

\subsection{Analysis of the gradient descent algorithm}

In Algorithm~\ref{alg::alg}, we did not perform the gradient descent using the flow $\pi_x$; instead, 
we used an iterative gradient descent approach that creates a sequence of discrete points $x_0,x_1,\cdots, x_\infty$ 
such that 
\begin{equation}
x_{t+1} = x_t -\gamma \nabla f(x_t),\quad x_0 = x,
\label{eq::GD::alg}
\end{equation}
where $\gamma>0$ is the step size. 

The gradient descent algorithm will diverge if  the step size $\gamma$ is chosen incorrectly.
Thus, it is crucial to investigate the range of $\gamma$
leading to a convergent point $x_{\infty}$
and how fast the sequence $\{x_t:t=0,1,\cdots\}$ converges to a point on $M$. 
The following theorem characterizes the algorithmic convergence along
with a feasible range of the step size $\gamma$.

\begin{theorem}[Linear convergence]
Assume (D-2) and (F).
When the initial point $x_0$ and step size $\gamma$
satisfy 
\begin{align*}
d(x_0, M)&< \delta_c = \min\left\{\delta_0, \frac{\Lambda_{\min}\lambda_0^2}{8d\Lambda_{\max} \|\Psi\|^*_{\infty, 2}\|\Psi\|^*_{\infty, 3}}\right\},\\
\gamma&<\min\left\{\frac{1}{\Lambda_{\max}\|\Psi\|^*_{\infty, 2}}, \frac{\Lambda_{\max}\|\Psi\|^*_{\infty, 2}}{4\lambda^4_0\Lambda^2_{\min}}, \delta_c\right\},
\end{align*}
we have the following properties for $t=0,1,2,\cdots$:
\begin{align*}
f(x_t)&\leq  f(x_0) \cdot \left(1- \gamma\frac{ \lambda^4_0\Lambda^2_{\min}}{\Lambda_{\max}\|\Psi\|^*_{\infty, 2}} \right)^t,\\
d(x_t, M)&\leq d(x_0, M) \left(1-\gamma\lambda_0^2 \Lambda_{\min}\right)^{\frac{t}{2}}.
\end{align*}
\label{thm::alg}

\end{theorem}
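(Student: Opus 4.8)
The plan is to treat the iteration $x_{t+1}=x_t-\gamma\nabla f(x_t)$ as a descent method on $f(x)=\Psi(x)^T\Lambda\Psi(x)$ and to run two coupled inductions in parallel: one showing that $f(x_t)$ decays geometrically, and one showing that $d(x_t,M)$ stays below $\delta_c$ so that all the favorable curvature estimates of Lemma~\ref{lem::property2} remain available along the entire trajectory. The key analytic inputs are (i) a smoothness (descent-lemma) bound $f(x_{t+1})\le f(x_t)-\gamma(1-\tfrac{\gamma}{2}L)\|\nabla f(x_t)\|^2$ with $L=\sup_x\|H_f(x)\|\le C\Lambda_{\max}\|\Psi\|^*_{\infty,2}$ (here one uses (D-2) to bound the Hessian of $f$ in terms of first and second derivatives of $\Psi$), and (ii) a Polyak--\L ojasiewicz-type inequality $\|\nabla f(x)\|^2\ge \mu\, f(x)$ valid on $M\oplus\delta_c$, with $\mu$ of order $\lambda_0^4\Lambda_{\min}^2/(\Lambda_{\max}\|\Psi\|^*_{\infty,2})$. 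Combining (i) and (ii) with the stated bound $\gamma<1/(\Lambda_{\max}\|\Psi\|^*_{\infty,2})$ (which makes $1-\tfrac{\gamma}{2}L\ge\tfrac12$) gives $f(x_{t+1})\le f(x_t)\bigl(1-\gamma\mu/2\bigr)$, i.e.\ the first displayed inequality after absorbing constants; iterating yields the $t$-th power.

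First I would establish the PL inequality. Fix $x$ with $d(x,M)<\delta_c$ and let $x_M$ be its unique projection (unique because $\delta_c\le\delta_0\le\reach(M)$ by Theorem~\ref{thm::SM2}). Write $f(x)=\Psi(x)^T\Lambda\Psi(x)$ and $\nabla f(x)=2 G_\Psi(x)^T\Lambda\Psi(x)$, so $\|\nabla f(x)\|^2=4\Psi(x)^T\Lambda G_\Psi(x)G_\Psi(x)^T\Lambda\Psi(x)\ge 4\Lambda_{\min}^2\lambda_{\min,>0}(G_\Psi G_\Psi^T)\|\Psi(x)\|^2\ge 4\Lambda_{\min}^2\lambda_0^2\cdot 2\,\|\Psi(x)\|^2$ once $G_\Psi(x)^T\Lambda\Psi(x)$ is shown to lie (up to lower-order error controlled by (D-2) and $d(x,M)$) in the row space of $G_\Psi(x)$, which is where the eigenvalue bound of (F) bites; simultaneously $f(x)=\Psi(x)^T\Lambda\Psi(x)\le\Lambda_{\max}\|\Psi(x)\|^2$. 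Dividing gives $\|\nabla f(x)\|^2\ge (8\lambda_0^2\Lambda_{\min}^2/\Lambda_{\max})\,f(x)$; the extra $\lambda_0^2/\|\Psi\|^*_{\infty,2}$ in the stated rate comes from the more careful version where one also uses $f(x)\lesssim \|\Psi\|^*_{\infty,2}\,d(x,M)^2\cdot(\cdots)$ near $M$ — I would actually route the PL bound through $d(x,M)$ via a Taylor expansion of $\Psi$ around $x_M$, using $\Psi(x)=G_\Psi(x_M)(x-x_M)+O(\|\Psi\|^*_{\infty,2}d(x,M)^2)$ and the fact (Lemma~\ref{lem::normal}) that $x-x_M$ is normal to $M$, so $\|G_\Psi(x_M)(x-x_M)\|\ge\lambda_0\sqrt2\,d(x,M)$.

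Next I would run the invariance induction. Assuming $d(x_t,M)<\delta_c$, the descent step decreases $f$, hence decreases $\|\Psi(x_t)\|$, hence (again by the Taylor comparison near $M$, which is two-sided) controls $d(x_{t+1},M)$. Concretely one shows the step moves $x_t$ essentially along the normal direction toward $M$ by an amount $\ge\gamma\lambda_0^2\Lambda_{\min}\,d(x_t,M)$ up to a second-order remainder, giving $d(x_{t+1},M)^2\le d(x_t,M)^2(1-\gamma\lambda_0^2\Lambda_{\min})$ — this is the second displayed inequality — and in particular $d(x_{t+1},M)<d(x_t,M)<\delta_c$, closing the induction; the condition $\gamma<\Lambda_{\max}\|\Psi\|^*_{\infty,2}/(4\lambda_0^4\Lambda_{\min}^2)$ is exactly what keeps $1-\gamma\lambda_0^2\Lambda_{\min}\in(0,1)$ and prevents overshoot past $M$, while $\gamma<\delta_c$ guarantees a single step cannot jump outside the good neighborhood even at the start.

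The main obstacle will be step (iii): making rigorous the claim that near $M$ the gradient $\nabla f(x)$ points (up to controllable error) in the normal direction, so that the contraction in $d(\cdot,M)$ genuinely holds rather than just the contraction in $f$. This is precisely where the third derivative would be needed for the flow (Theorem~\ref{thm::GD}) but is avoided here at the cost of working with the discrete iteration and the PL inequality directly; one must carefully track that the tangential component of $\nabla f(x)$ is $O(\|\Psi\|^*_{\infty,2}\,d(x,M)^2)$ — vanishing to second order because $\Psi(x_M)=0$ and $G_\Psi(x_M)^T\Lambda G_\Psi(x_M)(x-x_M)$ is normal — and that this error is dominated by the first-order normal term on the region $d(x,M)<\delta_c$, which is exactly how $\delta_c$ was defined in Lemma~\ref{lem::property2} (the ratio $\lambda_0^2/(\|\Psi\|^*_{\infty,2}\|\Psi\|^*_{\infty,3})$). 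Everything else is bookkeeping with the two geometric series.
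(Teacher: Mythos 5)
Your proposal is correct and follows essentially the same route as the paper's proof: a descent-lemma bound with $L=\sup_z\|H_f(z)\|_2\le\Lambda_{\max}\|\Psi\|^*_{\infty,2}$, a Polyak--\L{}ojasiewicz inequality obtained by Taylor-expanding around the projection $x_M$ and invoking the normal-space eigenvalue bound $\lambda_{\min,\perp,M}(H_f)\ge\lambda_0^2\Lambda_{\min}$ of Lemma~\ref{lem::property2}, and a direct expansion of $\|x_{t+1}-x_{t,M}\|^2$ for the distance contraction. The only quibbles are cosmetic: $G_\Psi(x)^T\Lambda\Psi(x)$ lies exactly (not approximately) in the row space of $G_\Psi(x)$, and the condition $\gamma<\Lambda_{\max}\|\Psi\|^*_{\infty,2}/(4\lambda_0^4\Lambda_{\min}^2)$ serves to pin down the stated rate for $f(x_t)$ rather than to keep $1-\gamma\lambda_0^2\Lambda_{\min}$ positive (positivity already follows from the first step-size condition).
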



The convergence radius $\delta_c$ is the same as in Theorem~\ref{thm::GD}. 
Theorem~\ref{thm::alg} shows that
when the initial point is within the convergence radius of the gradient flow
and the step size is sufficiently small, 
the gradient descent algorithm converges linearly
to a point on the manifold. 
An equivalent statement of Theorem~\ref{thm::alg}
is that the algorithm takes only $O(\log(1/\epsilon))$ iterations to converge to $\epsilon$-error to the minimum.

The key step in the derivation of Theorem~\ref{thm::alg} 
is to investigate the minimal eigenvalue of the normal space $\lambda_{\min,\perp}(H_f(x))$
for each $x\in M$.
This quantity (appears in the theorem through the lower bound $\lambda_0^2\Lambda_{\min}$)
controls the flattest direction of $f(x)$ in the normal space. 
The three requirements on the step sizes are due to different reasons.
The first requirement  ($\gamma<\frac{1}{\Lambda_{\max}\|\Psi\|^*_{\infty, 2}}$) ensures that the objective
function is decreasing. 
The second requirement ($\gamma<\frac{\Lambda_{\max}\|\Psi\|^*_{\infty, 2}}{\lambda_0^2\Lambda_{\min}}$)
establishes the convergence rate. 
The third requirement ($\gamma <\delta_c$) guarantees that
the Hessian matrix behaves of $f$ is well-behaved when applying the gradient descent algorithm. 
The first and third requirements together are enough for the convergence of the gradient descent algorithm
but will not lead to the convergence rate. 
We need the additional second requirement to obtain the convergence rate.

Theorem~\ref{thm::alg} is a very interesting result. The function $f(x)$
is non-convex within any neighborhood of $M$ (i.e., not locally convex), but the gradient descent algorithm still 
converges linearly to a stationary point. 
An intuitive explanation of this result is that
the function $f(x)$ is a `directionally' convex function 
in the normal subspace of $M$ (Property 2 in Lemma~\ref{lem::property2}). 
Note that similar to Theorem~\ref{thm::GD},
Theorem~\ref{thm::alg} applies to the gradient descent algorithm with $\Lambda= \mathbb{I}$.


%

\section{Statistical Applications}	\label{sec::app}
In this section, we show that
the idea of solution manifolds can be applied to various statistical problems.
We also include a Bayesian approach that finds a credible region on a solution manifold
in Appendix \ref{sec::bayesian}.

\subsection{Missing data}	\label{sec::missing}

The solution manifold framework we developed can be
used to analyze the nonparametric bound in the missing data problem \cite{manski1990nonparametric,manski1999identification, chen2018monte}.
We use Example~\ref{eq::missing::psi} to illustrate the idea, but
our analysis can be generalized to a complex missing data scenario. 
The nonparametric bound refers to the feasible range of parameters $\theta = (\zeta_{x,y}, \mu_x, \xi: x,y=0,1)\in\R^7$
without any additional assumptions. 
Hence, the only constraint for these seven parameters is the six equations in equation \eqref{eq::missing::psi}. 
Thus, we know that the resulting parameter space is a one-dimensional manifold. 

This manifold will be a smooth one due to Theorem~\ref{thm::SM2}.
The stability theorem informs us that when we estimate the constraints by sample analogues,
the estimated manifold (can be viewed as an estimated nonparametric bound)
will be at $O_P(1/\sqrt{n})$ distance to the population manifold by the stability theorem (Theorem~\ref{thm::LU0}). 

Algorithm~\ref{alg::alg} provides a simple approach for numerically finding
points on this solution manifold. 
We can
obtain a point cloud approximation of the 1D manifold characterizing the nonparametric bound
of all the parameters with multiple random initializations.

\subsection{Algorithmic fairness}

We now revisit the algorithmic fairness problem in Example~\ref{ex::fair}.
We have shown that a simple method of generating
a test fair classifier $Q$ from $W,A$ is to sample from a Bernoulli random variable
with a parameter $q_{W,A} = P(Q=1|W,A)$
that satisfies equation \eqref{eq::fair2}. 
This leads to a solution manifold $\Theta = \{\theta = (q_{w,a}: w,a=0,1): \Psi(\theta) = 0\}$,
where $\Psi(\theta)$ is described in equation \eqref{eq::fair2}. 
By Theorem~\ref{thm::SM2}, the collection of feasible parameters
will be a smooth manifold. When we estimate the underlying constraint by
a random sample, the convergence rate (of manifolds) is described by Theorem~\ref{thm::LU0}.

In practice, finding $\Theta$ is often not the ultimate goal.
Our goal is to find a classifier that is test fair and has a good classification error \cite{hardt2016equality, corbett2017algorithmic}. 
A conventional approach of measuring classification accuracy is via a loss function $L(y,y')$ 
and we want to find the optimal $q^*_{w,a}\in \Theta$
such that 
$$
q^* = {\sf argmin}_{q\in \Theta}R(q) = \E_{Q\sim q}(L(Y, Q)).
$$
This is essentially a manifold constraint maximization/minimization problem. 
This problem also occurs in the constraint likelihood inference (see next section)
where we want to find the MLE under a solution manifold constraint. 
We will discuss a unified treatment of this manifold-constraint optimization problem in the next section
and
propose an algorithm for it (Algorithm~\ref{alg::alg::MLE}). 
While the algorithm is written in the form of finding the MLE,
one can easily adapt it to the test fairness problem. 


\subsection{Parametric model}	\label{sec::para}

One scenario that the solution manifolds will be 
useful is analyzing parametric models. 
We provide two different examples
showing how solution manifolds
can be used in parametric modeling.
Suppose that we observe IID observations $X_1,\cdots, X_n$
from some distribution $P$,
and we model the distribution using a parametric model $P_\theta$
and $\theta\in\Theta$. Let $p_\theta$ be the PDF/PMF of $P_\theta$
and let 
$$
\ell(\theta|X_1,\cdots,X_n) = \frac{1}{n}\sum_{i=1}^n \log p_\theta(X_i)
$$
be the log-likelihood function. 
Note that in Appendix~\ref{sec::bayesian},
we also provide
a Bayesian procedure that approximates the posterior distribution on a manifold (Algorithm~\ref{alg::alg::AMP}).

\subsubsection{Constrained MLE}	\label{sec::LRT}


In the likelihood inference, we may need to compute
the MLE under some constraints.  
One example
is the likelihood ratio test
when
the parametric space $\Theta_0$ under $H_0$
is generated by equality constraints.
Namely, 
$$
\Theta_0 = \{\theta\in \Theta: \Psi(\theta) = 0\}.
$$
This problem occurs in algebraic statistic and asymptotic theories 
can be found in \cite{michalek2016exponential} and Section 5 of \cite{drton2007algebraic}.

To use the likelihood ratio test, we need to find the MLEs under both $\Theta_0$ and $\Theta$.
Finding the MLE under $\Theta$ is a regular statistical problem. 
However, finding the MLE under $\Theta_0$ may not be easy
because of the constraint $\Psi(\theta)= 0 $. 
We propose to a procedure combining the gradient ascent of the likelihood function
and the gradient descent to the manifold
to compute the constrained MLE.
Algorithm~\ref{alg::alg::MLE} describes the procedure, and Figure~\ref{fig::ex02_04}
provides a graphical illustration.
This algorithm consists of a one-step gradient ascent of the likelihood function (Step 3)
and a gradient descent to manifold (Algorithm \ref{alg::alg}; steps 4 and 5).

The stopping criterion (Step 6) is
that $\nabla \ell(\theta^{(m)}_{\infty} |X_1,\cdots, X_n)$ belongs
to the row space of $\nabla\Psi(\theta^{(m)}_{\infty})$.
Due to Lemma~\ref{lem::normal},
the row space of $\nabla\Psi(\theta^{(m)}_{\infty})$
is the normal space of $M$ at $\theta^{(m)}_{\infty}$. 
It is easy to see that any critical points of the log-likelihood function 
on the manifold satisfy the condition that
the likelihood gradient belongs to the row space of $\nabla\Psi$;
thus, the constrained MLE is a stationary point in Algorithm~\ref{alg::alg::MLE}. 
As a result, we stop the algorithm when the stopping criterion occurs.
However, other local modes and saddle points and local minima are
also the stationary points. Hence, in practice, we need to
run the algorithm with multiple initial points to increase
the chance of finding the true MLE.

Note that one may replace the gradient ascent process by the EM algorithm.
However, 
the EM algorithm is not identical to a gradient ascent, so
it is unclear if the movement $\theta^{(m+1)}_{0} - \theta^{(m)}_{\infty}$
will be normal to the manifold $\Theta_0$ or not. 


\begin{algorithm}[tb]
\caption{Manifold-constraint maximizing algorithm} 
\label{alg::alg::MLE}
\begin{algorithmic}
\State 1. Randomly choose an initial point $\theta^{(0)}_0= \theta^{(0)}_\infty\in \Theta$.

\State 2. For $m=1,2,\cdots$, do step 3-6:
\State 3. {\bf Ascent of likelihood.} Update
\begin{equation}
\theta^{(m)}_{0} = \theta^{(m-1)}_{\infty} + \alpha \nabla \ell(\theta^{(m-1)}_{\infty} |X_1,\cdots, X_n),
\label{eq::MLE::GD}
\end{equation}
where $\alpha>0$ is the step size of the gradient ascent over likelihood function
and $\ell(\theta |X_1,\cdots, X_n)$ is the log-likelihood function.
\State 4. {\bf Descent to manifold.} For each $t=0,1,2,\cdots$ iterates 
$$
\theta^{(m)}_{t+1}\leftarrow \theta^{(m)}_t - \gamma \nabla f(\theta^{(m)}_t)
$$
until convergence. Let $\theta^{(m)}_{\infty}$ be the convergent point. 
\State 5. If $\Psi(\theta^{(m)}_{\infty}) = 0$ (or sufficiently small), we keep $\theta^{(m)}_{\infty}$; otherwise, discard $\theta^{(m)}_{\infty}$
and return to step 1.
\State 6. If $\nabla \ell(\theta^{(m)}_{\infty} |X_1,\cdots, X_n)$ belongs
to the row space of $\nabla\Psi(\theta^{(m)}_{\infty})$, we stop
and output $\theta^{(m)}_{\infty}$.


\end{algorithmic}
\end{algorithm}

\begin{example}[Testing a tail probability in a Gaussian model]	\label{ex::Gaussian}
To illustrate the idea, suppose that $X_i\in\R$ is from an unknown distribution that we place a parametric model on it.
We further assume that the parametric distribution is
a Gaussian $N(\mu,\sigma^2)$ with unknown mean and variance. 
Consider the null hypothesis
$$
H_0:  P(r_0\leq Y\leq r_1) = s_0
$$
for some given $s_0>0$ and $r_0,r_1$ (note that this example also appears in Figure~\ref{fig::ex02}).
Let  $\Phi(y) = P(Z\leq y)$ denote the CDF of a standard normal.
The null hypothesis $H_0$ forms the following constraint on $(\mu,\sigma^2)$:
$$
s_0 = \Phi\left(\frac{r_1-\mu}{\sigma}\right)-\Phi\left(\frac{r_0-\mu}{\sigma}\right).
$$
Thus,
$$
\Psi(\mu,\sigma) = \Phi\left(\frac{r_1-\mu}{\sigma}\right)-\Phi\left(\frac{r_0-\mu}{\sigma}\right) - s_0\in\R.
$$
The feasible set of $(\mu,\sigma^2)$ forms a $1D$ solution manifold in $\R^2$.
%
It is difficult to find the analytical form of the MLE under $H_0$,
but we may use the method in Algorithm~\ref{alg::alg::MLE}
to obtain a numerical approximation.
The derivative of $\Psi(\mu,\sigma)$ with respect to $\mu$ and $\sigma$
has the following closed-form:
\begin{align*}
\frac{\partial}{\partial \mu} \Psi(\mu,\sigma) &= -\frac{1}{\sigma} \phi\left(\frac{r_1-\mu}{\sigma}\right)+ \frac{1}{\sigma}\phi\left(\frac{r_0-\mu}{\sigma}\right) \\
\frac{\partial}{\partial \sigma} \Psi(\mu,\sigma)&= -\frac{r_1-\mu}{\sigma^2} \phi\left(\frac{r_1-\mu}{\sigma}\right)+\frac{r_0-\mu}{\sigma^2}\phi\left(\frac{r_0-\mu}{\sigma}\right),
\end{align*}
where $\phi(y) =\frac{1}{\sqrt{2\pi}}e^{-y^2/2}$  is the PDF of the standard normal.
Algorithm~\ref{alg::alg::MLE} can easily be implemented with the above derivatives.
Figure~\ref{fig::ex02_04} shows an example of applying Algorithm~\ref{alg::alg::MLE}
to this example with $r_1=-5,r_2=2$ and $s_0=0.5$ (and $1000$ random numbers generated from
$N(1.5,3^2)$). 
All  five random initial points converge to
the maximum on the manifold.
\end{example}

\begin{figure}
\center
\includegraphics[height=2in]{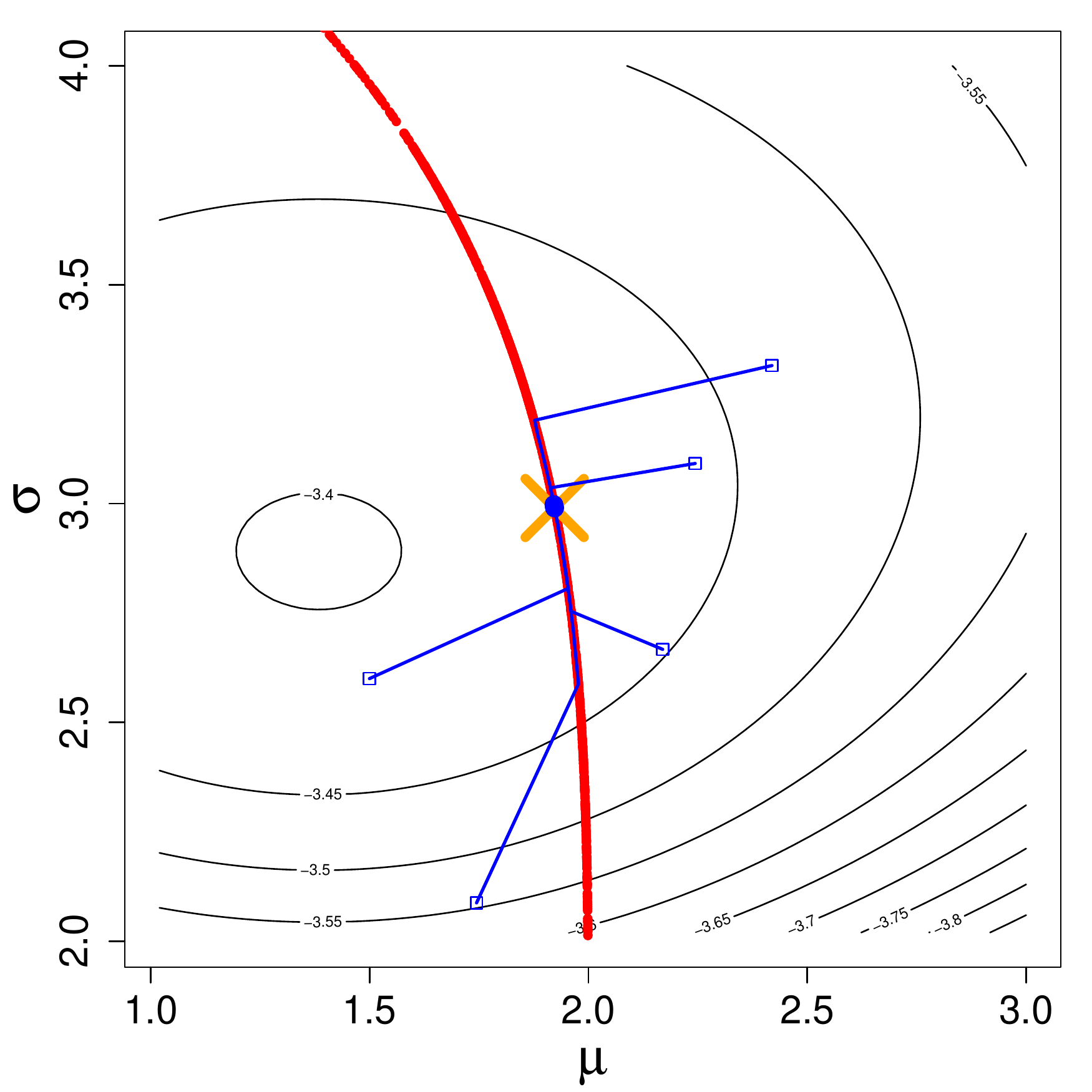}
\includegraphics[height=2in]{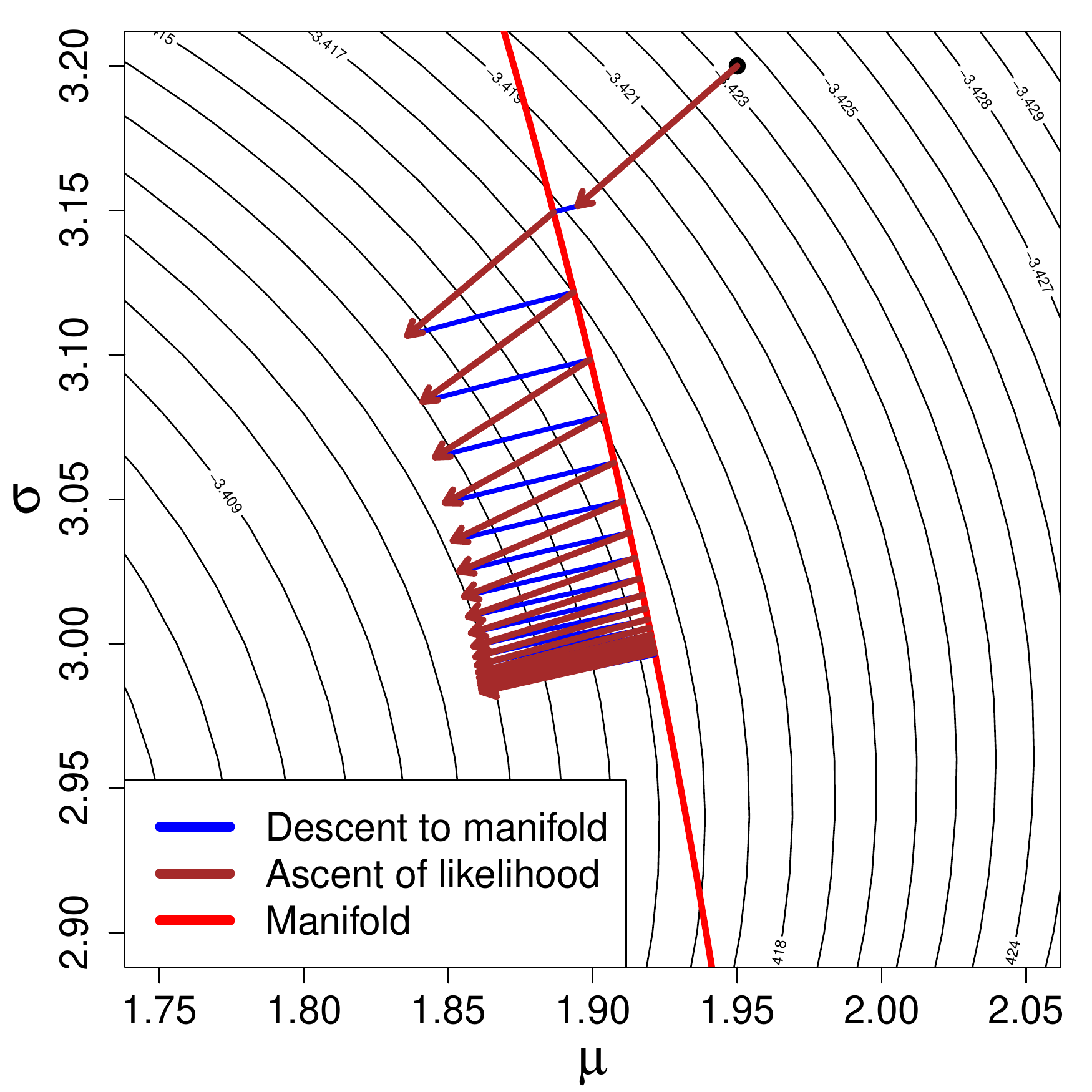}
\caption{An example illustrating how Algorithm \ref{alg::alg::MLE} works.
We consider the example of estimating the tail probability in a Gaussian model $N(\mu,\sigma^2)$
with the constraint $P(-5\leq X\leq 2) = 0.5$. 
We generate $n=1000$ points from $N(1.5,3^2)$ and display the log-likelihood function in the two panels (contours are the
log-likelihood surface).
{\bf Left:} We initialize Algorithm \ref{alg::alg::MLE} with five random points (blue boxes) 
and the algorithm creates an ascending path (blue lines) to the maximum point (orange cross).
{\bf Right:} We illustrate  Algorithm \ref{alg::alg::MLE} by showing points in each iteration in the algorithm
in a zoom-in area relative to the left panel. 
Starting from the  solid black point, we first perform a gradient ascent with respect to the log-likelihood function (brown arrow)
then apply Algorithm~\ref{alg::alg} to descent to the solution manifold.
We keep repeating this process until it converges. 
}
\label{fig::ex02_04}
\end{figure}

\subsubsection{Partial identification and generalized method of moments}	\label{sec::PI}

The solution manifolds appear in the partial identification problem \cite{manski2003partial}
in Econometrics. 
One example is the moment constraint problem \cite{chernozhukov2007estimation},
also known as the generalized method of moments \cite{hansen1982large, hansen1982generalized}.
In this case,
we want to estimate parameter $\theta\in \R^d$
that solves the moment equation
$
\E(g(Y;\theta)) = 0,
$
where
$g(y;\theta)\in \R^s$ is a vector-valued function,
and $X$ is a random variable denoting the observed data. 
When $s<d$, the solution set (also called an identified set in \cite{chernozhukov2007estimation}) $M = \{\theta: \E(f(Y;\theta)) = 0 \}$
forms a solution manifold. 

Thus, the smoothness theorem (Theorem~\ref{thm::SM2}) and the stability theorem (Theorem~\ref{thm::LU0})
can be applied to this case. 
When the estimator is obtained by the empirical moment equation
$
\hat M_n = \left\{\theta: \frac{1}{n}\sum_{i=1}^nf(Y_i;\theta) = 0 \right\},
$
Theorem~\ref{thm::LU0}
implies 
$
{\sf Haus}(\hat M_n, M) \overset{P}{\rightarrow} 0
$
when the empirical moments $ \frac{1}{n}\sum_{i=1}^nf(Y_i;\theta) $ 
and its derivatives with respect to $\theta$
converge to the population moments $\E(g(X;\theta)) $ 
and its derivatives, respectively.

In generalized method of moments,
a common approach of finding a solution to $\E(g(Y;\theta)) = 0$
is by
minimizing a criterion function $Q(\theta) =  \E(g(Y;\theta))^T \Lambda \E(g(Y;\theta))$, where $\Lambda$
is a positive definite matrix \cite{hansen1982generalized}.
This is identical to the function $f$ defined in equation \eqref{eq::criterion}. 
Thus, the analysis in Section~\ref{sec::MCGD} can
be used to study the minimization problem in the generalized method of moments.

\begin{remark}
In econometrics, a similar problem to the solution manifold
is the inequality constraint problem, which occurs when we replace the equality constraints with inequality constraints \cite{tamer2010partial,
romano2010inference}, i.e., 
$
\E(g_\ell(Y;\theta))\leq 0
$
for $\ell=1,\cdots, s$.
The goal is to find $\theta$ satisfying the above inequality constraint. 
A common approach to finding the feasible set is by defining an objective function
$$
Q(\theta) = \left|\sum_{\ell=1}^s[\E(g_\ell(Y;\theta))]_+\right|^2,\quad [y]_+ = \max(y,0)
$$
such that the feasible set is $\{\theta: Q(\theta)=0\}$.
The inequality constraint implies that $\{\theta: Q(\theta)=0\}$
may not form a lower-dimensional manifold but
a subset of the original parameter space.

A common estimator of $\{\theta: Q(\theta)=0\}$ is 
$$
\left\{\theta: \hat Q_n(\theta)\leq c_n\right\} ,\quad \hat Q_n(\theta) = \left|\sum_{\ell=1}^s\left[\frac{1}{n}\sum_{i=1}^ng_\ell(Y_i;\theta)\right]_+\right|^2
$$
for some sequence $c_n\rightarrow 0$. 
Note that by properly choosing $c_n$, we may construct both an estimator and a confidence region;
see \cite{chernozhukov2007estimation} and \cite{romano2010inference} for more details.

\label{rm::inequality}
\end{remark}

\subsection{Nonparametric set estimation}	\label{sec::NP}

\begin{figure}
\center
\includegraphics[height=1.5in]{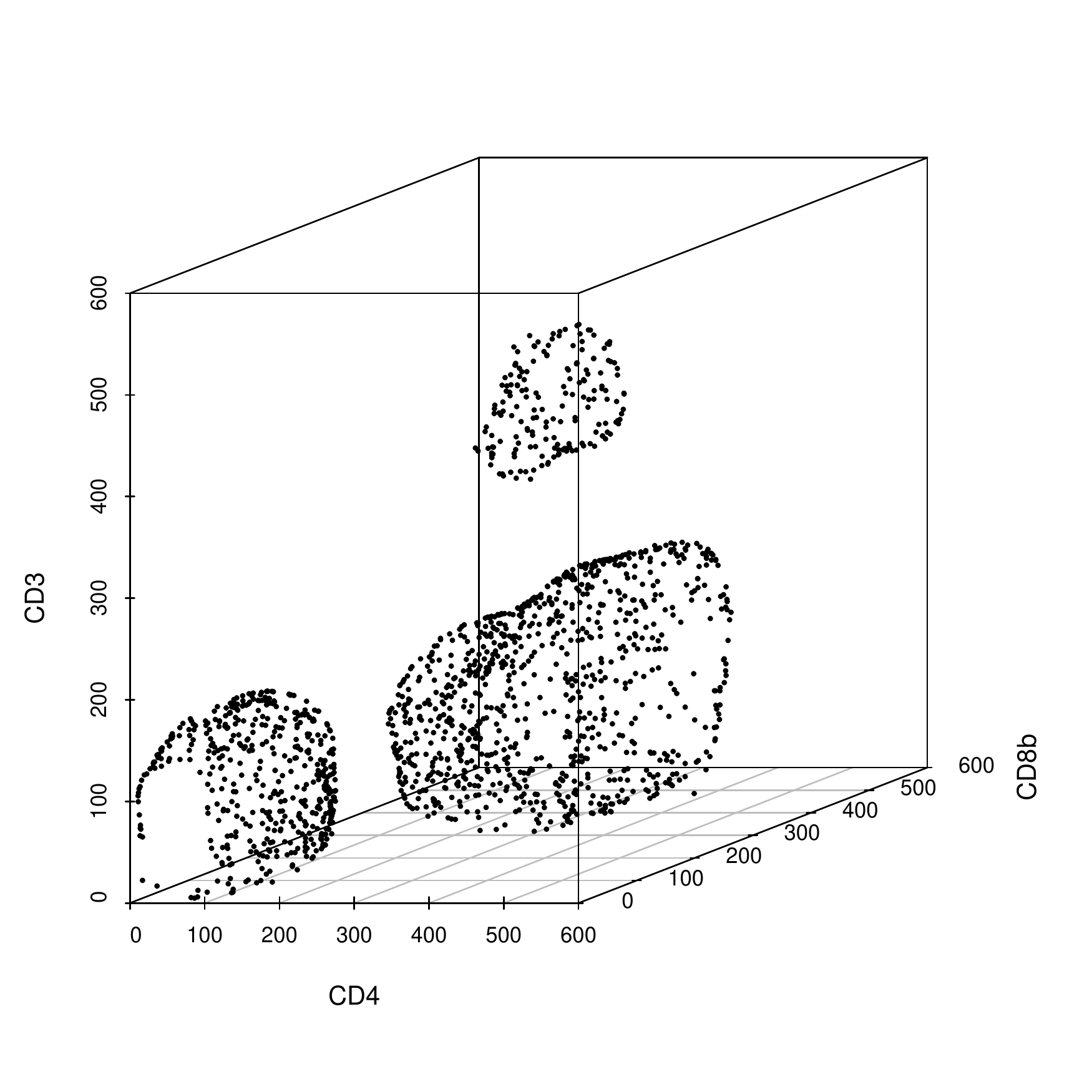}
\includegraphics[height=1.5in]{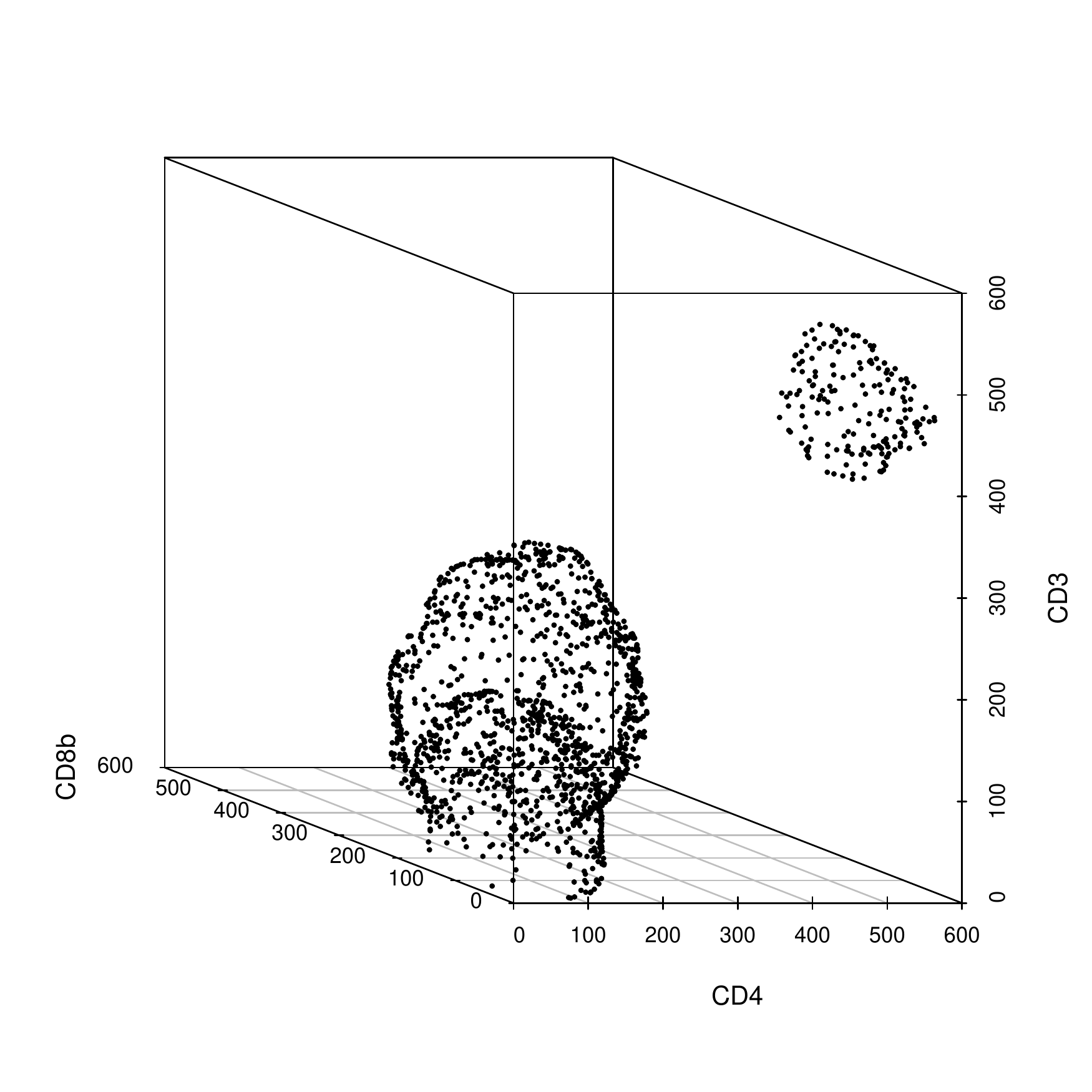}
\includegraphics[height=1.5in]{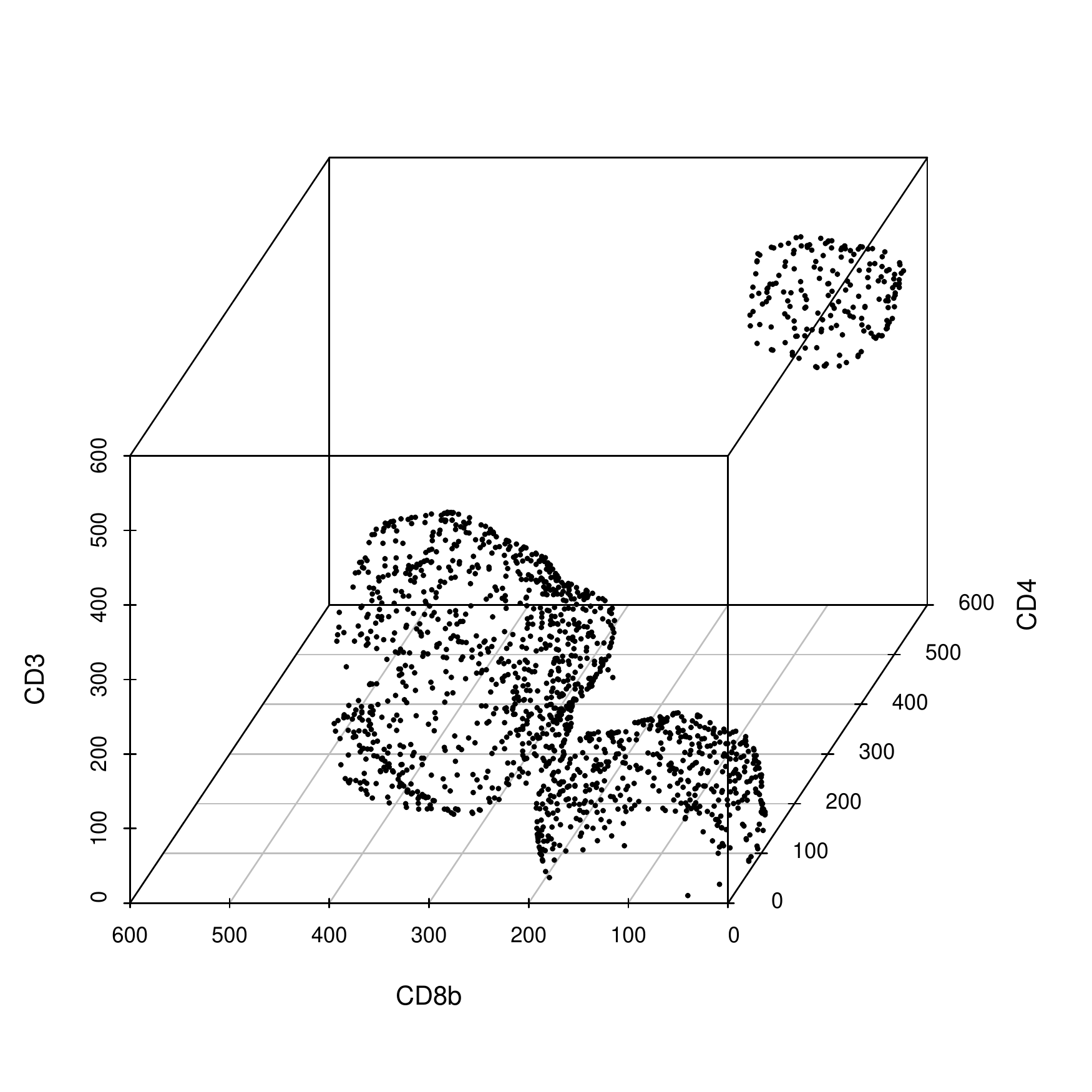}
\caption{An example of approximating a density level set.
This is the Graft-versus-Host Disease (GvHD) data that we borrowed from \texttt{mclust} package in \texttt{R}.
We use the control group and choose three variables: \texttt{CD3}, \texttt{CD4}, and \texttt{CD8b}. 
We apply a Gaussian kernel density estimator with bandwidth $h=20$ (same in all coordinates). 
The level set of interest is the density level corresponding to $25\%$ quantile of densities at all observations.
The three panels display the level set under three different angles. 
}
\label{fig::GvHD}
\end{figure}

Solution manifolds occur in many scenarios of nonparametric set estimation problems. 
One famous example is the density level set problem in which the parameter of 
interest is the (density) level set $\{x: p(x) = \lambda\}$,
and $p$ is the PDF that generates our data, and $\lambda$ is a pre-specified level. 
In this case, the smoothness theorem (Theorem~\ref{thm::SM2}) yields the same
result as \cite{chen2016density}.
Moreover, the stability theorem (Theorem~\ref{thm::LU0}) 
suggests that 
the convergence rate under the Hausdorff distance will
be the rate of estimating the density function,
which is consistent with several existing works \cite{Cadre2006,rinaldo2010generalized,rinaldo2012stability}. 

The methods developed in Section~\ref{sec::MCGD}
can be used to find points on the level set. 
As an illustration, Figure~\ref{fig::GvHD} shows an example
of approximating the level set using Algorithm~\ref{alg::alg}
with the Graft-versus-Host Disease (GvHD) data \cite{brinkman2007high} from \texttt{mclust} package in \texttt{R} \cite{baudry2010combining}. 
We use variables \texttt{CD3}, \texttt{CD4}, and \texttt{CD8b}
and focus on the control group.
The density is computed using a Gaussian kernel density estimator with an equal bandwidth $h=20$
in all coordinates. 
We choose the level as the $25\%$ quantile of all observations' densities. 
The three panels display the approximated level sets from
three angles. 
The three surfaces in the data indicate three connected components in the regions where the density is above this threshold.


In addition to the level set problem, 
the density ridges \cite{chen2015asymptotic,genovese2014nonparametric} are also examples of solution manifolds. 
The density $k$-ridges are the collection $\{x: V_k(x)^T \nabla p(x) = 0, \lambda_{k+1}(x)<0\}$,
where $V_k(x) = [v_{k+1}(x),\cdots, v_d(x)]$ denotes the matrix of $d-k$ eigenvectors of $\nabla\nabla p(x)$
corresponding to the smallest eigenvalues and $\lambda_k(x)$ is the $k$-th largest eigenvalue.
If we only pick the lowest $d-k$ eigenvectors,
we obtain a system of equations with $d-k$ equations, leading to a $k$-dimensional manifold
(under smoothness conditions). 
The stability theorem (Theorem~\ref{thm::LU0}) states that the convergence rate of a density ridge estimator will
be at the rate of estimating the Hessian matrix, which is consistent with the findings of \cite{genovese2014nonparametric}.

Note that the density local modes can be viewed as $0$-dimensional ridges.
In this special case, the matrix $V_1(x)$ is full rank under assumption (F);
thus,
$$
\{x: V_0(x)^T \nabla p(x) = 0, \lambda_{1}(x)<0\} = \{x:  \nabla p(x) = 0, \lambda_{1}(x)<0\}. 
$$
As a result, the function $\Psi$ that generates the local modes does not involve the Hessian matrix,
leading to the convergence rate of the mode estimator to be the same as the gradient estimation rate
rather than the rate of estimating the Hessian.


\section{Discussion}	\label{sec::discuss}

In this paper, we investigate both geometric and algorithmic properties of solution manifolds.
While the solution manifolds may seem to be abstract,
we  showed that they appear in various statistical problems including
missing data, algorithmic fairness, likelihood inference, and nonparametric set estimation. 
Hence, the methodologies and theories developed in this paper
provide a generic framework for analyzing all these problems.
This framework may inform us of the hidden relation among
all these seemingly different statistical problems. 
In what follows, we discuss some relevant topics to the solution manifolds.


%
%


\subsection{Smoothness, stability, and convergence of gradient flow}	\label{sec::compare}

We developed 5 major theoretical results:
smoothness theorem (Theorem~\ref{thm::SM2}), stability theorem (Theorem~\ref{thm::LU0}),
gradient flow theorem (Theorem~\ref{thm::GD}), local center manifold theorem (Theorem~\ref{thm::AMT}),
and algorithmic convergence theorem (Theorem~\ref{thm::alg}).
These results characterize different properties of  solution manifolds and
are often studied in various 
fields.
In our work, we showed that they all rely on a similar set of assumptions:
(D), the smoothness of $\Psi$, and (F), the curvature assumption of $\Psi$ around $M$.

Assumption (D) is more than enough for some theoretical results.
The smoothness theorem can be relaxed to assuming that $\Psi$
satisfies $\beta$-H{\"o}lder condition with various $\beta$.
For instance,
the (D-1) condition in
the weak stability result (Proposition~\ref{prop::sta}) can be relaxed to the
$1$-H{\"o}lder condition.
The condition (D-2) in the smoothness theorem (Theorem~\ref{thm::SM2}) can be replaced by the 
$2$-H{\"o}lder condition.

Moreover,
we observe
a hierarchy of smoothness corresponding to different theoretical results.
If we only have (D-1), the first order derivatives, we  have a weak stability result from Proposition~\ref{prop::sta}. 
If we make a further assumption (D-2),
we have a stability theorem (Theorem~\ref{thm::LU0}), a characterization of smoothness (Theorem~\ref{thm::SM2}),
and an algorithmic convergence (Theorem~\ref{thm::alg}).
Under an additional assumption (D-3),
we can derive an even stronger result of the corresponding gradient flow (Theorem~\ref{thm::GD} and \ref{thm::AMT})



%

\subsection{Connections to other fields}
We would like to point out that the results of this paper have
several connections to other fields.

\begin{itemize}
\item {\bf Econometrics.}
Solution manifolds occur in the partial identification problem (Section~\ref{sec::PI});
hence, our analysis provides some insights into the moment equality constraint problem \cite{chernozhukov2007estimation}. 
Our analysis on the gradient descent (e.g., Theorem~\ref{thm::GD})
can be applied to investigate the property
of the minimization problem in the generalized method of moments approach \cite{hansen1982large, hansen1982generalized}.

\item {\bf Dynamical systems.}
As mentioned before, Theorem~\ref{thm::AMT} is
related to the stable manifold theorem and the local center manifold theorem in 
dynamical systems \cite{mcgehee1973stable,mcgehee1996new,banyaga2013lectures,perko2013differential}. 
Our analysis provides statistical examples that
these theorems may be useful in data analysis.

%
%

\item {\bf Computational geometry.}
If we stop the gradient descent process early, 
we do not obtain points that are on the manifold. 
The resulting points $Z_1,\cdots, Z_n$ may be viewed as from
$Z_i  = X_i+\epsilon_i$, where $X_i\in M$ is from a distribution over the manifold
and $\epsilon_i$ is some additive noise. 
This model is a common additive noise model in the computational geometry literature
\cite{Cheng2005,cheng2005manifold,dey2006curve,dey2006provable,chazal2008smooth,boissonnat2014manifold}.
Our proposed method provides another concrete example of the manifold additive noise model.
\item {\bf Optimization.}
In general, a gradient descent method has a linear convergence rate
when the objective function is strongly convex and has a smooth gradient \cite{boyd2004convex,nesterov2018lectures}.
However,  in our setting,
the objective function $f(x)$ is non-convex (and is not locally convex),
but the gradient descent algorithm still
obtains a linear (algorithmic) convergence rate (Theorem~\ref{thm::GD}). 
This  reveals a class of non-convex objective functions
that can be minimized quickly using a gradient descent algorithm.

\end{itemize}

\subsection{Future work}

The framework developed in this paper has many potentials in other problems. 
We provide some possible directions that we plan to pursue in the future. 

\begin{itemize}

\item {\bf Log-linear model.}
The log-linear model of categorical variables is 
an interesting example
in the sense that it can be expressed as a solution manifold when there are constraints like conditional independence
but it may be unnecessary to use the developed techniques.
Consider a $d$-dimensional categorical random vector $X$ that takes values in $\{0,1,2,\cdots, J-1\}^d$.
The joint PMF of $X$ $p(x_1,\cdots,x_d)$ has $J^d$ entries with the constraint that $\sum_{x} p(x_1,\cdots,x_d) = 1$,
so it has $J^d-1$ degrees of freedom.
In the log-linear model, we reparametrize the PMF using the log-linear expansion:
$\log p(x) = \sum_{A} \psi_A(x_A)$, where $A $ is any non-empty subset of $\{1,2,\cdots, J\}$ and $x_A = (x_j: j\in A)$
with the constraint that $\psi_A(x_A)=0$ if any $x_j = 0$ for $j\in A$. 
Under the log-linear model, 
we reparametrize the joint PMF using the parameters $\Theta_{\sf  LL} = \{\psi_A(x_A): A\subset  \{1,2,\cdots, J\}, x_A\in\{0,1\}^{|A|}\}$,
where $|A|$ is the cardinality of $A$.
The feasible parameters in $\Theta_{\sf  LL} $ forms a solution manifold due to the aforementioned constraints.
However, 
common constraints in the log-linear model are that interaction terms $\psi_A=0$ for some $A$.
This leads to a flat manifold; hence, there is no need to use the developed technique.
We may need to use techniques from the solution manifold when the constraint is placed on the PMF $p(x_1,\cdots, x_d)$ rather than the log-linear models
because the constraints on the PMF lead to an implicit constraint on $\Theta_{\sf  LL} $. 
We leave this as future work.

\item {\bf Confidence regions of solution manifolds.}
Another future direction 
is to develop a method
for constructing the confidence regions of solution manifolds. 
There are two common approaches
to construct a confidence region of a set. 
The first one is based on the ``vertical uncertainty'', which is the uncertainty due to $\hat \Psi- \Psi$. 
This idea has been applied in generalized method of moment problems \cite{chernozhukov2007estimation, romano2010inference, chen2018monte}
and
level set estimations \cite{mammen2013confidence,qiao2019nonparametric,cheng2019nonparametric}
The other approach is based on the ``horizontal uncertainty'', which is the uncertainty due to ${\sf Haus}(\hat M, M)$. 
This technique has been used in constructing confidence sets of density ridges and level sets \cite{chen2015asymptotic,chen2016density}. 
Based on these results, we believe that
it is possible to develop a procedure for constructing
the confidence regions of solution manifolds.
We leave this as future work. 


\item {\bf A new class of non-convex problems.}
We observe an interesting phenomenon in Theorem~\ref{thm::alg}. Although the objective function $f(x) = \Psi^T \Lambda \Psi(x)$
is non-convex, we still obtain a linear (algorithmic) convergence.
Note that for a non-convex but locally convex around the minimizer,
the linear convergence can be established via assuming a local strong convexity of the objective function \cite{balakrishnan2017statistical}, i.e.,
$f(x)$ is strongly convex within $B(x^*, r)$ for some radius $r>0$ and $x^*$ is  the global minimizer.
However, our problem is more complicated in the sense that $f(x)$ is flat along $M$,
so it is not locally strongly convex. 
The key element in our result is assumption (F) stating that
$f(x)$ behaves like being ``locally strongly-convex'' in the normal direction of $M$.
Thus, with some additional structure on the non-convex function, we may still obtain a fast convergence. 
We will investigate how this may be useful in other non-convex optimization problems. 
In addition, the
analysis may be applied to other forms of $f(x)$ that are not limited to a ``squared''-type 
transformation of $\Psi(x)$ ($f(x)$ behaves like the square of $\Psi$), which may further improve the convergence rate. 
For instance, the gradient descent over $f_1(x) = \|\Psi(x)\|_1$  may also converge faster than over the function $f(x)$. 
We will investigate this in the future.

\end{itemize}


\appendix

\section{Bayesian inference}	\label{sec::bayesian}

The techniques we developed for solution manifolds
can be used for the Bayesian inference 
after some modifications.
One example is the univariate Gaussian with unknown mean $\mu$ and variance $\sigma^2$,
and a second moment constraint.
The parameter space is $\Theta(s_0) = \{(\mu,\sigma^2): \E(Y^2)=  \mu^2+\sigma^2 = s^2_0\}$. 
We 
place a prior $\pi(\theta)$ over $\Theta(s_0)$ 
that reflects our prior belief about the parameter $\theta = (\mu,\sigma)$.
However, 
how to sample from $\pi$ (and the posterior) is a non-trivial task
because $\pi$ is supported on a manifold.

The Monte Carlo approximation method in Section~\ref{sec::MCGD}
offers a solution to sampling from $\pi$. 
With a little modification of Algorithm~\ref{alg::alg},
we can approximate the posterior distribution defined on the solution manifold. 
Let $\pi$ be a prior PDF defined over the solution set $M = \{\theta: \Psi(\theta)= 0 \}$
where $\Psi:\Theta\mapsto\R^k$.
We observe IID observations $X_1,\cdots, X_n$
that are assumed to be from a parametric model $p(x|\theta)$. 
The posterior distribution of $\theta$
will be 
$$
\pi(\theta|X_1,\cdots,X_n)\propto
\begin{cases}
\pi(\theta) \prod_{i=1}^n p(X_i|\theta),&\quad \mbox{if $\theta \in M$;}\\
0,&\quad \mbox{if $\theta \notin M$.}
\end{cases}
$$
We propose a method that approximates the posterior distribution
using a weighted point cloud. 
Our approach is formally described
in Algorithm~\ref{alg::alg::AMP}. 
Note that the algorithm we develop
only requires the ability to evaluate a function $\rho(\theta) \propto \pi(\theta)$.
We do not need the exact value of the prior density.

\begin{example}[Bayesian analysis of Example~\ref{ex::Gaussian}]
Figure~\ref{fig::ex02_05} shows an example
of 90\% credible intervals and the MAPs
under three scenarios: prior distribution only (left panel),
posterior distribution with $n=100$ (middle panel),
and posterior distribution with $n=1000$ (right panel). 
This is the same setting in Example~\ref{ex::Gaussian} and Figure~\ref{fig::ex02_04},
where the manifold is formed by the constraint $P(-5<X<2) = 0.5$ with $X\sim N(\mu,\sigma)$.
We choose the prior distribution (density) as 
$$
\pi(\mu,\sigma)\propto \phi(\mu; 2,0.2)\phi(\sigma;2.5, 0.2) I((\mu,\sigma)\in M),
$$
where $\phi(x;a,b)$ is the density of $N(a,b^2)$.
In the left panel, the credible interval is completely determined by the prior distribution
and the MAP is the mode of the prior. 
In the middle and right panels, the data are incorporated into the posterior distributions.
Both the credible intervals and MAPs are changing because of the influence of the data.
Our method (Algorithm~\ref{alg::alg::AMP} and equation \eqref{eq::CI1}) provides a simple and elegant approach 
of approximating the credible intervals 
on the manifold.
\end{example}

\begin{algorithm}[tb]
\caption{Approximated manifold posterior algorithm} 
\label{alg::alg::AMP}
\begin{algorithmic}
\State 1. Apply Algorithm~\ref{alg::alg} 
to generate many points $Z_1,\cdots, Z_N\in M$. 

\State 2. Estimate a density score of $Z_i$
using 
$$
\hat\rho_{i,N} = \frac{1}{N} \sum_{j=1}^N K\left(\frac{\|Z_i-Z_j\|}{h}\right),
$$
where $h>0$ is a tuning parameter and $K$ is a smooth function such as a Gaussian.

\State 3. Compute the posterior density score of $Z_i$ as
\begin{equation}
\hat \omega_{i,N} = \frac{1}{\hat\rho_{i,N}}\cdot \hat \pi_{i,N},\quad \hat\pi_{i,N} =  \pi(Z_i)\cdot  \prod_{j=1}^n p(X_j|Z_i),
\label{eq::PDS}
\end{equation}
\Return Weighted point clouds $(Z_1,\hat \omega_{i,N}),\cdots, (Z_N,\hat \omega_{N,N})$.

\end{algorithmic}
\end{algorithm}

\begin{figure}
\center
\includegraphics[height=1.5in]{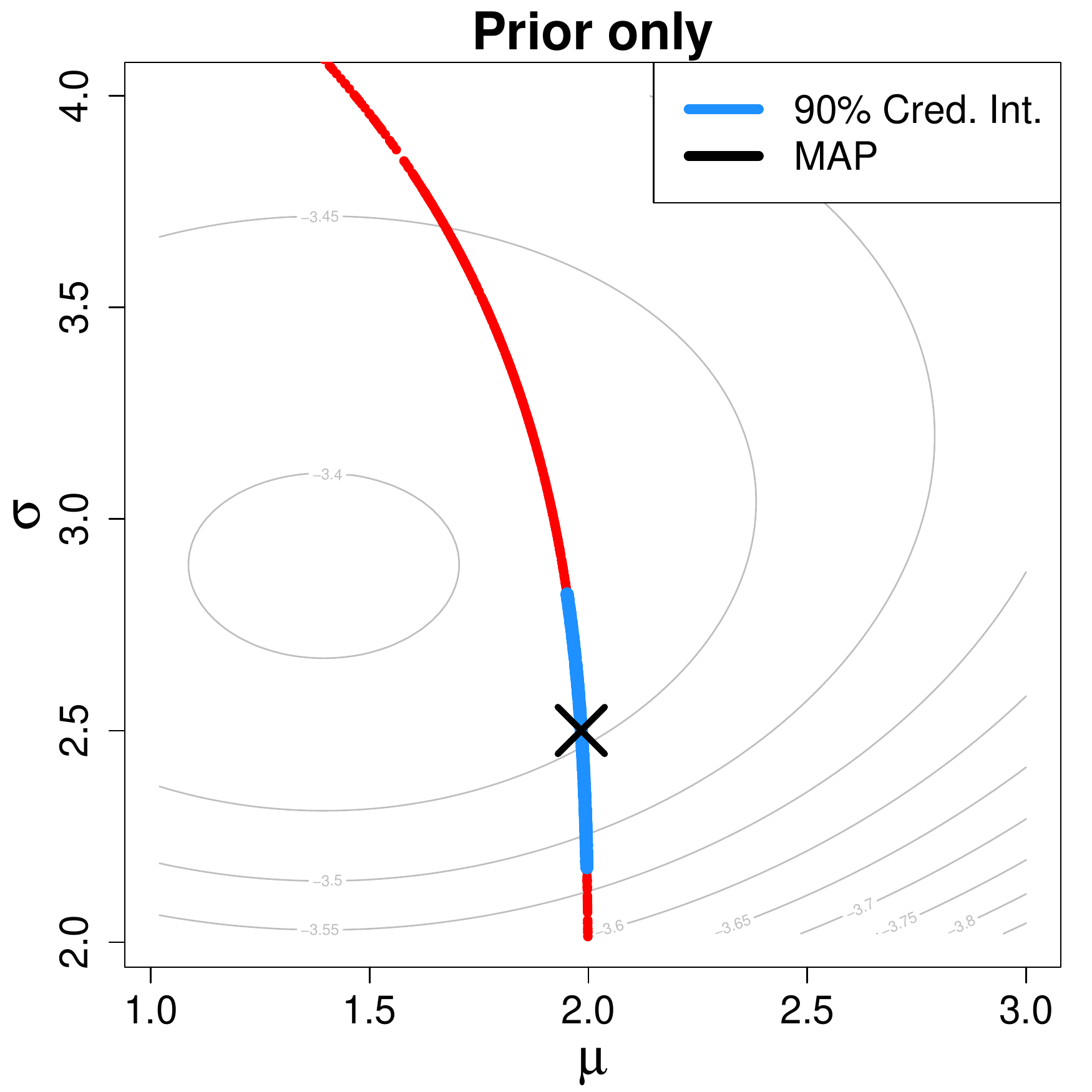}
\includegraphics[height=1.5in]{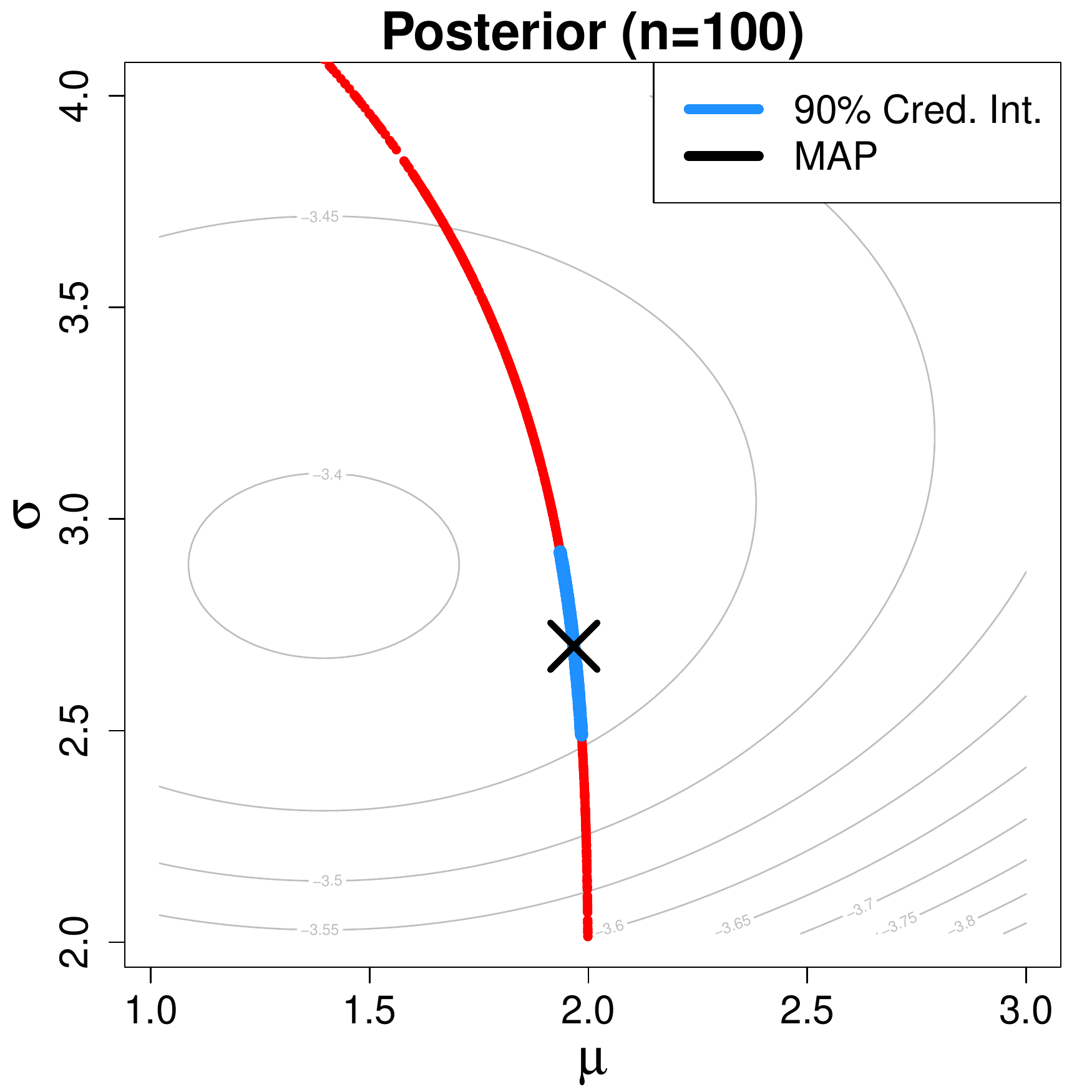}
\includegraphics[height=1.5in]{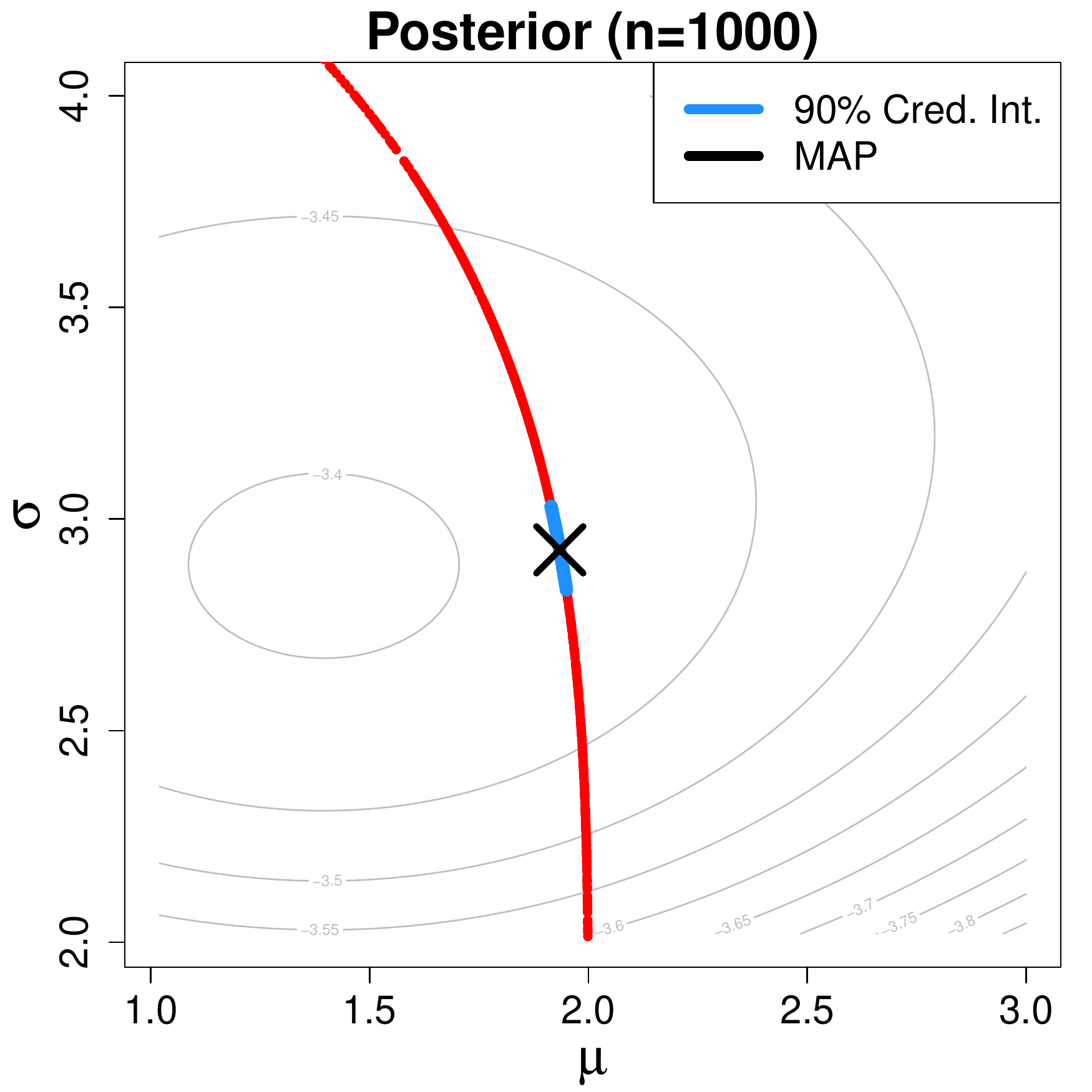}
\caption{An example showing the credible interval (credible region) and MAP
on the manifold. We use the same example in Figure \ref{fig::ex02_04}
and the prior distribution $\pi(\mu,\sigma)\propto \phi(\mu; 2,0.2)\phi(\sigma;2.5, 0.2) I((\mu,\sigma)\in M)$,
where $\phi(x;a,b)$ is the density of $N(a,b^2)$.
{\bf Left:}  90\% credible interval along with the MAP using only the prior distribution.
{\bf Middle:} we randomly generate $n=100$ observations from $N(1.5, 3^2)$ and compute
90\% credible interval and MAP from the posterior distribution.
{\bf Right:} the same analysis as the middle panel but now we use a sample of size $n=1000$.
Note that the background gray contours show the log-likelihood function (as an indication of how the likelihood function
will influence posterior).
}
\label{fig::ex02_05}
\end{figure}

To see why the outputs from Algorithm~\ref{alg::alg::AMP}
are a valid
approximation to the posterior density, 
note that the density score $\hat\rho_{i,N} $
is proportional to the underlying density of $Z_1,\cdots, Z_M$
defined over $M$.
Hence, the weighted point cloud $(Z_1,\hat\rho_{i,N}^{-1}),\cdots, (Z_N,\hat\rho_{N,N}^{-1})$
behaves like a uniform sample over $M$. 
Thus, to account for the unweighted point cloud density,
we have to rescale the posterior score of $Z_i$ in equation \eqref{eq::PDS}
by the factor $\hat\rho_{i,N}^{-1}$.
Note that the value $\hat\pi_{i,N}$ is proportional to the posterior density $\pi(Z_i|X_1,\cdots,X_n)$
evaluated at point $\theta = Z_i$.
The quantity $h$ is the smoothing bandwidth in the kernel density estimation. 
Because this is a density estimation problem, we would recommend to choose it using
the Silverman's rule of thumb \cite{silverman1986density} or other popular approaches such as 
least square cross-validation \cite{rudemo1982empirical, bowman1984alternative}; see the review paper
of \cite{sheather2004density} for a list of reliable methods.

With the output from Algorithm~\ref{alg::alg::AMP},
the posterior density $\pi(\theta|X_1,\cdots,X_n)$
is represented by 
the collection of points $Z_1,\cdots, Z_N$
along with the corresponding weights $\hat \omega_{1,N},\cdots, \hat \omega_{N,N}$. 
The posterior mean can be approximated using
$$
\hat\theta_{\sf Pmean}= \frac{\sum_{i=1}^n \hat \omega_{i,N} Z_i}{\sum_{i=1}^n \hat \omega_{i,N}}.
$$
This estimator is essentially the importance sampling estimator. 
The posterior mode (MAP: maximum a posteriori) can be approximated using 
$$
\hat \theta_{\sf MAP} = Z_{i^*},\quad i^* =  {\sf argmax}_{i\in \{1,\cdots, N\}} \hat\pi_{i,N}.
$$

The weighted point cloud also leads to an approximated credible region.
Let $1-\alpha$ be the credible level
and $Z_{(1)},\cdots, Z_{(N)}$ be the ordered points such that 
$$
\hat\pi_{(1),N}\geq \hat\pi_{(2),N}\geq \cdots\geq \hat\pi_{(N),N}.
$$
Define 
\begin{equation}
i(\alpha) = {\sf argmin}\left\{i: \frac{\sum_{j=1}^i \hat \omega_{(j),N}}{\sum_{\ell=1}^N \hat \omega_{(\ell),N}}\geq 1-\alpha\right\}.
\label{eq::ialpha}
\end{equation}
Then we may use the collection of points 
\begin{equation}
\{Z_{(1)},\cdots ,Z_{(i(\alpha))}\}
\label{eq::CI1}
\end{equation}
as an approximation of a $1-\alpha$ credible region. 
Alternatively,
one may use the set
$$
\{\theta\in M: \pi(\theta|X_1,\cdots,X_n)\geq \pi(Z_{i(\alpha)}|X_1,\cdots, X_n)\}
$$
as another approximation of a $1-\alpha$ credible region.

Here is an explanation of the choice in equation \eqref{eq::ialpha}. 
$\hat \pi_{i,N}$ is proportional to the posterior value at $Z_i$;
hence, 
$$
\pi(Z_{(1)}|X_1,\cdots,X_n)\geq \pi(Z_{(2)}|X_1,\cdots,X_n)\geq \cdots \geq \pi(Z_{(N)}|X_1,\cdots,X_n).
$$
Define the upper-level set of level $\lambda$ of the posterior distribution as 
$$
L(\lambda) = \{\theta: \pi(\theta|X_1,\cdots,X_n)\geq \lambda\}.
$$
The posterior probability within $L(\lambda)$ is 
$$
\pi(L(\lambda)|X_1,\cdots, X_n) = \int I(\theta\in L(\lambda)) \pi(\theta|X_1,\cdots,X_n) d\theta.
$$
A $1-\alpha$ credible region can be constructed by choosing the minimal value $\lambda_\alpha$ such that 
\begin{equation}
\lambda_\alpha = \inf\left\{\lambda: \pi(L(\lambda)|X_1,\cdots, X_n)  \geq 1-\alpha\right\}.
\label{eq::level}
\end{equation}
With the weights $\hat \omega_{1,N},\cdots,\hat \omega_{N,N}$, 
an approximation to $\pi(L(\lambda)|X_1,\cdots, X_n)$ 
is 
$$
\hat \pi(L(\lambda)|X_1,\cdots, X_n) = \frac{\sum_{j=1}^n \hat \omega_{j,N} I(Z_j \in L(\lambda))}{\sum_{\ell=1}^n \hat \omega_{\ell,N}}.
$$
The posterior levels $\hat \pi_{1,N},\cdots, \hat \pi_{N,N}$ form a discrete approximation of all levels of $\lambda$.
Thus, an approximation to equation \eqref{eq::level} is 
\begin{align*}
\hat \lambda_\alpha &= \min \left\{\hat \pi_{i,N}: \frac{\sum_{j=1}^n \hat \omega_{j,N} I(Z_j \in L(\hat \pi_{i,N}))}{\sum_{\ell=1}^n \hat \omega_{\ell,N}}\geq 1-\alpha\right\}\\
& = \min \left\{\hat \pi_{i,N}:\frac{\sum_{j=1}^n \hat \omega_{j,N} I(\hat \pi_{j,N}\geq\hat \pi_{i,N})}{\sum_{\ell=1}^n \hat \omega_{\ell,N}}\geq 1-\alpha\right\}\\
& = \min \left\{\hat \pi_{(i),N}:\frac{\sum_{j=1}^i \hat \omega_{(j),N} }{\sum_{\ell=1}^n \hat \omega_{(\ell),N}}\geq 1-\alpha\right\}\\
& = \hat \pi_{(i(\alpha)),N},\quad i(\alpha) = {\sf argmin}\left\{i: \frac{\sum_{j=1}^i \hat \omega_{(j),N}}{\sum_{\ell=1}^N \hat \omega_{(\ell),N}}\geq 1-\alpha\right\}.
\end{align*}
The choice in equation \eqref{eq::ialpha} is from the above approximation to the level $\lambda_\alpha$. 


\begin{remark}
Note that the posterior mean may not be on the manifold.
One may replace it by the posterior \emph{Fr{\'e}chet mean} \cite{grove1973conjugatec}
defined as 
$$
\hat\theta_{\sf PFmean} = Z_{i^\dagger},\quad i^\dagger = {\sf argmin}_{i\in \{1,\cdots, N\}} \sum_{j=1}^N \hat\omega_{j,N} (Z_i-Z_j)^2.
$$
The Fr{\'e}chet mean defines a mean of a random variable $X$ using 
the minimization problem ${\sf argmin}_\mu E(X-\mu)^2$
and constraints the minimizer to be in the manifold. 
Here, we use the weighted point approximation to this minimization. 
\end{remark}

\section{Derivation of equation (2)}	\label{sec::fair}

To derive the constraint in equation \eqref{eq::fair2} from equation \eqref{eq::fair},
we expand the first term and consider $s=1$:
\begin{align*}
P(Y=1|Q=1,A=0) & = \frac{P(Y=1,Q=1|A=0)}{P(Q=1|A=0)}\\
& = \frac{\sum_wP(Y=1,Q=1, W=w|A=0)}{\sum_{w'}P(Q=1, W=w'|A=0)}\\
& \overset{(*)}{=} \frac{\sum_wP(Q=1| W=w,A=0) P(W=w,Y=1|A=0)}{\sum_{w'}P(Q=1| W=w',A=0)P(W=w'|A=0)}\\
& = \frac{\sum_w q_{w,0}P(W=w,Y=1|A=0)}{\sum_{w'}q_{w',0}P(W=w'|A=0)}.
\end{align*}
Note that the equality labeled with (*) is due to $Q\perp Y|A,W$.
The two probabilities $P(W=w,Y=1|A=0)$ and $P(W=w'|A=0)$ are identifiable from the data.
A similar calculation shows that 
$$
P(Y=1|Q=1,A=1)  = \frac{\sum_w q_{w,1}P(W=w,Y=1|A=1)}{\sum_{w'}q_{w',1}P(W=w'|A=1)}.
$$
So the test fairness constraint in equation \eqref{eq::fair2} requires
\begin{equation*}
\frac{\sum_w q_{w,0}P(W=w,Y=1|A=0)}{\sum_{w'}q_{w',0}P(W=w'|A=0)} = \frac{\sum_w q_{w,1}P(W=w,Y=1|A=1)}{\sum_{w'}q_{w',1}P(W=w'|A=1)}.
\label{eq::test1}
\end{equation*}
Also, for the case of $s=0$, the above constraint becomes
\begin{equation*}
\frac{\sum_w (1-q_{w,0})P(W=w,Y=1|A=0)}{\sum_{w'}(1-q_{w',0})P(W=w'|A=0)} = \frac{\sum_w (1-q_{w,1})P(W=w,Y=1|A=1)}{\sum_{w'}(1-q_{w',1})P(W=w'|A=1)}.
\label{eq::test2}
\end{equation*}
The above two equations are  what equation \eqref{eq::fair2} refers to as.

\section{Proofs}

%
%
%
%

\begin{proof}[Proof of Lemma~\ref{lem::relax}]
Essentially, we only need to show that when $d(x,M)\leq \frac{3\lambda_M^2}{8\|\Psi\|^*_{\infty,1}\|\Psi\|^*_{\infty,2}}$,
the minimal eigenvalue $\lambda_{\min}(G_{\Psi}(x)G_{\Psi}(x)^T)\geq \frac{1}{4}\lambda_{M}^2$.

For any point $x$ with $d(x,M)\leq \frac{3\lambda_M^2}{8\|\Psi\|^*_{\infty,1}\|\Psi\|^*_{\infty,2}}$, 
let $x_M$ be the projection on $M$. 
The minimal eigenvalue
\begin{equation}
\begin{aligned}
\lambda_{\min}(G_{\Psi}(x)G_{\Psi}(x)^T) &= 
\lambda_{\min}(G_{\Psi}(x_M)G_{\Psi}(x_M)^T) \\
&\quad+ (\lambda_{\min}(G_{\Psi}(x)G_{\Psi}(x)^T) - \lambda_{\min}(G_{\Psi}(x_M)G_{\Psi}(x_M)^T))\\
&\geq \lambda_M^2 - |\lambda_{\min}(G_{\Psi}(x)G_{\Psi}(x)^T) - \lambda_{\min}(G_{\Psi}(x_M)G_{\Psi}(x_M)^T)|.
\end{aligned}
\label{eq::relax::1}
\end{equation}
The Weyl's theorem (see, e.g., Theorem 4.3.1 of \cite{horn2012matrix}) shows that the eigenvalue difference can be bounded via
\begin{align*}
|\lambda_{\min}(G_{\Psi}(x)G_{\Psi}(x)^T) &- \lambda_{\min}(G_{\Psi}(x_M)G_{\Psi}(x_M)^T)|\\
& \leq \|G_{\Psi}(x)G_{\Psi}(x)^T-G_{\Psi}(x_M)G_{\Psi}(x_M)^T\|_{\max}\\
&\leq 2\|\Psi\|^*_{\infty, 1}  \|\Psi\|^*_{\infty, 2}\|x-x_M\|\quad \mbox{(by Taylor's theorem)}\\
&= 2\|\Psi\|^*_{\infty, 1}  \|\Psi\|^*_{\infty, 2}d(x,M).
\end{align*}
Thus, as long as 
$$
2\|\Psi\|^*_{\infty, 1}  \|\Psi\|^*_{\infty, 2}d(x,M)\leq \frac{3}{4} \lambda_M^2
$$
we have $\lambda_{\min}(G_{\Psi}(x)G_{\Psi}(x)^T) \geq \frac{1}{4}\lambda_M^2$,
which completes the proof.

\end{proof}

\begin{proof}[Proof of Theorem~\ref{thm::SM2}]
The proof is modified from the proof of Lemma 4.11 and Theorem 4.12 in \cite{federer1959curvature}. 

Let $r_0 = \min\{\frac{\delta_0}{2}, \frac{\lambda_0}{\norm{\Psi}^*_{\infty, 2}}\}$. 
We will show that $r_0$ is a lower bound of reach$(M)$.
We proceed by the proof of contradiction.

Suppose that the conclusion is incorrect that the reach is less than $r_0$.
Then there exists a point
$x$ such that $d(x,M)< r_0$
and $x$ has two projections onto $M$, denoted as $b,c\in M$. 


Since $b,c\in M$, $\Psi(b)=\Psi(c)=0$ and by Taylor's  remainder theorem and condition (D-2) and (F),
\begin{equation}
\begin{aligned}
\norm{G_\Psi(b)(b-c)}_2& = \norm{f(b)-f(c)- G_\Psi(b)(b-c)}_2\\
&\leq \frac{1}{2}\norm{b-c}^2_2\|\Psi\|^*_{\infty,2}.
\end{aligned}
\label{eq::SM2pf1}
\end{equation}

By the nature of projection, we can find a vector $t_b\in\mathbb{R}^s$ such that $x-b=t_b^TG_{\Psi}(b)$
because the normal space is spanned by the row space of  $G_{\Psi}(b)$ (Lemma~\ref{lem::normal}).
Together with \eqref{eq::SM2pf1}, this implies
\begin{equation}
\begin{aligned}
2|(x-b)^T(b-c)|&=2|t_b^TG_{\Psi}(b)(b-c)|\\
&\leq \norm{G_\Psi(b)(b-c)}_2 \norm{t_b}_2\\
&\leq \|\Psi\|^*_{\infty,2}\norm{b-c}^2_2\norm{t_b}_2.
\label{eq::SM2pf2}
\end{aligned}
\end{equation}

Since $b,c$ are projections of $x$ onto $M$,
\begin{equation*}
\norm{x-b}_2=\norm{x-c}_2.
\end{equation*}
As a result,
\begin{equation}
\begin{aligned}
0& = \norm{x-c}^2_2-\norm{x-b}^2_2\\
& = \norm{b-c}^2_2+2 (b-c)^T(x-b)\\
&\geq \norm{b-c}^2_2-\|\Psi\|^*_{\infty,2}\norm{b-c}^2_2\norm{t_b}_2 \quad\mbox{\eqref{eq::SM2pf2}}\\
& = \norm{b-c}^2_2(1-\|\Psi\|^*_{\infty,2}\norm{t_b}_2).
\label{eq::SM2pf3}
\end{aligned}
\end{equation}

However, 
starting from the definition of $r_0$, we have
\begin{equation}
\begin{aligned}
\frac{\lambda_0}{\|\Psi\|^*_{\infty,2}}> r_0\geq \norm{x-b}_2&= \norm{t^T_bG_\Psi(b)}_2\\
&\underbrace{\geq}_{(F)} \lambda_0 \norm{t_b}_2.
\label{eq::SM2pf4}
\end{aligned}
\end{equation}
As a result,
$$
\|\Psi\|^*_{\infty,2}\norm{t_b}_2 < 1
$$
so 
$\norm{b-c}^2_2=0$ by \eqref{eq::SM2pf3},
which implies $b=c$, a contradiction.
Accordingly, $x$  must have a unique projection so the reach has a lower bound $r_0$.

\end{proof}

\begin{proof}[Proof of Proposittion \ref{prop::sta}]

Consider a point $x \in \tilde{M}$.
By the condition $\norm{\tilde{\Psi}-\Psi}^*_{\infty, 0}<c_0$
and assumption (F), we know that $d(x, M)\leq \delta_0$,
where $c_0$ and $\delta_0$ are the constants in (F). 

Define
\begin{equation}
h(x) =\norm{\Psi(x)}_{2} = \sqrt{\Psi(x)^T \Psi(x)}
\end{equation}
to be the $L_2$ norm for $\Psi$. The derivative of $h(x)$
\begin{equation}
\nabla h(x) = \frac{\Psi(x)^TG_\Psi(x)}{\norm{\Psi(x)}_2} 
\end{equation}
is a vector of $\mathbb{R}^d$. Note that  $G_\Psi(x) = \nabla \Psi(x)\in\R^{s\times d}$ is the Jacobian.

For any point $x\in \tilde{M}$, we define a flow 
\begin{equation}
\phi_x: \mathbb{R} \mapsto \mathbb{R}^d
\end{equation}
such that
\begin{equation}
\begin{aligned}
\phi_x(0) &= x,&\frac{\partial}{\partial t}\phi_x(t) =& -\nabla h(\phi(t)).
\end{aligned}
\end{equation}
Later we will prove in Theorem~\ref{thm::GD} that
$\phi_x(\infty) \in M$ when $x\in M\oplus \delta_c$,
where $\delta_c$ is defined in Theorem~\ref{thm::GD}.

By Theorem 3.39 in \cite{Irwin1980}, $\phi_x(t)$ is uniquely defined since the gradient $\nabla h(x)$ is well-defined for all $x\notin M$. 
We define an arc-length flow (i.e., a constant velocity flow) based on $\phi_x$:
\begin{equation}
\begin{aligned}
\gamma_x(0) &= x,&\frac{\partial}{\partial t}\gamma_x(t) =& -\frac{\nabla h(\gamma_x(t))}{\norm{\nabla h(\gamma_x(t))}_2}.
\end{aligned}
\label{eq::unitG}
\end{equation}
The time traveled in this flow is the same as the distance traveled (due to the velocity being a unit vector).
Let $T_x = \inf\{t>0: \gamma_x(t) \in M\}$ be the terminal time point
and let $\gamma_x(T_x)\in M$ as the endpoint of the flow. 
This means that $T_x$ is the length of the flow from $x$ to the destination on $M$. 
The goal is to bound $T_x$ since the length must be greater or equal to the projection distance
for $x\in \tilde M$. 

We define $\xi_x(t) = h(\gamma_x(t))-h(\gamma_x(T_x)) = h(\gamma_x(t))$.
Differentiating $\xi_x(t)$ with respect to $t$ leads to
\begin{equation}
\begin{aligned}
\xi'_x(t) &= -\frac{d}{dt} h(\gamma_x(t))\\
& = -[\nabla h(\gamma_x(t))]^T \frac{d}{dt}\gamma_x(t)\\
&= - \|\nabla h(\gamma_x(t))\|\\
& = -\frac{\norm{\Psi(\gamma_x(t))^TG_\Psi(\gamma_x(t))}_2}{\norm{\Psi(\gamma_x(t))}_{2}}\\
&\leq -\lambda_{\min}(G_\Psi(\gamma_x(t))G_\Psi(\gamma_x(t))^T)\\
&\leq -\lambda_0
\end{aligned}
\label{eq::solM::pf1}
\end{equation}
because $\gamma_x(t)\in M\oplus \delta_0$ for all $t$.

Let $\epsilon_0 = \norm{\Psi-\tilde{\Psi}}^{*}_{\infty,0}=\sup_x\|\Psi(x)-\tilde\Psi(x)\|_{\max}$ and recall that $x\in \tilde{M}$ so $\tilde \Psi(x) = 0$. Then by the fact that 
$\norm{v}_2\leq \sqrt{d}\times \norm{v}_{\max}$ for vector $v$,
\begin{align*}
\sqrt{d}\cdot \epsilon_0&  =\sqrt{d}\sup_x\|\Psi(x)-\tilde\Psi(x)\|_{\max}\\
&  \geq\sup_x\|\Psi(x)-\tilde\Psi(x)\|\\
&\geq \|\Psi(x)-\tilde\Psi(x)\|\\
&\geq h(x)\\
& =h(\gamma_x(0))-h(\gamma_x(T_x))\quad \mbox{(since $h(\gamma_x(T_x))=0$)}\\
& =\xi(0)-\xi(T_x)\quad \mbox{($\xi(T_x)=0$ and $\xi(0)=h(0)$)}\\
& = -T_x\xi'(T_x^*)\quad \mbox{(mean value Theorem)}\\
&\geq T_x\lambda_0\quad \mbox{by equation \eqref{eq::solM::pf1}}.
\end{align*}
Hence, $T_x\leq\frac{\sqrt{d}}{\lambda_0}\epsilon_0 = O(\epsilon_0)$ which is independent of $x$. This implies that 
$$
\sup_{x\in\tilde{M}} d(x,M)\leq \frac{\sqrt{d}}{\lambda_0}\epsilon_0 = O(\|\tilde \Psi-\Psi\|^*_{\infty,0}).
$$

\end{proof}

\begin{proof}[Proof of Theorem \ref{thm::LU0}]
\begin{itemize}
\item[1.] 
Since condition (F) involves only $\Psi$ and its derivative, when $\norm{\Psi-\tilde{\Psi}}^*_{\infty,2}$ is sufficiently small, (F) holds for $\tilde{\Psi}$.

\item[2.] 

By the first assertion, condition (F) holds for $\tilde\Psi$. 

Thus, we can exchange $\tilde{M}$ and $M$ and repeat the proof of Proposittion \ref{prop::sta}, which leads to
$$
\sup_{x\in M} d(x,\tilde{M})\leq \frac{\sqrt{d}}{\lambda_0}\epsilon_0.
$$
As a result, we conclude that $\Haus(\tilde{M},M)\leq \frac{\sqrt{d}}{\lambda_0}\epsilon_0 = O(\epsilon_0)$.

\item[3.] By Theorem~\ref{thm::SM2}, the reach of $M$ has lower bound $\min\{\delta_0/2, \lambda_0/\|\Psi\|^*_{\infty,2}\}$. Note that $\delta_0,\lambda_0$ depends on the first derivative of $\Psi$. Hence, the lower bound for reach of $M$ and $\tilde{M}$ will be bounded at rate $O(\norm{\Psi-\tilde{\Psi}}^*_{\infty,2})$.

\end{itemize}


\end{proof}

Before moving forward, we would like to note that the Jacobian and Hessian of $f$ can be expressed as 
\begin{align}
G_f(x) = \nabla f(x) &= 2\Psi(x)^T \Lambda [\nabla \Psi(x)]
\label{eq::GD::F}\\
H_f(x) = \nabla\nabla f(x) &= 2[\nabla \Psi(x)]^T \Lambda [\nabla \Psi(x)] + 2\Psi(x)\Lambda [\nabla\nabla \Psi(x)],
\label{eq::H::F}\\
\nabla\nabla\nabla f(x) &= 6[\nabla \Psi(x)]^T \Lambda [\nabla\nabla \Psi(x)] + 2\Psi(x)\Lambda [\nabla\nabla\nabla \Psi(x)],
\label{eq::T::F}
\end{align}
where $\nabla \Psi(x) \in \R^{s\times d}$ and $\nabla \nabla \Psi(x) \in \R^{s\times d\times d}$.

\begin{proof}[Proof of Lemma~\ref{lem::property2}]

{\bf Property 1 (For each $x\in M$).}

\emph{1-(a).}
By equation \eqref{eq::H::F} and the fact that $\Psi(x) = 0$ whenever $x\in M$, 
we obtain 
$$
H_f(x) = 2[\nabla \Psi(x)]^T \Lambda [\nabla \Psi(x)].
$$
Because $\Lambda$ is positive definite, it can be decomposed into $\Lambda = U D U^T$ where $D$ is a diagonal matrix 
so the eigenvectors corresponding to non-zero eigenvalues of $H_f$
will be the the rows of $U^T[\nabla \Psi(x)]$, which spans the same subspace as the row space of $[\nabla \Psi(x)]$
so by Lemma~\ref{lem::normal}, the non-zero eigenvectors spans the normal space of $M$ at $x$.

\emph{1-(b).}
Because $H_f(x) = 2[\nabla \Psi(x)]^T \Lambda [\nabla \Psi(x)]$ when $x\in M$, 
the minimal non-zero eigenvalue
\begin{align*}
\lambda_{\min,>0} (H_f(x)) = 2\lambda_{\min,>0} (G_{\Psi}(x)^T \Lambda G_{\Psi}(x)).
\end{align*}
Since $\Lambda$ is positive definite and symmetric, we can decompose 
$$
G_{\Psi}(x)^T \Lambda G_{\Psi}(x) = G_{\Psi}(x)^T \Lambda^{1/2} \Lambda^{1/2} G_{\Psi}(x)
$$
so we obtain 
\begin{align*}
\lambda_{\min,>0} (H_f(x)) &= 2\lambda_{\min,>0} (G_{\Psi}(x)^T \Lambda G_{\Psi}(x)) \\
&= 2\lambda_{\min} ( \Lambda^{1/2}G_{\Psi}(x)G_{\Psi}(x)^T  \Lambda^{1/2} )\\
&\geq 2 \Lambda_{\min} \lambda_{\min}(G_{\Psi}(x)G_{\Psi}(x)^T)\\
&\geq 2\Lambda_{\min} \lambda_0^2.
\end{align*}

\emph{1-(c).}
Because the normal space of $M$ at $x$ is spanned by the rows of $G_\Psi(x) = \nabla \Psi(x)$, 
which by 1-(a) is spanned by the non-zero eigenvectors of $H_f(x)$, 
the result follows. 

{\bf Property 2.}
Because $d(x,M)<\delta_c$ so $x$ is within the reach of $M$ and thus, $x_M$, the projection from $x$ onto $M$, is unique. 
As a result, the normal space of $x_M$, $N_M(x)$, is well-defined.
 
We can decompose 
\begin{align*}
\lambda_{\min, \perp, M}(H_f(x)) & = \min_{v\in N_M(x)}\frac{v^T H_f(x)v}{\|v\|^2}\\
& = \min_{v\in N_M(x)}\frac{v^T (H_f(x_M) + H_f(x) - H_f(x_M) )v}{\|v\|^2}\\
&\geq \min_{v\in N_M(x)}\frac{v^T H_f(x_M)v}{\|v\|^2} - \max_{v\in N_M(x)}\frac{v^T (H_f(x) - H_f(x_M))v}{\|v\|^2}\\
&\geq \min_{v\in N_M(x)}\frac{v^T H_f(x_M)v}{\|v\|^2} -d\|H_f(x) - H_f(x_M)\|_{\max}\\
&\geq 2 \lambda_0^2 \Lambda_{\min}-d\|H_f(x) - H_f(x_M)\|_{\max}\\
\end{align*}
By equation \eqref{eq::H::F}, 
\begin{align*}
\|H_f(x) - H_f(x_M)\|_{\max} 
&\leq 2\|G_{\Psi}(x)^T \Lambda G_{\Psi}(x)-G_{\Psi}(x_M)^T \Lambda G_{\Psi}(x_M)\|_{\max} \\
&+ 2\|\Psi(x)\Lambda H_{\Psi}(x) -\Psi(x_M)\Lambda H_{\Psi}(x_M)\|_{\max}\\
&\leq 4\|\Psi\|^*_{\infty, 1} \Lambda_{\max} \|\Psi\|^*_{\infty, 2}\|x-x_M\| 
+ 4\|\Psi\|^*_{\infty,2} \Lambda_{\max} \|\Psi\|^*_{\infty, 3}\|x-x_M\| \\
&\leq 8 \|\Psi\|^*_{\infty,2} \Lambda_{\max} \|\Psi\|^*_{\infty, 3}\|x-x_M\|.
\end{align*}
Thus, as long as 
$$
\|x-x_M\| = d(x,M) \leq \frac{\lambda_0^2 \Lambda_{\min}}{8d \Lambda_{\max}\|\Psi\|^*_{\infty,2}  \|\Psi\|^*_{\infty, 3}},
$$
we have 
$$
\lambda_{\min, \perp, M}(H_f(x)) \geq  \lambda_0^2 \Lambda_{\min},
$$
which completes the proof.

\end{proof}

\begin{proof}[Proof of Theorem~\ref{thm::GD}]


{\bf 1. Convergence radius.}
We prove this by showing that for any $x\in M\oplus \delta_c$  and $x\notin M$, the destination $\pi_x(\infty)\in M$. 
The idea of the proof relies on two properties:
\begin{itemize}
\item[(P1)] Any stationary point of $f$ inside $M\oplus \delta_c$ must be a point in $M$. 

\item[(P2)]
Let $x_M$ be a point on $M$ that is closest to $x$.
For any point $x\in M\oplus \delta_c$, $(x-x_M)^T \nabla f(x)>0$. 
Namely, the gradient flow only moves $\pi_x(t)$ closer toward $M$. 
\end{itemize}
With the above two properties, it is easy to see that if we start a gradient flow $\pi_x$ from $x\in M\oplus \delta_c$, 
then by (P2) this flow must stays within $M\oplus \delta_c$. 
Because stationary points within $M\oplus \delta_c$ are all in $M$ by (P1)
and the destination of a gradient flow must be a stationary point,
we conclude that $\pi_x(\infty)\in M$, which completes the proof of convergence radius. 
In what follows, we show the two properties.

{\bf Property P1: Any stationary point inside $M\oplus \delta_c$ must be a point in $M$.}
Because $\nabla f(x) = \Psi(x)\Lambda G_{\Psi}(x)$ and $\Lambda$ is positive definite,
there are only two cases that $\nabla f(x) = 0$: 1. $\Psi(x)= 0$ and 2., row space of $G_{\Psi}(x)$
has a dimension less than $s$ (in fact, if $\Psi(x)\neq 0$, then the second case is a necessary condition). 
The first case is the solution manifold $M$
so we only need to focus on showing that 
the second case will not happen for $x\in M\oplus \delta_c$.

The row space of $G_{\Psi}(x)$ has a dimension less than $s$
when there exists a singular value of $G_{\Psi}(x)$ being $0$;
or equivalently, $\lambda_{\min}(G_{\Psi}(x)G_{\Psi}(x)^T) = 0$.
However, assumption (F) already requires that this will not happen within $M\oplus  \delta_c$.
Thus, this property holds.

%

%

{\bf Property P2: For any $x\in M\oplus \delta_c$, the directional gradient $(x-x_M)^T \nabla f(x)>0$.}
By Taylor expansion and property 2 of Lemma~\ref{lem::property2}, 
\begin{equation}
\begin{aligned}
(x-x_M)^T \nabla f(x) &= (x-x_M)^T (\nabla f(x) - \underbrace{\nabla f(x_M)}_{=0})\\
& = (x-x_M)^T\int_{\epsilon=0}^{\epsilon=1} H_f(x_M +\epsilon(x-x_M)) (x-x_M) d\epsilon\\
&\geq \|x-x_M\|^2 \inf_{y\in M\oplus \delta_c}\lambda_{\min,\perp, M}(H_f(y))\\
&\geq d(x,M)^2 \lambda_0^2 \Lambda_{\min}>0
\end{aligned}
\label{eq::GD::reverse}
\end{equation}


{\bf 2. Terminal flow orientation.}
To study the gradient flow close to $M$, it suffices to analyze the behavior of gradient close to $M$.
Let $x\in M$ and define $u $ to be a unit vector in the normal space of $M$ at $x$. 
By Lemma~\ref{lem::normal}, 
$u$ belongs to the row space of $\nabla \Psi(x)=G_{\Psi}(x)$. 

Now we consider the gradient at $x+\epsilon u$ when $\epsilon\rightarrow0$. 
By Taylor's theorem and the fact that $f$ has bounded third derivatives (from (D-3)), 
$$
G_f(x+\epsilon u) \equiv \nabla f(x+\epsilon u) = \nabla f(x+\epsilon u)-\nabla f(x) = \epsilon H_f(x) u + O(\epsilon^2).
$$
Thus, 
$$
\lim_{\epsilon\rightarrow0} \frac{1}{\epsilon}G_f(x+\epsilon u) = H_f(x) u.
$$
By equation \eqref{eq::H::F},
$$
H_f(x) = 2G_\Psi(x)^T \Lambda G_\Psi(x) + 2\Psi(x)\Lambda H_\Psi(x) = 2G_\Psi(x)^T \Lambda G_\Psi(x)
$$
because $\Psi(x) = 0$ when $x\in M$.
Using the fact that $G_\Psi(x)^T = [\nabla \Psi_1(x),\cdots, \nabla\Psi_s(x)]$, 
it is easy to see that
$$
H_f(x) u = \sum_{\ell=1}^s a_\ell \nabla \Psi_\ell(x),
$$
where $a_\ell  = e_\ell^T \Lambda G_\Psi(x)u$ with $e_\ell = (0,0,\cdots,0,1,0,\cdots, 0)^T \in \R^s$ is the coordinate vector pointing toward $\ell$-th coordinate. 
Thus, by Lemma~\ref{lem::normal} $\nabla \nabla f(x) u$ belongs to the normal space of $M$ at $x$,
which completes the proof of terminal orientation.

\end{proof}

\begin{proof}[Proof of Theorem~\ref{thm::AMT}]

We prove this result using the idea of the Lyapunov-Perron method \cite{perko2013differential}.
Recall that $A(z) = \{x: \pi_x(\infty) = z\}$ for $z\in M$
is the basin of attraction of point $z$. 
Consider a ball $B(z, r)$ such that any gradient flow $\pi_x(t)$ that converges to $z = \pi_x(\infty)$ 
intersects one and only one point at the boundary $\partial B(z,r) = \{y: \|y-z\| = r\}$.
This occurs when $r<\delta_c$ 
due to property (P2) in the proof of Theorem~\ref{thm::GD}.

Consider the gradient flow $\pi_x(t)$ with $x\in \partial B(z,r)$ and $\pi_x(\infty) = z$.
By Taylor's theorem,
this flow solves the following equation
\begin{equation}
\begin{aligned}
\pi'_x(t) = -G_f(\pi_x(t))  &= -G_f(\pi_x(t))  + \underbrace{G_f(\pi_x(\infty)) }_{=0}\\
& = -H_f(\pi_x(\infty)) (\pi_x(t) - \pi_x(\infty)) + \epsilon(\pi_x(t)),
\end{aligned}
\label{eq::AMTpf1}
\end{equation}
where $\|\epsilon(\pi_x(t))\|\leq C_0 \|\pi_x(t) - \pi_x(\infty)\| \leq C_0 r$ for some finite constant $C_0$
due to Assumption (D-3).
Equation \eqref{eq::AMTpf1} is a perturbed ODE with a fixed point $\pi_x(\infty)$ and by the variation of parameters,
its solution can be written as 
$$
\pi_x(t) - \pi_x(\infty) = e^{- t H_f(\pi_x(\infty))} (\pi_x(0) - \pi_x(\infty)) + \int_{s=0}^{s=t} e^{- (t-s) H_f(\pi_x(\infty))} \epsilon(\pi_x(s))ds.
$$
Denoting $v_x = \pi_x(0) - \pi_x(\infty) $,
we can rewrite the flow as 
$$
\pi_x(t) -\pi_x(\infty) =e^{- t H_f(\pi_x(\infty))} v_x + \int_{s=0}^{s=t} e^{- (t-s)H_f(\pi_x(\infty))} \epsilon(\pi_x(s))ds.
$$
By Lemma~\ref{lem::normal}, the normal space of $M$ at $z = \pi_x(\infty)$ is the row space of $G_{\Psi}(z) = \nabla\Psi(z)$,
which will also be the space spanned by the eigenvectors of $H_f(\pi_x(\infty))$ 
that corresponds to non-zero eigenvalues (Lemma~\ref{lem::property2} 1-(a)). 
The spectral decomposition shows
$H_f(\pi_x(\infty)) = \sum_{\ell=1}^s\lambda_\ell u_\ell u_\ell^T$
and we define 
the projection matrix onto the normal space of  $M$ as $\Pi_N = \sum_{\ell=1}^s u_\ell u_\ell^T$
and the projection matrix onto the tangent space of $M$
as $\Pi_T = I_d - \Pi_N$.
By construction, $\Pi_N H_f(\pi_x(\infty)) = H_f(\pi_x(\infty))$
and $\Pi_T H_f(\pi_x(\infty)) = 0$
so $\Pi_Ne^{-t H_f(\pi_x(\infty))} = e^{-t H_f(\pi_x(\infty))}$
and $\Pi_T e^{-t H_f(\pi_x(\infty))} = \Pi_T$.

We decompose 
\begin{equation}
\begin{aligned}
\pi_x(t) - \pi_x(\infty) &= \Pi_T(\pi_x(t) - \pi_x(\infty)) + \Pi_N (\pi_x(t) - \pi_x(\infty))\\
&= \Pi_Te^{- t H_f(\pi_x(\infty))} v_x + \Pi_T \int_{s=0}^{s=t} e^{- (t-s)H_f(\pi_x(\infty))} \epsilon(\pi_x(s))ds\\
&\quad + \Pi_Ne^{- t H_f(\pi_x(\infty))} v_x + \Pi_N \int_{s=0}^{s=t} e^{- (t-s)H_f(\pi_x(\infty))} \epsilon(\pi_x(s))ds\\
& = v_{x,T} + \int_{s=0}^{s=t}  \epsilon_T(\pi_x(s))ds
+e^{- t H_f(\pi_x(\infty))} v_{x,N} \\
&\quad + \int_{s=0}^{s=t} e^{- (t-s)H_f(\pi_x(\infty))} \epsilon_N(\pi_x(s))ds,
\end{aligned}
\label{eq::AMTpf2}
\end{equation}
where 
\begin{align*}
v_{x,T} = \Pi_T v_x, \quad v_{x,N} = \Pi_N v_x,
\quad \epsilon_T(\pi_x(s)) = \Pi_T  \epsilon(\pi_x(s)),
\quad\epsilon_N(\pi_x(s)) = \Pi_N  \epsilon(\pi_x(s)).
\end{align*}

In the tangent direction, when $t\rightarrow \infty$
\begin{align*}
0 &=\lim_{t\rightarrow\infty} \Pi_T(\pi_x(t) - \pi_x(\infty)) \\
&= \lim_{t\rightarrow\infty} \Pi_T e^{- t H_f(\pi_x(\infty))}v_x +\lim_{t\rightarrow\infty}  \Pi_T\int_{s=0}^{s=t} e^{- (t-s) H_f(\pi_x(\infty))} \epsilon(\pi_x(s))ds\\
& = \Pi_T v_x + \int_{s=0}^{s=\infty} \Pi_T\epsilon(\pi_x(s))ds\\
& = v_{x,T} + \int_{s=0}^{s=\infty} \epsilon_T(\pi_x(s))ds.
\end{align*}
Thus, 
\begin{equation}
v_{x,T} = - \int_{s=0}^{s=\infty} \epsilon_T(\pi_x(s))ds
\label{eq::AMTpf3}
\end{equation}
and equation \eqref{eq::AMTpf2} can be rewritten as 
\begin{equation}
\begin{aligned}
\pi_x(t) - \pi_x(\infty) &= - \int_{s=0}^{s=\infty} \epsilon_T(\pi_x(s))ds + \int_{s=0}^{s=t}  \epsilon_T(\pi_x(s))ds\\
&\quad+e^{- t H_f(\pi_x(\infty))} v_{x,N} 
+ \int_{s=0}^{s=t} e^{- (t-s)H_f(\pi_x(\infty))} \epsilon_N(\pi_x(s))ds\\
& = e^{- t H_f(\pi_x(\infty))} v_{x,N} 
+ \int_{s=0}^{s=t} e^{- (t-s)H_f(\pi_x(\infty))} \epsilon_N(\pi_x(s))ds\\
&\qquad -\int_{s=t}^{s=\infty}  \epsilon_T(\pi_x(s))ds.
\end{aligned}
\label{eq::AMTpf4}
\end{equation}
The latter two terms involving integral are determined entirely by the 
Taylor remainder terms $\epsilon(\pi_x(t))$. 
Thus, to uniquely determine a point on the gradient flow $\pi_x(t)$
that converges to $z$ (and is inside $B(z,r)$),
we only need to specify 
the time $t$ and the vector $v_{x,N} $ that belongs to the normal space of $M$ at $z$
with $\|v_{x,N} \| = r$. 
Namely,  there exists a mapping (due to equation \eqref{eq::AMTpf4}) $\Omega$ such that
$$
\pi_x(t) = \Omega(t, v_{x,N})
$$
for all $\pi_x(t)$ with $\|x-z\| = r$. 
Note that equation \eqref{eq::AMTpf4} implies that the mapping $\Omega$ has bounded derivative with respect to both $t$ and $v_{x,N}$.
Therefore, 
the set
$$
A(z)\cap B(z,r) = \left\{\pi_x(t) = \Omega(t,v_{x,N}): t\in [0,\infty), v_{x,N} = \sum_{\ell=1}^s a_\ell u_\ell, \sum_{\ell=1}^s a_\ell^2 = r^2 \right\}
$$
is parameterized by $(t,a_1,\cdots, a_s)$ with a constraint $\sum_{\ell=1}^s a_\ell^2 = r^2$
so it is an $s$-dimensional manifold. 

To generalize this to the entire set $A(z)$,
note that every gradient flow ending at $z$ must pass the boundary $\partial B(z,r)$
so allowing the gradient $\pi_x(t)$ to move toward $t\rightarrow-\infty$
covers the entire basin, i.e.,
$$
A(z) = \left\{\pi_x(t) = \Omega(t,v_{x,N}): t\in \R, v_{x,N} = \sum_{\ell=1}^s a_\ell u_\ell, \sum_{\ell=1}^s a_\ell^2 = r^2 \right\}.
$$
This implies that $A(z)$ is parametrized by $(t,a_1,\cdots, a_s)$ with a constraint $\sum_{\ell=1}^s a_\ell^2 = r^2$
so again it is an $s$-dimensional manifold.



\end{proof}

\begin{proof}[Proof of Theorem~\ref{thm::alg}]



{\bf Convergence of $f(x_t)$.}
Because $x_{t+1} = x_t - \gamma G_f(x_t)$, simple Taylor expansion shows that
\begin{align*}
f(x_t)- f(x_{t+1}) &= f(x_t) - f(x_t -\gamma G_f(x_t))\\
&= \gamma \|G_f(x_t)\|^2 - \frac{1}{2}\gamma^2 \int_{\epsilon=0}^{\epsilon=1} G_f(x_t) H_f(x_t-\epsilon\gamma G_f(x_t)) G_f(x_t) d\epsilon\\
&\geq \gamma \|G_f(x_t)\|^2 -\frac{1}{2} \gamma^2 \|G_f(x_t)\|^2\sup_z\|H_f(z)\|_{2}.
\end{align*}
Note that one can also use the fact that the gradient $G_f$ is Lipschitz to obtain a similar bound.
%

Thus, when $\gamma < \frac{2}{\sup_z\|H_f(z)\|_{2}}$, 
we obtain
$$
f(x_t)- f(x_{t+1}) >0
$$
which implies that $f(x_{t+1})< f(x_t)$, i.e.,
the objective function is decreasing. 
We can summarize the result as 
\begin{equation}
f(x_{t+1}) \leq f(x_t) - \gamma \|G_f(x_t)\|^2\left(1 -\frac{1}{2} \gamma \sup_z\|H_f(z)\|_{2}\right).
\label{eq::alg::pf1}
\end{equation}

To obtain the algorithmic convergence rate, we need to associate the objective function $f(x)$ and the squared gradient $\|G_f(x)\|^2$. 
We focus on the case of $t=0$, and investigate 
\begin{equation}
f(x_{1}) \leq f(x_0) - \gamma \|G_f(x_0)\|^2\left(1 -\frac{1}{2} \gamma \sup_z\|H_f(z)\|_{2}\right).
\label{eq::alg::pf2}
\end{equation}

Because $d(x_0,M)\leq \delta_c\leq {\sf reach}(M)$, there is a unique projection $x_M\in M$ from $x_0$.
Note that $d(x_0,M) = \|x_0-x_M\|$.
The gradient has a lower bound from the following Taylor expansion:
\begin{align*}
\|G_f(x_0)\| &= \|G_f(x_0)- \underbrace{G_f(x_M)}_{=0}\|\\
& =\left\| \int_{\epsilon=0}^{\epsilon=1} H_f(x_M + \epsilon(x_0-x_M)) (x_0-x_M)d\epsilon\right\|\\
&\geq \|x_0-x_M\| \inf_{\epsilon\in [0,1]}\lambda_{\min,\perp}(H_f(x_M + \epsilon(x_0-x_M)))\\
&\geq \|x_0-x_M\| \lambda^2_0 \Lambda_{\min}\\
&\geq d(x_0,M) \lambda^2_0 \Lambda_{\min},
\end{align*}
where the second to the last inequality is due to property 2 in  Lemma~\ref{lem::property2}.
Thus, 
\begin{equation}
\|G_f(x_0)\|^2 \geq d(x_0,M)^2 \lambda^4_0 \Lambda^2_{\min}.
\label{eq::alg::pf3}
\end{equation}

The distance $d(x_0,M)$ and the objective function $f(x_0)$
can also be associated using another Taylor expansion:
\begin{align*}
f(x_0) & = f(x_0) - f(x_M)\\
& = (x_0-x_M)^T \underbrace{G_f(x_M)}_{=0} +\frac{1}{2} (x_0-x_M)^T \int_{\epsilon=0}^{\epsilon=1} H_f(x_M+\epsilon (x_0-x_M)) d\epsilon (x_0-x_M)\\
&\leq \frac{1}{2}d^2(x_0,M) \sup_z\|H_f(z)\|_{2}.
\end{align*}
Thus, 
\begin{equation*}
d^2(x_0,M) \geq \frac{2f(x_0)}{\sup_z\|H_f(z)\|_{2}}
\end{equation*}
which implies an improved bound on equation \eqref{eq::alg::pf3} as 
\begin{equation}
\|G_f(x_0)\|^2 \geq d(x_0,M)^2\lambda^4_0 \Lambda^2_{\min} \geq \frac{ \lambda^4_0 \Lambda^2_{\min}}{\sup_z\|H_f(z)\|_{2}} 2f(x_0).
\label{eq::alg::pf4}
\end{equation}

By inserting equation \eqref{eq::alg::pf4} into equation \eqref{eq::alg::pf2},
we obtain
\begin{align*}
f(x_{1}) &\leq f(x_0) - \gamma \|G_f(x_0)\|^2\left(1 -\frac{1}{2} \gamma \sup_z\|H_f(z)\|_{2}\right)\\
&\leq f(x_0) - \gamma\left(1 -\frac{1}{2} \gamma \sup_z\|H_f(z)\|_{2}\right) \frac{ \lambda^4_0 \Lambda^2_{\min}}{\sup_z\|H_f(z)\|_{2}} 2f(x_0)\\
& = f(x_0)\left(1-2\gamma\left(1 -\frac{1}{2} \gamma \sup_z\|H_f(z)\|_{2}\right) \frac{\lambda^4_0 \Lambda^2_{\min}}{\sup_z\|H_f(z)\|_{2}} \right)
\end{align*}
When $\gamma< \frac{1}{\sup_z\|H_f(z)\|_{2}}$, the above inequality can be simplified as
$$
f(x_{1}) \leq f(x_0) \cdot {\left(1-\gamma \frac{\lambda^4_0 \Lambda^2_{\min}}{\sup_z\|H_f(z)\|_{2}} \right)}.
$$
Thus, we have proved the result for $t=0$. 
The same derivation works for other $t$ (by treating $x_t$ as $x_0$).
By telescoping, we conclude that
$$
f(x_t) \leq f(x_0) \cdot \left(1-\gamma \frac{\lambda^4_0 \Lambda^2_{\min}}{\sup_z\|H_f(z)\|_{2}} \right)^t.
$$
Finally, using the fact  that $\sup_z\|H_f(z)\|_{2}\leq \Lambda_{\max} \|\Psi\|^*_{\infty,2}$, we obtain the desired bound.



{\bf Convergence of $d(x_t,M)$.}
Let $x_{t,M} \in M$ be the point on the manifold that is closest to $x_t$; again, due to the reach condition
this projection is unique.
The Taylor expansion along with property 2 in Lemma~\ref{lem::property2} shows that 
\begin{equation*}
\begin{aligned}
-f(x_t) &= f(x_{t,M}) - f(x_t) \\
&= (x_{t,M}-x_{t})^T G_f(x_t) + \frac{1}{2}(x_{t,M}-x_{t})^T \int_{\epsilon=0}^{\epsilon=1} H_f(x_t+\epsilon(x_{t,M}-x_t)) (x_t-x_{t,M})d\epsilon\\
&\geq (x_{t,M}-x_{t})^T G_f(x_t)  + \frac{1}{2}\|x_t - x_{t,M}\|^2 \lambda_0^2 \Lambda_{\min}. 
\end{aligned}
\end{equation*}
Thus, 
\begin{equation}
-f(x_t) -  \frac{1}{2}\|x_{t,M} - x_{t}\|^2 \lambda_0^2 \Lambda_{\min}  \geq - (x_{t}-x_{t,M})^T G_f(x_t). 
\label{eq::LC::01}
\end{equation}
Because of equation \eqref{eq::alg::pf1} and that $\sup_z\|H_f(z)\|_{2}\leq d\|\Psi\|^*_{\infty,2}$, 
we have 
\begin{align*}
f\left(x - \frac{1}{d\Lambda_{\max} \|\Psi\|^*_{\infty,2}} G_f(x)\right)-f(x)&\leq -\frac{1}{2d\Lambda_{\max} \|\Psi\|^*_{\infty,2}}\|G_f(x)\|^2.
\end{align*}
Using the fact that $f\left(x - \frac{1}{d\Lambda_{\max} \|\Psi\|^*_{\infty,2}} G_f(x)\right)\geq 0$, 
we conclude that
\begin{equation*}
\frac{1}{2d\Lambda_{\max} \|\Psi\|^*_{\infty,2}}\|G_f(x)\|^2 \leq  f(x),
\end{equation*}
which implies
\begin{equation}
\|G_f(x)\|^2 \leq  2d\Lambda_{\max} \|\Psi\|^*_{\infty,2}f(x).
\label{eq::LC::02}
\end{equation}


For any $t$, we have
\begin{align*}
d(x_{t+1},M)^2 &\leq 
\|x_{t+1} - x_{t,M}\|^2 \\
&= \|x_{t} - x_{t,M}-\gamma G_f(x_t)\|^2\\
& = \|x_{t}- x_{t,M}\|^2 - 2(x_t-x_{t,M})^TG_f(x_t) + \gamma^2\|G_f(x_t)\|^2\\
&\overset{\eqref{eq::LC::01}}{\leq}\|x_{t}- x_{t,M}\|^2 (1-\gamma \lambda_0^2 \Lambda_{\min}) -2\gamma f(x_t) + \gamma^2 \|G_f(x_t)\|^2\\
& \overset{\eqref{eq::LC::02}}{\leq}\|x_{t}- x_{t,M}\|^2 (1-\gamma \lambda_0^2 \Lambda_{\min}) - \underbrace{2\gamma f(x_t) \left(1-d\gamma \Lambda_{\max} \|\Psi\|^*_{\infty,2}\right)}_{\geq 0}\\
&\leq \|x_{t}- x_{t,M}\|^2 (1-\gamma \lambda_0^2 \Lambda_{\min})\\
&= d(x_t,M)^2(1-\gamma \lambda_0^2 \Lambda_{\min})
\end{align*}
whenever $\gamma<\frac{1}{\Lambda_{\max} \|\Psi\|^*_{\infty,2}}$.
By telescoping, the result follows.


\end{proof}



\bibliographystyle{imsart-number.bst}
\bibliography{SolM_GD.bib}


\end{document}